\newtheorem{thrm}{Theorem}[section]
\newtheorem{lem}[thrm]{Lemma}
\newtheorem{cor}[thrm]{Corollary}
\newtheorem{notation}[thrm]{Notation}
\newtheorem{problem}[thrm]{Problem}
\theoremstyle{definition}
\newtheorem{definition}[thrm]{Definition}
\newtheorem{example}{Example}[section]
\theoremstyle{remark}
\numberwithin{equation}{section}
\def\Z{{\mathbb Z}}
\def\la{{\langle}}
\def\ra{{\rangle}}
\def\tl{{\unlhd}}
\def\fix{{\rm fix}}
\def\AGL{{\rm AGL}}
\def\Aut{{\rm Aut}}
\def\Stab{{\rm Stab}}
\def\cal{\mathcal}
\def\PStab{{\rm PStab}}
\def\C{{\rm Z}}
\def\clo{{\rm clo}}
\def\WStab{{\rm WStab}}
\def\Cay{{\rm Cay}}
\def\Cos{{\rm Cos}}
\begin{document}
\title[]{New classes of groups related to algebraic combinatorics with applications to isomorphism problems}

\author{Ted Dobson}
\address{University of Primorska, UP IAM, Muzejski trg 2, SI-6000 Koper, Slovenia and
\\University of Primorska, UP FAMNIT, Glagolja\v{s}ka 8, SI-6000 Koper, Slovenia}
\email{ted.dobson@upr.si}

\begin{abstract}
\smallskip
We introduce two refinements of the class of $5/2$-groups, inspired by the classes of automorphism groups of configurations and automorphism groups of unit circulant digraphs.  We show that both of these classes have the property that any two regular cyclic subgroups of a group $G$ in either of these classes are conjugate in $G$.  This generalizes two results in the literature (and simplifies their proofs) that show that symmetric configurations and unit circulant digraphs are isomorphic if and only if they are isomorphic by a group automorphism of $\Z_n$.

\noindent\textbf{Keywords:} $5/2$-closed, unit circulant graph, configuration, CI-group, Cayley isomorphism
\end{abstract}
\subjclass[2020]{Primary 05E18, Secondary 05E30}

\maketitle

In \cite{Dobson2023apreprint}, a class of groups called $5/2$-closed groups was defined which properly contains all automorphism groups of vertex-transitive digraphs.  A sufficient condition for the quotient of a $5/2$-closed group to be $5/2$-closed was given, and this was used to determine all Sylow $p$-subgroups of $5/2$-closed groups of odd prime-power degree that contain a regular cyclic subgroup.  Together with work in \cite{Dobson2023epreprint}, this gives the automorphism group of all circulant digraphs of odd prime-power order (Cayley digraphs of $\Z_{p^k}$, $p$ odd and $k\ge 1$).

In this paper, we continue this program and introduce two additional families of groups.  We will call these families $3/2$-closed and $9/8$-closed groups.  One should think of $9/8$-closed groups as being the analogues of the automorphism groups of configurations, in the way that $5/2$-closed groups were inspired by the automorphism groups of vertex-transitive digraphs.  Continuing the analogies, $3/2$-closed groups are thought of as analogues of the automorphism groups of unit circulant digraphs.  We go ahead and define an intermediary class of groups, $5/4$-closed groups, but at this time it is not clear if they will be of general interest.

We will show that for $n\ge 1$, any two regular cyclic subgroups in either a $3/2$-closed group $G$ (Theorem \ref{Toida generalization}) or a $9/8$-closed group $G$ (Theorem \ref{configuration result}) are conjugate in $G$.  These results generalize several known results.  First, Toida conjectured that every unit circulant digraph is a CI-digraph.  This conjecture was independently verified to be true by two groups of authors, the author and Joy Morris \cite{DobsonM2002} using permutation group techniques, as well as by Muzychuk, Klin and P\"oschel \cite{MuzychukKP2001} using the method of Schur.  The more general result proven here has a shorter proof.  Koike, Kov\'acs, Maru\v si\v c, and Muzychuk showed that $\Z_n$ is a CI-group with respect to balanced configurations \cite{KoikeKMM2019}.  We prove this in a much more general situation, as well as with a shorter proof.

Additionally, in Theorem \ref{compute G^5/2} we will also give a method of construction of the $5/2$-closure of any transitive permutation group $G$.  This construction illustrates that the $5/2$-closure is determined internally (i.e. all of the information one needs to construct it is already contained in the group).  The $2$-closure, by contrast, can be thought of as ``external".  To construct it, one calculates the orbital digraphs of $G$, then calculates the automorphism group of each orbital digraphs, and then the $2$-closure is the intersection of all such automorphism groups.  Calculating the automorphism groups of the orbital digraphs is generally difficult, and the relationship between the automorphism group and the group $G$ may not be clear.

Finally, there are in the author's view several other interesting results in this paper.  In Theorem \ref{9/8 object} we show an incidence structure where each pair of points is on at most one line and each line contains at least three points (a partial Sylvester-Gallai design) has $9/8$-closed automorphism group (as well as for such objects that have been ``colored" and ``directed"), and that every connected vertex-transitive digraph of girth at least $5$ also has $9/8$-closed automorphism group (Theorem \ref{graph 9/8 closed}).  In Theorem \ref{3/2 structure} we give the relationship between the classes of $9/8$-closed groups and $5/2$-closed groups.

\section{$5/2$-closed groups}

In this section we summarize $5/2$-closed groups.  A reader who is familiar with \cite{Dobson2023apreprint} will not see anything new in this section.  We will need some basic ideas and notation regarding permutation groups before proceeding.

\begin{definition}\label{imprimitive}
Let $X$ be a set and $G\le S_X$ be transitive.  A subset $B\subseteq X$ is a {\bf block} of $G$ if whenever $g\in G$, then $g(B)\cap B = \emptyset$ or $B$.  If $B = \{x\}$ for some $x\in X$ or $B = X$, then $B$ is a {\bf trivial block}.  Any other block is nontrivial.  Note that if $B$ is a block of $G$, then $g(B)$ is also a block of $B$ for every $g\in G$, and is called a {\bf conjugate block of $B$}.  The set of all blocks conjugate to $B$ is a partition of $X$, called a {\bf block system of $G$}.
\end{definition}

\begin{definition}
Let $X$ be a set and $G\le S_X$ be transitive.  If $N\tl G$, then the set of orbits of $N$ is a block system ${\cal B}$ of $G$.  We say ${\cal B}$ is a {\bf normal block system} of $G$.
\end{definition}

\begin{definition}
Let $G\le S_n$ be transitive with a block system ${\cal B}$.  By $\fix_G({\cal B})$ we denote the subgroup of $G$ that fixes each
block of ${\cal B}$ set-wise.  That is, $\fix_G({\cal B}) = \{g\in G:g(B) = B{\rm\ for\ all\ }B\in{\cal B}\}$.  If ${\cal C}$ is another block system of $G$ and each block of ${\cal B}$ is (properly) contained within a block of ${\cal C}$, we write (${\cal B}\prec{\cal C}$) ${\cal B}\preceq{\cal C}$, and say ${\cal B}$ {\bf refines} ${\cal C}$.  We denote the induced action of $G$ on the block system ${\cal B}$ by $G/{\cal B}$, and the action of an element $g\in G$ on ${\cal B}$ by $g/{\cal B}$.  That is, $g/{\cal B}(B) = B'$ if and only if $g(B) = B'$, and $G/{\cal B} = \{g/{\cal B}:g\in G\}$.  If ${\cal B}\preceq{\cal C}$, then $G/{\cal B}$ has a block system ${\cal C}/{\cal B}$ whose blocks are the set of blocks of ${\cal B}$ whose union are blocks of ${\cal C}$.
\end{definition}

It is easy to see that $\fix_G({\cal B})$ is a normal subgroup of $G$.

\begin{definition}
Let $G\le S_n$ with orbit ${\cal O}$, and $g\in G$. Then $g$ induces a permutation on ${\cal O}$ by restricting the domain of $g$ to ${\cal O}$.  We denote the resulting permutation in $S_{\cal O}$ by $g^{\cal O}$.  The group $G^{\cal O} = \{g^{\cal O}:g\in G\}$ is the  {\bf transitive constituent} of $G$ on ${\cal O}$.
\end{definition}

We now give the necessary definitions to define $5/2$-closed groups.

\begin{definition}
It was shown in \cite[Lemma 1.7]{Dobson2023apreprint} that for $G\le S_n$ transitive with a normal block system ${\cal B}$, for each $B\in{\cal B}$ there is a maximal subgroup of $\fix_G({\cal B})$, denoted $\WStab_G(B)$ (the {\bf wreath} stabilizer of $B$ in $G$), such that $\WStab_G(B)^B = 1$ and for every other block $B'\in{\cal B}$, $\WStab_G(B)^{B'}$ is either transitive or trivial.
\end{definition}

It was also shown in \cite[Lemma 1.9]{Dobson2023apreprint} that $\WStab_G(B)$ behaves exactly like a ``stabilizer" in that the conjugate of a wreath stabilizer of $B$ by $g\in G$ is the wreath stabilizer of $g(B)$ in $G$

\begin{definition}\label{automorphismtooldef}
Let $G$ be a transitive group that has a normal block system ${\cal B}$.  Define a relation $\equiv$ on ${\cal B}$ by $B\equiv B'$ if and only if $\WStab_G(B) = \WStab_G(B')$.
\end{definition}

It was shown in \cite[Lemma 1.11]{Dobson2023apreprint} that $\equiv$ is an equivalence relation on ${\cal B}$, and that the set of unions of equivalence classes of $\equiv$ is a block system of $G$ which is refined by ${\cal B}$.   Note that $B\not\equiv B'$ means that $\WStab_G(B)^{B'}$ is transitive.

\begin{definition}
We call $\equiv$ the {\bf ${\cal B}$-restricting equivalence relation of $G$}, and ${\cal E}$ the {\bf ${\cal B}$-fixer block system of $G$}.
\end{definition}

\begin{notation}
Let $g\in S_n$ and $X\subseteq\Z_n$ (we think of $S_n$ as permuting the elements of $\Z_n$) such that $g(X) = X$.  By $g\vert_X$ we mean the element of $S_n$ such that $g\vert_X(y) = g(y)$ if $y\in X$, while $g\vert_X(y) = y$ if $y\notin X$.
\end{notation}

\begin{definition}
Let $G\le S_n$ be transitive.  For $H\le G$  transitive with normal block system ${\cal B}_H$ let ${\cal E}_{H,{\cal B}}$ be the ${\cal B}_H$-fixer block system of $H$. Suppose that for every transitive subgroup $H\le G$, every normal block system ${\cal B}_H$ of $H$, and every $g\in G$ that fixes each block of ${\cal B}_H$ contained in $E\in{\cal E}_{H,{\cal B}}$ setwise, we have $g\vert_E\in G$.  We will then say that $G$ is {\bf $5/2$-closed}. For a group $G$, the {\bf $5/2$-closure of $G$}, denoted $G^{(5/2)}$, is the intersection of all $5/2$-closed groups which contain $G$.
\end{definition}

The set of $5/2$-closed groups was introduced in \cite{Dobson2023apreprint}, where their elementary properties were studied.  In particular it was shown that the automorphism groups of vertex-transitive digraphs are $5/2$-closed \cite[Theorem 3.6]{Dobson2023apreprint}, and a sufficient condition for the quotient of a $5/2$-closed group to be $5/2$-closed was given in \cite[Theorem 2.3]{Dobson2023apreprint}.  We next continue the development of $5/2$-closed groups by giving an explicit method for constructing the $5/2$-closure of a transitive permutation group, which we will need later.

\section{Constructing $G^{(5/2)}$}

Automorphism groups of digraphs, and consequently $2$-closed groups, can contain ``unexpected" automorphisms, and consequently can be difficult to construct.  We next show that, at least intuitively, $5/2$-closed groups are much easier to construct.  We also show that like $2$-closed groups (see \cite[Theorem 4.11]{Wielandt1969}), the blocks of the $5/2$-closure of a transitive group $G$ are the same as the group $G$.  We begin with some preliminary results.

\begin{lem}\label{blocks with restrictions}
Let $G\le S_n$ be transitive with normal block systems ${\cal B}$ and ${\cal C}$, with ${\cal E}$ the ${\cal C}$-fixer block system of $G$.  Then either ${\cal B}\preceq{\cal E}$ or ${\cal C}\prec{\cal B}$.
\end{lem}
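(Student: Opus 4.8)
The plan is to argue by contradiction: suppose ${\cal B}\not\preceq{\cal E}$ and ${\cal C}\not\prec{\cal B}$, and derive a contradiction. The first observation is that block systems of a transitive group are partially ordered by refinement, but two block systems need not be comparable. However, the key structural fact is that both ${\cal B}$ and ${\cal C}$ are \emph{normal} block systems, and ${\cal E}$ is defined via wreath stabilizers relative to ${\cal C}$; I expect the definition of $\WStab_G$ and the ${\cal C}$-restricting equivalence relation to be what forces comparability in one of the two directions. Since ${\cal B}\not\preceq{\cal E}$, there is a block $B\in{\cal B}$ that is not contained in any block of ${\cal E}$; equivalently, $B$ meets at least two blocks of ${\cal C}$ that are $\equiv$-inequivalent (here $\equiv$ is the ${\cal C}$-restricting equivalence relation), because the blocks of ${\cal E}$ are exactly the unions of $\equiv$-classes of blocks of ${\cal C}$.

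**Exploiting wreath stabilizers.** First I would take two blocks $C_1, C_2\in{\cal C}$ with $C_1\cap B\ne\emptyset\ne C_2\cap B$ and $C_1\not\equiv C_2$, so that $\WStab_G(C_1)\ne\WStab_G(C_2)$; without loss of generality $\WStab_G(C_1)^{C_2}$ is transitive. Now I want to use this transitivity together with the fact that $\WStab_G(C_1)\le\fix_G({\cal C})$ and the normality of ${\cal B}$. The element(s) of $\WStab_G(C_1)$ fix $C_1$ pointwise (since $\WStab_G(C_1)^{C_1}=1$) but act transitively on $C_2$; applying such an element $g$ to the block $B$, since $g\in G$ and $B$ is a block, $g(B)\cap B\in\{\emptyset,B\}$. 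But $g$ fixes the point(s) of $B\cap C_1$, so $g(B)\cap B\ne\emptyset$, hence $g(B)=B$, i.e. $g$ stabilizes $B$ setwise. Doing this for all $g\in\WStab_G(C_1)$ shows $\WStab_G(C_1)$ stabilizes $B$; since $\WStab_G(C_1)^{C_2}$ is transitive and $C_2\cap B\ne C_2$ would then be impossible (a setwise-stabilized block can't properly meet $C_2$ while the group acts transitively on $C_2$ and fixes that block), we must have $C_2\subseteq B$. I would then want to push this to show that in fact $B$ is a union of blocks of ${\cal C}$, and moreover that every block of ${\cal C}$ is either inside $B$ or disjoint from it in a way compatible with ${\cal C}\prec{\cal B}$ — giving the second alternative.

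**The main obstacle.** The delicate point, which I expect to be the crux, is upgrading ``$B$ contains $C_2$'' to ``${\cal C}$ refines ${\cal B}$ \emph{properly} across the whole set,'' i.e. that \emph{every} block of ${\cal C}$ is contained in some block of ${\cal B}$ (not just the particular $C_2$), and that the containment is proper so that ${\cal C}\ne{\cal B}$. Properness should be easy: if ${\cal C}={\cal B}$ then ${\cal E}$ would be a union of ${\cal B}$-blocks, forcing ${\cal B}\preceq{\cal E}$, contrary to assumption. For the ``every block'' part, I would use transitivity of $G$: pick any $C\in{\cal C}$; since $G$ is transitive there is $h\in G$ with $h(C_2)=C$, and conjugating, $\WStab_G(h(C_1))=h\WStab_G(C_1)h^{-1}$ has transitive constituent on $C=h(C_2)$ while fixing $h(C_1)$ pointwise, and $h(C_1)\not\equiv C$; repeating the argument above with the block $h(B)\in{\cal B}$ in place of $B$ shows $C\subseteq h(B)$. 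Since $h(B)\in{\cal B}$, this gives $C\subseteq$ some block of ${\cal B}$ for every $C\in{\cal C}$, i.e. ${\cal C}\preceq{\cal B}$, and combined with properness, ${\cal C}\prec{\cal B}$. One subtlety to handle carefully is the case where $\WStab_G(C_1)$ might be trivial — but then $\WStab_G(C_1)^{C_2}$ could not be transitive unless $|C_2|=1$, i.e. ${\cal C}$ is the block system of singletons, in which case ${\cal E}$ is also the singletons and $B\not\preceq{\cal E}$ is impossible; so this degenerate case does not actually arise, and I would dispose of it with a one-line remark.
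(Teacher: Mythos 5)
Your argument is correct and is essentially the paper's own proof: both hinge on the facts that $\WStab_G(C_1)^{C_2}$ is transitive when $C_1\not\equiv C_2$ and that $\WStab_G(C_1)$, since it fixes a point of $B\cap C_1$ pointwise, must stabilize the block $B$ setwise (the paper phrases this step via $\WStab_G(C_x)\le\Stab_G(x)$ together with the fact that a block through $x$ is a union of $\Stab_G(x)$-orbits), after which conjugating by $G$ gives ${\cal C}\preceq{\cal B}$ and properness is immediate. The only slip is in your closing aside: if ${\cal C}$ were the partition into singletons then $\fix_G({\cal C})=1$, all wreath stabilizers coincide, and ${\cal E}=\{\Z_n\}$ rather than the singletons (so ${\cal B}\preceq{\cal E}$ holds trivially); in any case that degenerate situation indeed never arises under your standing assumptions, so nothing in the proof is affected.
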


\begin{proof}
The result is trivial if ${\cal E} = \{\Z_n\}$, so we assume ${\cal E}$ has at least two blocks.  Let $x\in\Z_n$, $B_x\in{\cal B}$, $C_x\in{\cal C}$ and $E_x\in{\cal E}$ that contain $x$.  Then $\WStab_G(C_x)^{E_x} = 1$ while $\WStab_G(C_x)^E$, has orbits that are blocks of ${\cal C}$, where $E\in{\cal E}$ with $E\not = E_x$.  If $B_x\subseteq E_x$ then ${\cal B}\preceq{\cal E}$ and we are finished.  Otherwise, there is some $b\in B_x$ that is not contained in $E_x$.  As $\WStab_G(C_x)\le\Stab_G(x)$, the orbit of $\Stab_G(x)$ that contains $b$ contains the block $C_b$ of ${\cal C}$ that contains $b$, and is not contained in $E_x$.  Also, as $B_x$ is a union of orbits of $\Stab_G(x)$ by \cite[Exercise 1.5.9]{DixonM1996}, we have $C_b\subseteq B_x$.  Let $y\in B_x\cap E_x$.  Then there exists $g\in G$ such that $g(b) = y$, and $g(C_b)\subseteq B_x$ contains $y$.  Thus, for every element $z\in B_x\cap E_x$, the block of ${\cal C}$ that contains $z$ is contained in $B_x$.  Hence ${\cal C}\preceq{\cal B}$, and, as $x\not\in C_b$, ${\cal C}\prec{\cal B}$.
\end{proof}

\begin{definition}
For $B\subseteq\Z_n$ and $G\le S_n$, the setwise stabilizer of $B$ in $G$ is $\Stab_G(B) = \{g\in G:g(B) = B\}$, and is a subgroup of $G$.
\end{definition}

\begin{lem}\label{5/2 blocks preserved}
Let $G\le S_n$ be transitive, and $H\le G$ such that $H$ is transitive with a normal block system ${\cal C}$.  Let ${\cal E}$ be the ${\cal C}$-fixing block system of $H$.  A partition ${\cal B}$ of $\Z_n$ is a block system of $G$ if and only if it is a block system of $F = \la G,\gamma\vert_E:\gamma\in\Stab_G(E){\it \ and\ fixes\ each\ block\ of\ }{\cal C}{\it \ in\ }E, E\in{\cal E}\ra$.
\end{lem}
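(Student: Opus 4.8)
The plan is to prove the two directions of the ``if and only if'' separately, with the backward direction (block systems of $F$ are block systems of $G$) being nearly immediate, and the forward direction (block systems of $G$ are block systems of $F$) being the substantive part. For the backward direction, note that $G\le F$, so any block system of $F$ is automatically $G$-invariant; hence if ${\cal B}$ is a block system of $F$ it is a block system of $G$. The real work is showing that each block system ${\cal B}$ of $G$ is preserved by every generator $\gamma\vert_E$ of $F$, where $\gamma\in\Stab_G(E)$ fixes each block of ${\cal C}$ inside $E\in{\cal E}$; since $G$ already preserves ${\cal B}$, it suffices to check that $\gamma\vert_E$ maps each $B\in{\cal B}$ to a block of ${\cal B}$.

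For this I would invoke Lemma \ref{blocks with restrictions} applied to the pair of block systems ${\cal B}$ and ${\cal C}$ of $G$ (with ${\cal E}$ the ${\cal C}$-fixer block system of $H$, which is also a block system of $G$ since $H\le G$ — more precisely I would want ${\cal E}$ to be a block system of $G$; if $H$ is merely a subgroup this needs the hypothesis that ${\cal E}$ arises as stated, so I would first note ${\cal E}$ is $G$-invariant or restrict attention accordingly). The dichotomy gives two cases. \emph{Case 1: ${\cal B}\preceq{\cal E}$.} Then every $B\in{\cal B}$ is contained in some block $E'\in{\cal E}$. If $E'=E$, then $\gamma\vert_E(B)=\gamma(B)$, which is a block of ${\cal B}$ because $\gamma\in G$. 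If $E'\ne E$, then $B$ is disjoint from $E$, so $\gamma\vert_E$ fixes $B$ pointwise and in particular setwise. Either way $\gamma\vert_E(B)\in{\cal B}$. \emph{Case 2: ${\cal C}\prec{\cal B}$.} Then each block of ${\cal B}$ is a union of blocks of ${\cal C}$. Now $\gamma\vert_E$ acts on blocks of ${\cal C}$: a block $C\in{\cal C}$ with $C\subseteq E$ is fixed setwise by $\gamma$ (by hypothesis) hence by $\gamma\vert_E$, while a block $C$ with $C\cap E=\emptyset$ is fixed pointwise by $\gamma\vert_E$. So $\gamma\vert_E$ fixes \emph{every} block of ${\cal C}$ setwise, and therefore maps any union of blocks of ${\cal C}$ to itself; in particular $\gamma\vert_E(B)=B$ for each $B\in{\cal B}$.

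The main obstacle I anticipate is the bookkeeping around which blocks of ${\cal C}$ lie inside $E$ versus meet $E$: one must check that a block $C\in{\cal C}$ either sits entirely inside the ${\cal E}$-block $E$ or is disjoint from it, which is exactly the statement that ${\cal C}$ refines ${\cal E}$ — and this is part of the setup of the ${\cal C}$-fixer block system (the ${\cal B}$-fixer block system is, by construction in Section 1, refined by the block system it is built from). With that refinement in hand, $\gamma\vert_E$ is well-defined as a permutation (it agrees with $\gamma$ on $E$ and with the identity off $E$, and $\gamma$ stabilizes $E$), and the case analysis above goes through cleanly. A secondary point to be careful about is that Lemma \ref{blocks with restrictions} is stated for block systems of a single group $G$, so I should apply it with ${\cal B}$, ${\cal C}$, ${\cal E}$ all viewed as block systems of $G$ — which requires ${\cal E}$ (defined via $H$) to be $G$-invariant; if that is not automatic I would either add it as a standing assumption consistent with how ${\cal E}$ is used elsewhere, or argue directly that the needed dichotomy still holds.
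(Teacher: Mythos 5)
Your proof is correct and follows essentially the same route as the paper: the backward direction is immediate from $G\le F$, and the forward direction uses the dichotomy of Lemma \ref{blocks with restrictions} followed by the same two-case analysis (${\cal B}\preceq{\cal E}$ handled block-by-block according to whether $B\subseteq E$ or $B\cap E=\emptyset$; ${\cal C}\prec{\cal B}$ handled by observing $\gamma\vert_E$ fixes every block of ${\cal C}$ setwise and hence every union of such blocks). The only adjustment needed is the point you flagged yourself: the dichotomy should be applied to $H$, not to $G$ --- ${\cal C}$ and ${\cal E}$ are block systems of $H$ rather than of $G$, while ${\cal B}$, being a block system of $G$, is automatically a block system of the transitive subgroup $H$; this is exactly what the paper does, and with that reading your case analysis goes through verbatim.
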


\begin{proof}
As $G\le F$, a block system of $F$ is a block system of $G$.

For the converse, we may assume that ${\cal E}\not = \{\Z_n\}$, as if that is the case, then $G = F$.  Let $\gamma\in\Stab_G(E)$ that fixes each block of ${\cal C}$ contained in $E\in{\cal E}$.  We apply Lemma \ref{blocks with restrictions} to $H$ and consider the two conclusions of this lemma separately.  Suppose ${\cal B}\preceq{\cal E}$.  Then for each block $E\in{\cal E}$, we have either $B\subseteq E$ or $B\cap E = \emptyset$.  In the former case, $(\gamma\vert_E)(B) = \gamma(B)$, while in the latter case, $\gamma\vert_E(B) = B$.  We conclude that $\gamma\vert_E$ maps blocks of ${\cal B}$ to blocks of ${\cal B}$.  If ${\cal C}\prec{\cal B}$, then $\gamma$ fixes every block of ${\cal B}$ contained in $E$, and so $(\gamma\vert_E)(B) = B$ for every $B\in{\cal B}$.  Then ${\cal B}$ is preserved by every element of a generating set of $F$, and the result follows.
\end{proof}

We now wish to fix notation for the next result, which gives a method of constructing the $5/2$-closure of a transitive group.
Let $G\le S_n$ be transitive, and $X = \{{\cal B}_i:1\le i\le t\}$ the set of all normal block systems of some transitive subgroup $H_i\le G$ (the transitive subgroup $H_i$ may vary as $i$ varies), with ${\cal B}_i$-restricting block systems ${\cal E}_i$ of $H_i$.  Let
\begin{eqnarray*}
\clo_{5/2}(G) & = & \la G, \gamma\vert_{E_i}:\gamma\in\Stab_G(E_i){\rm\ that\ fixes\ each\ block\ of\ }{\cal B}_i\\
              &   & {\rm contained\ in\ }E_i, E_i\in{\cal E}_i, 1\le i\le t\ra.
\end{eqnarray*}
For an integer $s$, set $\clo_{5/2}^s(G)$ to be $\clo_{5/2}(\clo_{5/2}(\cdots(\clo_{5/2}(G))\ldots))$ (where $\clo_{5/2}$ is applied $s$ times).  We note that each time $\clo_{5/2}$ is applied, we do not assume that $X$ or $t$ is the same.

The next result is the main result of this section, and shows how to construct $G^{(5/2)}$ from $G$, as well as shows that the block systems of $G$ and $G^{(5/2)}$ are the same.

\begin{thrm}\label{compute G^5/2}
There exists a positive integer $s$ such that $\clo_{5/2}^s(G) = G^{(5/2)}$.  Additionally, the blocks of $G$ and $G^{(5/2)}$ are the same.
\end{thrm}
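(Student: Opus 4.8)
The plan is to show that the two assertions of the theorem --- that $\clo_{5/2}^s(G) = G^{(5/2)}$ for some $s$, and that $G$ and $G^{(5/2)}$ have the same block systems --- can be proved together, with the block-preservation statement doing most of the work. First I would establish, by induction on the number of applications of $\clo_{5/2}$ and using Lemma \ref{5/2 blocks preserved}, that $\clo_{5/2}^s(G)$ has exactly the same block systems as $G$ for every $s\ge 0$. The base case is trivial; for the inductive step, note that $\clo_{5/2}(\clo_{5/2}^{s}(G))$ is obtained from $H := \clo_{5/2}^{s}(G)$ by adjoining restrictions $\gamma\vert_{E_i}$ of elements of $H$ along the fixer blocks $E_i$ of normal block systems of transitive subgroups of $H$; Lemma \ref{5/2 blocks preserved} (applied once for each such normal block system, or a short iteration of it) shows the block systems are unchanged, and the inductive hypothesis identifies them with those of $G$.

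Next I would argue that the chain $G \le \clo_{5/2}(G) \le \clo_{5/2}^2(G) \le \cdots$ stabilizes. All these groups are subgroups of $S_n$, so the chain is a non-decreasing sequence of subgroups of a finite group and must become constant: there is an $s$ with $\clo_{5/2}^{s}(G) = \clo_{5/2}^{s+1}(G) =: F$. The key point is then that this stable group $F$ is $5/2$-closed. Indeed, $\clo_{5/2}(F) = F$ means that for every transitive subgroup $H\le F$, every normal block system ${\cal B}_H$ of $H$, every block $E$ of the ${\cal B}_H$-fixer block system ${\cal E}_{H,{\cal B}}$ of $H$, and every $\gamma\in \Stab_F(E)$ fixing each block of ${\cal B}_H$ in $E$, we have $\gamma\vert_E\in F$; but this is precisely the defining condition for $F$ to be $5/2$-closed. (Here I should double-check that the definition of $\clo_{5/2}$ ranges over \emph{all} normal block systems of \emph{all} transitive subgroups $H_i\le G$, so that $\clo_{5/2}(F)=F$ really does capture the full closure condition --- the wording ``the set of all normal block systems of some transitive subgroup'' confirms this.) Since $F$ is $5/2$-closed and contains $G$, we get $G^{(5/2)}\le F$. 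Conversely, $G^{(5/2)}$ is $5/2$-closed and contains $G$, so it is closed under each of the operations defining $\clo_{5/2}$; hence $\clo_{5/2}(G)\le G^{(5/2)}$, and by induction $\clo_{5/2}^{s}(G)\le G^{(5/2)}$, i.e. $F\le G^{(5/2)}$. Therefore $F = G^{(5/2)}$, proving the first claim.

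Finally, combining the two pieces: $G^{(5/2)} = F = \clo_{5/2}^{s}(G)$ has the same block systems as $G$ by the first step. I expect the main obstacle to be the careful bookkeeping in the inductive step of the block-preservation argument: Lemma \ref{5/2 blocks preserved} adjoins restrictions coming from a \emph{single} transitive subgroup $H$ with a \emph{single} normal block system ${\cal C}$, whereas $\clo_{5/2}$ adjoins restrictions coming from all such $(H_i,{\cal B}_i)$ at once. One must therefore either reprove a version of Lemma \ref{5/2 blocks preserved} for a finite family of normal block systems, or --- more cleanly --- observe that adjoining these generators can be done one $(H_i,{\cal B}_i)$ at a time and that each intermediate group still has the same block systems as $G$, so Lemma \ref{5/2 blocks preserved} applies at each stage (its hypotheses only require $H\le G$ to be transitive with a normal block system, and $G$ here should be read as the current intermediate group, which contains the original $G$ and has the same blocks). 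A minor secondary point to verify is that the generating set in the definition of $\clo_{5/2}$ is finite (or at least that the ascending chain argument is unaffected), which is immediate since $S_n$ is finite.
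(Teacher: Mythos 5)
Your proposal is correct and follows essentially the same route as the paper: stabilize the ascending chain $\clo_{5/2}^s(G)$ by finiteness of $S_n$, use Lemma \ref{5/2 blocks preserved} inductively (one pair $(H_i,{\cal B}_i)$ at a time) to preserve block systems, and identify the stable group with $G^{(5/2)}$. Your two-sided containment argument (the stable group is $5/2$-closed, hence contains $G^{(5/2)}$; every $\clo_{5/2}^s(G)$ lies in every $5/2$-closed overgroup of $G$, hence in $G^{(5/2)}$) merely spells out what the paper compresses into ``it then follows by the definition of $G^{(5/2)}$.''
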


\begin{proof}
As $S_n$ is finite, there exists a smallest positive integer $s$ such that $\clo^{s+1}_{5/2}(G) = \clo^s_{5/2}(G)$.
Let ${\cal B}$ be a block system of $G$.  Inductively applying Lemma \ref{5/2 blocks preserved}, we see that ${\cal B}$ is a block system of $\clo_{5/2}(G)$.  Inductively applying the previous fact, we see that ${\cal B}$ is a block system of $\clo^s_{5/2}(G)$.  As $\clo_{5/2}^s(G) = \clo_{5/2}^{s+1}(G)$, the block systems ${\cal B}_1,\ldots,{\cal B}_t$ for which there is a transitive subgroup $H_i\le\clo_{5/2}^s(G)$ for which ${\cal B}_i$ is a normal block system of $H_i$ are the block systems ${\cal C}_1,\ldots,{\cal C}_t$ for which there is a transitive subgroup $K_i$ of $\clo_{5/2}^{s+1}(G)$ with ${\cal C}_i$ a normal block system of $K_i$.  It then follows by the definition of $G^{(5/2)}$ that $\clo_{5/2}^s(G) = G^{(5/2)}$.  To finish, we have shown that a block of $G$ is a block of $G^{(5/2)}$.  As $G\le G^{(5/2)}$, a block of $G^{(5/2)}$ is a block of $G$.  Thus the blocks of $G$ and $G^{(5/2)}$ are the same.
\end{proof}

Theorem  \ref{5/2 blocks preserved} gives a theoretical way of computing the $5/2$-closure of any transitive group $G$.  First, find all normal block systems of all transitive subgroups of $G$.  Then calculate $\WStab_G(B)$ for $B\in{\cal B}$ and normal block system ${\cal B}$ found previously. Choose a normal block system ${\cal B}$ of some subgroup $H\le G$ and calculate the ${\cal B}$-fixer block system ${\cal E}$ of $H$, along with the subgroup $F_{{\cal B},E}$ of $G$ which fixes every block of ${\cal B}$ contained in $E\in{\cal E}$.  Then restrict these elements to the appropriate $E\in{\cal E}$.  Repeat this for all block systems previously found.  Then $\clo_{5/2}(G)$ is the group generated by $G$ together with all of the additional elements of $S_n$ that have been found.  Repeat this procedure until no new elements are produced.

\section{The Cayley isomorphism problem}

In this section, we give a very brief overview of the Cayley isomorphism problem, contenting ourselves with only the information that is necessary for this paper.  References are given at the end of this section to publications where much more information can be found.

\begin{definition}
Let $G$ be a group and $S\subseteq G$. Define a {\bf Cayley digraph of $G$},  denoted $\Cay(G,S)$, to be the digraph with vertex set $V(\Cay(G,S)) = G$ and arc set $A(\Cay(G,S)) = \{(g,gs):g\in G, s\in S\}$.  We call $S$ the {\bf connection set of $\Cay(G,S)$}.
\end{definition}

The Cayley isomorphism problem for Cayley graphs began in 1967 when \'Ad\'am conjectured \cite{Adam1967} that two Cayley graphs $\Cay(\Z_n,S)$ and $\Cay(\Z_n,T)$ are isomorphic if an only if there is an element $m\in\Z_n^*$ (a {\bf multiplier}) such that $mS = T$.  Note that $mS = T$ is equivalent to $\Cay(\Z_n,S)$ and $\Cay(\Z_n,T)$ are isomorphic by a group automorphism of $\Z_n$.  Also, the image of a Cayley digraph of $G$ under a group automorphism of $G$ is a Cayley digraph of $G$ \cite[Lemma 1.2.15]{Book}.  So \'Ad\'am was essentially conjecturing that the smallest list of isomorphisms (the group automorphisms of $G$) to check for isomorphism would determine isomorphism between Cayley graphs of $\Z_n$.  Elspas and Turner \cite{ElspasT1970} in 1970 quickly showed that this conjecture is false for both graphs and digraphs, and the conjecture changed into the problem of which groups $G$ have the property that any two Cayley (di)graphs of $G$ are isomorphic if and only if they are isomorphic by a group automorphism of $G$?

\begin{definition}
Let $G$ be a group, and $S\subseteq G$.  We say $\Cay(G,S)$ is a {\bf CI-digraph} of $G$ if and only if whenever $T\subseteq G$ then $\Cay(G,S)\cong\Cay(G,T)$ if and only if there is $\alpha\in \Aut(G)$ such that $\alpha(\Cay(G,S)) = \Cay(G,T)$.
\end{definition}

\begin{definition}\label{leftregularrepresentation}
Let $G$ be a group and $g\in G$.  Define $g_L\colon G\to G$ by $g_L(x) = gx$.  The map $g_L$ is a {\bf left translation of $G$}. The {\bf left regular representation of $G$}, denoted $G_L$, is $G_L = \{g_L:g\in G\}$.
\end{definition}

It is straightforward to show $G_L$ is a group, and $G_L\le\Aut(\Cay(G,S))$ for every $S\subseteq G$.

Of course, one can also ask the same question for other types of combinatorial objects once we have a natural generalization of the notion of a Cayley graph for them.  Sabidussi \cite[Lemma 4]{Sabidussi1958} has shown that a graph $\Gamma$ is isomorphic to a Cayley graph of $G$ if and only if $\Aut(\Gamma)$ contains a regular subgroup isomorphic to $G$.  Hence the next definition extends the notion of a Cayley graph to other combinatorial objects.

\begin{definition}
A {\bf Cayley object} $X$ of a group $G$ in a class ${\cal K}$ of combinatorial objects is one in which $G_L\le\Aut(X)$, the automorphism group of $X$.
\end{definition}

We next generalize the notion of a CI-digraph of $G$ to arbitrary combinatorial objects.

\begin{definition}
For a Cayley object $X$ of $G$ in some class of combinatorial objects ${\cal K}$, we say that $X$ is a {\bf CI-object of $G$} if and only if whenever $X'$ is another Cayley object of $G$ in ${\cal K}$, then $X$ and $X'$ are isomorphic if and only if $\alpha(X) = X'$ for some $\alpha\in\Aut(G)$.
\end{definition}

Babai characterized CI-objects of a group $G$ \cite[Lemma 3.1]{Babai1977}.

\begin{lem}\label{CI}
Let $X$ be a Cayley object of $G$ in some class ${\cal K}$ of combinatorial objects.  Then the following are equivalent:
\begin{enumerate}
\item\label{Ciso1} $X$ is a CI-object of $G$ in ${\cal K}$,
\item\label{Ciso2} whenever $\phi\in S_G$ such that $\phi^{-1}G_L\phi\le\Aut(X)$, $G_L$ and $\phi^{-1}G_L\phi$ are conjugate in $\Aut(X)$.
\end{enumerate}
\end{lem}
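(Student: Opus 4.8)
The statement is Babai's characterization, and the plan is to prove the two implications \eqref{Ciso1} $\Rightarrow$ \eqref{Ciso2} and \eqref{Ciso2} $\Rightarrow$ \eqref{Ciso1} directly, using only the definitions of Cayley object and CI-object together with the elementary fact that $G_L \le \Aut(X')$ whenever $X'$ is a Cayley object of $G$. The key bookkeeping device is that for $\phi \in S_G$, the group $\phi^{-1} G_L \phi$ is again regular (conjugation preserves regularity), so it is the left regular representation of some copy of $G$, and the ``pulled back'' object $\phi^{-1}(X)$ — meaning the object with the same underlying set whose structure is the $\phi^{-1}$-image of the structure of $X$ — is itself a Cayley object of $G$ precisely when $\phi^{-1} G_L \phi \le \Aut(X)$. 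This translation between ``subgroups conjugate to $G_L$ inside $\Aut(X)$'' and ``Cayley objects of $G$ isomorphic to $X$'' is the heart of the argument.

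For \eqref{Ciso2} $\Rightarrow$ \eqref{Ciso1}: suppose $X'$ is another Cayley object of $G$ in $\mathcal{K}$ with $X \cong X'$, say via $\psi \colon X' \to X$. Then $\psi G_L \psi^{-1} \le \Aut(X)$, so by \eqref{Ciso2} there is $\sigma \in \Aut(X)$ with $\sigma^{-1}(\psi G_L \psi^{-1})\sigma = G_L$; setting $\alpha = \psi \sigma$ up to the correct composition order, one checks that $\alpha$ normalizes $G_L$ in $S_G$ and fixes $X$ up to the $\Aut(X)$ part, so $\alpha$ restricts to an isomorphism $X' \to X$ that conjugates $G_L$ to $G_L$; a map in $S_G$ normalizing $G_L$ and sending $X'$ to $X$ gives (after composing with the appropriate left translation to fix the identity) an element of $\Aut(G)$ carrying $X'$ to $X$. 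The reverse direction of the ``iff'' in the definition of CI-object is immediate since $\Aut(G)$ acts on Cayley objects.

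For \eqref{Ciso1} $\Rightarrow$ \eqref{Ciso2}: given $\phi \in S_G$ with $\phi^{-1} G_L \phi \le \Aut(X)$, form $X' = \phi^{-1}(X)$, the object on underlying set $G$ obtained by transporting the structure of $X$ along $\phi^{-1}$. Since $\phi \colon X' \to X$ is an isomorphism by construction, and since $G_L = \phi^{-1}(\phi^{-1} G_L \phi)\phi \cdot(\text{correction})$ — more carefully, $G_L \le \Aut(X')$ exactly because $\phi^{-1} G_L \phi \le \Aut(X)$ — the object $X'$ is a Cayley object of $G$ with $X' \cong X$. By \eqref{Ciso1} there is $\alpha \in \Aut(G)$ with $\alpha(X') = X$, and then $\phi \circ \alpha^{-1}$ (again up to composition order) is an isomorphism $X \to X$, i.e. an element of $\Aut(X)$, that conjugates $G_L$ to $\phi^{-1} G_L \phi$, using that $\Aut(G)$ normalizes $G_L$ inside $S_G$.

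The main obstacle, and the part requiring genuine care rather than routine manipulation, is tracking composition conventions and the distinction between an abstract isomorphism of combinatorial objects and the induced conjugation on permutation groups: one must verify that an isomorphism $\beta \colon X' \to X$ of Cayley objects which happens to normalize $G_L$ in $S_G$ can be adjusted by a left translation (which lies in both $\Aut(X)$ and $\Aut(X')$ and normalizes $G_L$) so as to fix the group identity, whence it becomes a bona fide group automorphism of $G$; and conversely that a group automorphism of $G$ preserving the Cayley structure is the same data as a structure-preserving bijection normalizing $G_L$. Once this dictionary is set up cleanly, both implications are short.
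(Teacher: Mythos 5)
The paper does not actually prove this lemma --- it is quoted verbatim from Babai \cite[Lemma 3.1]{Babai1977} and used as a black box --- so there is no in-paper argument to compare against. Your proposal is, in substance, the standard proof of Babai's lemma and the ideas are all present and correct: transport of structure along $\phi$ turns ``regular subgroups of $\Aut(X)$ conjugate to $G_L$ in $S_G$'' into ``Cayley objects of $G$ isomorphic to $X$,'' and the identification $N_{S_G}(G_L)\cap\Stab(e) = \Aut(G)$ (i.e., a permutation of $G$ fixing the identity and normalizing $G_L$ is a group automorphism) converts a normalizing isomorphism into the required $\alpha\in\Aut(G)$ after composing with a left translation. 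Two points deserve tightening. First, in the $(1)\Rightarrow(2)$ direction your choice $X'=\phi^{-1}(X)$ is on the wrong side: since $\Aut(\phi(X))=\phi\Aut(X)\phi^{-1}$, the hypothesis $\phi^{-1}G_L\phi\le\Aut(X)$ gives $G_L\le\Aut(\phi(X))$, so you should set $X'=\phi(X)$; then with $\alpha\in\Aut(G)$ satisfying $\alpha(X)=X'$, the element $\sigma=\alpha^{-1}\phi$ lies in $\Aut(X)$ and $\sigma^{-1}G_L\sigma=\phi^{-1}\alpha G_L\alpha^{-1}\phi=\phi^{-1}G_L\phi$, exactly as you intend. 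This is precisely the composition-order issue you flag, but as written the displayed identity ``$G_L\le\Aut(X')$ exactly because $\phi^{-1}G_L\phi\le\Aut(X)$'' is false for $X'=\phi^{-1}(X)$. Second, you implicitly use that the class ${\cal K}$ is closed under relabelling of the underlying set (so that $\phi(X)$ is again an object of ${\cal K}$); this is harmless for any reasonable notion of ``class of combinatorial objects'' but should be stated, as it is the only hypothesis on ${\cal K}$ the argument needs.
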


\begin{definition}
Let $G$ be a group and ${\cal K}$ a class of combinatorial objects.  We say that $G$ is a {\bf CI-group with respect to ${\cal K}$} if every Cayley object of $G$ in ${\cal K}$ is a CI-object of $G$.
\end{definition}

In this paper, we will mainly be concerned with Cayley objects of a cyclic group.  Such Cayley objects are called {\bf circulants}.

Much work on the problem of determining which groups are CI-groups with respect to graphs has been done.  We refer the reader to \cite{Li2002} for a survey paper on the problem, as well as to \cite[Theorem 5.2]{DobsonMS2022} for the current list of possible CI-groups with respect to digraphs.  See also \cite[Chapter 7]{Book}.

\section{Some permutation group theory}

In this section, we will summarize much of the permutation group theory in the literature which is directly applicable to the problems considered here.  Also, we prove one additional result which will be useful.  We start with some definitions.

\begin{definition}\label{mstepdefin}
Let $n = p_1^{a_1}p_2^{a_2}\cdots p_r^{a_r}$ be the prime power decomposition of $n$, and define $\Omega\colon {\mathbb N}\to {\mathbb N}$ by $\Omega(n) = \sum_{i=1}^ra_i$ (so $m = \Omega(n)$ is the number of prime divisors of $n$ with repetition allowed).  A transitive group $G\le S_n$ is {\bf $m$-step imprimitive} if there exists a sequence of block systems ${\cal B}_0 \prec{\cal B}_1\prec \ldots \prec{\cal B}_m$ of $G$, where ${\cal B}_0$ is the set of all singleton sets and ${\cal B}_m$ is $\{\Z_n\}$, and if $B_{i+1}\in{\cal B}_{i+1}$ and $B_i\in{\cal B}_i$, then $\vert B_{i+1}\vert/\vert B_i\vert$ is prime, $0\le i\le m - 1$.  (Technically this last condition is not strictly necessary as ${\cal B}_{i+1}$ is not, by definition, equal to ${\cal B}_i$, but we list it anyway to emphasize the property).  If, in addition, each ${\cal B}_i$ is normal, then we say that $G$ is {\bf normally $m$-step imprimitive}.  We call ${\cal B}_0\prec{\cal B}_1\prec\ldots\prec{\cal B}_m$ an {\bf $m$-step imprimitivity sequence of $G$}, and a {\bf normal $m$-step imprimitivity sequence of $G$} if each ${\cal B}_i$ is a normal block system of $G$.
\end{definition}

\begin{definition}
Let $G\le S_X$ and $H\le S_Y$.  Define the {\bf wreath product of $G$ and $H$}, denoted $G\wr H$, to be the set of all permutations of $X\times Y$ of the form $(x,y)\mapsto (g(x),h_x(y))$, where $g\in G$ and each $h_x\in H$.
\end{definition}

It is straightforward to show that $G\wr H$ is a group.  See \cite[\S 4.2]{Book} for examples concerning the wreath product of groups.

Let $\la x\ra = (\Z_n)_L$ and $\la y\ra\le S_n$ a conjugate of $\la x\ra$.  There are many results in the literature dealing with how to conjugate $\la y\ra$ by an element of $\la x,y\ra$ to make it ``closer" to $\la x\ra$.   In view of Lemma \ref{CI}, these are exactly the types of results we would want when considering the CI-problem for cyclic groups and any class of combinatorial object.  Specifically, applying (in this order) \cite[Theorem 4.9]{Muzychuk1999}, \cite[Theorem 1.8]{Muzychuk1999}, \cite[Lemma 19]{Dobson2008a}, and \cite[Lemma 24]{Dobson2008a}, one obtains the following result:

\begin{lem}\label{already known}
Let $n$ be a positive integer with prime power decomposition $n = p_1^{a_1}p_2^{a_2}\cdots p_r^{a_r}$ where each $a_i\ge 1$, $1\le i\le r$, and $p_1 > p_2 > \ldots > p_r$.  Let $\la x\ra = (\Z_n)_L$, and $y\in S_n$ generate a regular cyclic subgroup.  Let $m = \Omega(n)$.  Then there exists $\delta\in\la x,y\ra$ such that $\la x,\delta^{-1}y\delta\ra$ satisfies the following conditions:
\begin{enumerate}
\item $\la x,\delta^{-1}y\delta\ra$ is normally $m$-step imprimitive with normal imprimitivity sequence ${\cal B}_0\prec{\cal B}_1\prec\cdots\prec{\cal B}_m$ and $\vert B_{i+1}\vert/\vert B_i\vert \ge \vert B_{i + 2}\vert/\vert B_{i + 1}\vert$, $0\le i\le m-2$,
\item Set $b_0 = 1$, and for $1\le i\le r$, set $b_i = \sum_{j=1}^ia_i$.  If $b_i - 1\le k\le b_{i + 1} - 1$, $B_k\in{\cal B}_k$, $B_{k + 1}\in{\cal B}_{k+1}$, then $\vert B_{k+1}/\vert B_k\vert = p_{i + 1}$,
\item $\la x,\delta^{-1}y\delta\ra$ is solvable,
\item $\la x,\delta^{-1}y\delta\ra$ is permutation isomorphic to a subgroup of $$\AGL(1,p_r)^{[a_r]}\wr\AGL(1,p_{r-1})^{[a_{r-1}]}\wr\ldots\wr\AGL(1,p_1)^{[a_1]},$$ where $\AGL(1,p_i)^{[a_i]}$ is the wreath product of $\AGL(1,p_i)$ with itself $a_i$ times, and
\item\label{part 5} if $p_1$ is odd, then $\fix_{\la x,\delta^{-1}y\delta\ra}({\cal B}_1)$ has a Sylow $p_1$-subgroup or order at least $p_1^2$, or $\fix_{\la x,\delta^{-1}y\delta\ra}({\cal B}_{a_1})$ has a unique cyclic Sylow $p_1$-subgroup $P = \la x^{n/p_1^{a_1}}\ra$, $P\le\C(\la x,\delta^{-1}y\delta\ra)$, and $\fix_{\la x,\delta^{-1}y\delta\ra}({\cal B}_{a_1}) = P$.
\end{enumerate}
\end{lem}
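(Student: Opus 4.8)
The plan is to build $\delta$ as a product of successively chosen conjugating elements, at each stage quoting a result from the literature that sharpens the structure of $\la x,\delta^{-1}y\delta\ra$ while retaining the properties already obtained; in effect the lemma is an orchestration of four known facts about conjugating a regular cyclic subgroup toward $(\Z_n)_L$. Throughout, write $G=\la x,\delta^{-1}y\delta\ra$ for the group eventually produced.

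First I would apply \cite[Theorem 4.9]{Muzychuk1999} to obtain $\delta_1\in\la x,y\ra$ so that $\la x,\delta_1^{-1}y\delta_1\ra$ is normally $m$-step imprimitive; this produces a normal imprimitivity sequence ${\cal B}_0\prec\cdots\prec{\cal B}_m$ and is half of part (1). Working inside this group, I would then invoke \cite[Theorem 1.8]{Muzychuk1999} to find a further conjugating element making the consecutive block-size ratios $\vert B_{i+1}\vert/\vert B_i\vert$ non-increasing, which completes (1). Part (2) is then a short combinatorial consequence: the primes dividing $n$ are $p_1>p_2>\cdots>p_r$ with $p_i$ appearing exactly $a_i$ times among the $m$ ratios, so a non-increasing listing of those ratios is forced to read $p_1$ (repeated $a_1$ times), then $p_2$ ($a_2$ times), and so on, which is precisely the assignment encoded by $b_0,b_1,\dots,b_r$. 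Next, \cite[Lemma 19]{Dobson2008a} applied to the group obtained so far gives solvability (part (3)) together with the permutation embedding into $\AGL(1,p_r)^{[a_r]}\wr\cdots\wr\AGL(1,p_1)^{[a_1]}$ (part (4)); the mechanism is that each level of the normal imprimitivity structure has prime index $p_i$, and a solvable transitive group of prime degree $p_i$ lies inside $\AGL(1,p_i)$. Finally, \cite[Lemma 24]{Dobson2008a} supplies the delicate Sylow $p_1$-analysis of part (5), giving the dichotomy: either $\fix_G({\cal B}_1)$ already has a Sylow $p_1$-subgroup of order at least $p_1^2$, or the Sylow $p_1$-subgroup of $\fix_G({\cal B}_{a_1})$ is the unique cyclic group $\la x^{n/p_1^{a_1}}\ra$, is central in $G$, and exhausts $\fix_G({\cal B}_{a_1})$.

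The main obstacle is not any individual step but the compatibility of the four: one must check that the conjugation used for \cite[Theorem 1.8]{Muzychuk1999} does not destroy normal $m$-step imprimitivity, that the hypotheses of \cite[Lemma 19]{Dobson2008a} and \cite[Lemma 24]{Dobson2008a} (which are phrased precisely in terms of the structure built by the earlier steps) are genuinely in force when those results are invoked, and that every conjugating element used can be taken inside the ambient group $\la x,y\ra$, so that the accumulated product $\delta$ lies there as claimed. Granting these compatibility checks, the proof is simply the sequential application of the four cited results in the stated order.
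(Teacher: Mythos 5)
Your proposal follows exactly the route the paper takes: the paper offers no separate proof of Lemma \ref{already known} beyond the remark immediately preceding it, which states that the lemma is obtained by applying, in this order, \cite[Theorem 4.9]{Muzychuk1999}, \cite[Theorem 1.8]{Muzychuk1999}, \cite[Lemma 19]{Dobson2008a}, and \cite[Lemma 24]{Dobson2008a}. Your write-up simply makes explicit which cited result supplies which conclusion and flags the compatibility checks, so it is the same argument in slightly expanded form.
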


Note that in (\ref{part 5}) if $p_1$ is not odd, then $r = 1$ and $n$ is a power of $2$.  Our next result removes the hypothesis that $p_1$ is odd in the last part of the previous result.

\begin{lem}\label{pimpernel found}
Let $k\ge 2$ and $x,y\in S_{2^k}$ such that $\la x\ra$ and  $\la y\ra$ are regular cyclic subgroups and $\la x,y\ra$ is a $2$-group.  Let ${\cal B}_0\prec{\cal B}_1\prec\cdots\prec{\cal B}_k$ be a normal $k$-step imprimitivity sequence of $\la x,y\ra$.  Then $\la x,y\ra = \la x\ra$ or $\vert \fix_{\la x,y\ra}({\cal B}_1)\vert \ge 4$.
\end{lem}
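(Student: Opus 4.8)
The plan is to argue by contradiction: suppose $\la x,y\ra\neq\la x\ra$ but $\abs{\fix_{\la x,y\ra}({\cal B}_1)}\le 2$. Since ${\cal B}_1$ has blocks of size $2$ and $\fix_{\la x,y\ra}({\cal B}_1)$ is a normal subgroup of the $2$-group $\la x,y\ra$ whose action on each block of ${\cal B}_1$ lies in $S_2$, the hypothesis $\abs{\fix_{\la x,y\ra}({\cal B}_1)}\le 2$ forces $\fix_{\la x,y\ra}({\cal B}_1)$ to be either trivial or generated by a single involution that acts as a transposition on at least one block; in particular the induced action $\la x,y\ra/{\cal B}_1$ on the $2^{k-1}$ blocks of ${\cal B}_1$ is faithful or nearly so. The first step is therefore to set $G=\la x,y\ra$, $N=\fix_G({\cal B}_1)$, and to pin down the structure of $N$: I would show that if $N\neq 1$ then $N=\la t\ra$ with $t$ an involution, and that $x^{2^{k-1}},y^{2^{k-1}}\in N$ since these generate the (unique, order-$2$) subgroups of $\la x\ra,\la y\ra$ acting within blocks of ${\cal B}_1$. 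If $N\neq 1$ this gives $x^{2^{k-1}}=y^{2^{k-1}}=t$; if $N=1$ then $x^{2^{k-1}}=y^{2^{k-1}}=1$, which is absurd for $k\ge 2$ regular cyclic groups of order $2^k$, so in fact $N\neq 1$ and $x^{2^{k-1}}=y^{2^{k-1}}=t$.

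The second step is to pass to the quotient $\bar G=G/{\cal B}_1\le S_{2^{k-1}}$, which contains the two cyclic subgroups $\la\bar x\ra,\la\bar y\ra$ of order $2^{k-1}$ (order exactly $2^{k-1}$ because $x^{2^{k-1}}=t\in N$ acts trivially on ${\cal B}_1$ while $x^{2^{k-2}}$ does not). Both $\la\bar x\ra$ and $\la\bar y\ra$ are regular on the $2^{k-1}$ blocks of ${\cal B}_1$, and ${\cal B}_1/{\cal B}_1\prec{\cal B}_2/{\cal B}_1\prec\cdots\prec{\cal B}_k/{\cal B}_1$ is a normal $(k-1)$-step imprimitivity sequence of $\bar G$. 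Now $\ker(G\to\bar G)=N=\la t\ra$ has order $2$, so $\abs{\fix_{\bar G}({\cal B}_2/{\cal B}_1)}=\abs{\fix_G({\cal B}_2)}/\abs{N\cap\fix_G({\cal B}_2)}=\abs{\fix_G({\cal B}_2)}/2$ (since $N\le\fix_G({\cal B}_2)$). This sets up an induction on $k$: if $\bar G=\la\bar x\ra$ we will derive a contradiction directly, and otherwise the inductive hypothesis applied to $\bar G\le S_{2^{k-1}}$ (valid once $k-1\ge 2$, with the base case $k-1=1$ handled separately) gives $\abs{\fix_{\bar G}({\cal B}_2/{\cal B}_1)}\ge 4$, hence $\abs{\fix_G({\cal B}_2)}\ge 8$.

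The third step is to convert the information $\abs{\fix_G({\cal B}_2)}\ge 8$ (with $\fix_G({\cal B}_2)$ acting on blocks of size $4$, so embedding in a subgroup of $(S_2\wr S_2)^{2^{k-2}}$ constrained to be a $2$-group with a semiregular cyclic subgroup generated by $x^{2^{k-2}}$ of order $4$) back into a statement about $\fix_G({\cal B}_1)$. The point is that $\fix_G({\cal B}_2)$ fixes each block $C$ of ${\cal B}_2$ setwise, each such $C$ is a union of two blocks of ${\cal B}_1$, and $\fix_G({\cal B}_2)^C\le S_2\wr S_2$ is a $2$-group containing a regular cyclic subgroup $\la (x^{2^{k-2}})^C\ra\cong\Z_4$; a direct inspection of subgroups of $\Z_2\wr\Z_2=D_8$ containing a cyclic regular subgroup of order $4$ shows that any such group of order $\ge 8$ is all of $D_8$, and in $D_8$ the pointwise stabilizer of one block of ${\cal B}_1$ (inside that $C$) is nontrivial. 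Patching these local contributions together — using that $G$ acts transitively on ${\cal B}_2$ to move the witness involutions around, and that $\fix_G({\cal B}_1)\supseteq$ the subgroup of $\fix_G({\cal B}_2)$ fixing each ${\cal B}_1$-block setwise — yields $\abs{\fix_G({\cal B}_1)}\ge 4$, contradicting our assumption. The base case $k=2$ is checked by hand: $G$ is a $2$-group of degree $4$ with two regular cyclic subgroups of order $4$; the only such groups are $\Z_4$, $D_8$, and the Sylow $2$-subgroup of $S_4$, and in the latter two $\abs{\fix_G({\cal B}_1)}\ge 2\cdot 2=4$.

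The main obstacle I anticipate is the bookkeeping in the third step: translating "the quotient stabilizer is large" into "the original stabilizer is large" is not automatic, because $\fix_G({\cal B}_2)$ could conceivably be large only through its action on the quotient ${\cal B}_1/{\cal B}_2$-level rather than through genuine pointwise-stabilizing elements. Handling this cleanly probably requires choosing the imprimitivity sequence (or at least ${\cal B}_1$) judiciously — using the freedom in Lemma \ref{already known} or the block-system theory of Section 2 — so that $x^{2^{k-1}}$ really does act as a transposition on a block and the local $D_8$ picture is forced; alternatively one can avoid the issue by arguing that a cyclic regular subgroup of $\Z_4\wr$ something of order $2^k$ cannot have all its $\fix$-subgroups as small as order $2$ at every level, which is essentially a counting argument comparing $\abs{G}$ with $\abs{\la x\ra}$ and the orders of the successive block kernels.
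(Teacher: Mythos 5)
Your first two steps are sound: $N=\fix_G({\cal B}_1)\ne 1$ since $x^{2^{k-1}}$ acts as a transposition on each block of ${\cal B}_1$, the equality $x^{2^{k-1}}=y^{2^{k-1}}=t$ follows when $\vert N\vert=2$, the quotient $\bar G=G/{\cal B}_1$ inherits a normal imprimitivity sequence with $\vert\fix_{\bar G}({\cal B}_2/{\cal B}_1)\vert=\vert\fix_G({\cal B}_2)\vert/2$, and $\bar G=\la\bar x\ra$ forces $G=\la x\ra$ by counting. But the third step --- descending from $\vert\fix_G({\cal B}_2)\vert\ge 8$ to $\vert\fix_G({\cal B}_1)\vert\ge 4$ --- has a genuine gap, and it is exactly the one you flagged. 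First, $\vert\fix_G({\cal B}_2)\vert\ge 8$ does not produce a block $C\in{\cal B}_2$ with $\vert\fix_G({\cal B}_2)^C\vert\ge 8$; and even when some restriction is all of $D_8$, the element of $\fix_G({\cal B}_2)$ realizing a local block-pointwise stabilizer may swap the two ${\cal B}_1$-blocks inside a different block of ${\cal B}_2$, so it never lands in $\fix_G({\cal B}_1)$. Concretely, for $k=3$ with $x=(01234567)$, ${\cal B}_1=\{\{0,4\},\{1,5\},\{2,6\},\{3,7\}\}$ and ${\cal B}_2=\{\{0,2,4,6\},\{1,3,5,7\}\}$, the group $F=\la x^2,\,(02)(46)(15)\ra$ has order $8$, fixes both blocks of ${\cal B}_2$, contains the semiregular $\la x^2\ra\cong\Z_4$, preserves ${\cal B}_1$, and restricts to $D_8$ on $\{0,2,4,6\}$; yet the subgroup of $F$ fixing every block of ${\cal B}_1$ is only $\la x^4\ra$ of order $2$, because the element of $F$ acting as $(26)$ on $\{0,2,4,6\}$ acts as a block swap on $\{1,3,5,7\}$. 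So the properties your step 3 invokes do not imply the conclusion; to exclude such an $F$ as a candidate for $\fix_G({\cal B}_2)$ you must use that it contains \emph{both} $x^{2^{k-2}}$ and $y^{2^{k-2}}$ (in the example the only elements of order $4$ in $F$ are $x^{\pm 2}$, which forces $\la y^2\ra=\la x^2\ra$ --- information your argument never exploits), and your suggested patch of conjugating witness involutions around ${\cal B}_2$ does not repair this.

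The paper avoids the quotient induction entirely. It takes $j$ maximal with $\la x^{2^{k-j}}\ra=\la y^{2^{k-j}}\ra$ (your step 1 establishes $j\ge 1$), assumes $j<k$, notes the common subgroup is central so that $\Stab_{\la x,y\ra}(B_j)^{B_j}=\la x^{2^{k-j}}\ra^{B_j}$ by self-centralization of transitive abelian groups, and then works with the nontrivial element $x^{2^{k-j-1}}y^{-2^{k-j-1}}\in\fix_G({\cal B}_j)$: correcting it by a suitable power of $x^{2^{k-j}}$ makes it trivial on one block $B_j$ but not globally, and an appropriate power of the corrected element is an involution in $\fix_G({\cal B}_1)$ \emph{with a fixed point}, hence outside the fixed-point-free $\la t\ra$, giving $\vert\fix_G({\cal B}_1)\vert\ge 4$ in one stroke. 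If you want to keep your inductive frame, the viable repair is to prove and iterate the statement ``$\vert\fix_G({\cal B}_1)\vert=2$ implies $\la x^{2^{k-i}}\ra=\la y^{2^{k-i}}\ra$'' up the tower, rather than to push lower bounds on $\fix$-orders down through the block hierarchy.
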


\begin{proof}
Notice that $x^{2^{k-1}}, y^{2^{k-1}}\in\fix_{\la x,y\ra}({\cal B}_1)$.  Additionally, as both $\la x^{2^{k-1}}\ra$ and $\la y^{2^{k-1}}\ra$ are semiregular subgroups of order $2$ that fix each block of ${\cal B}_1$ and permutate the elements of a block of ${\cal B}_1$ as a transposition, $\la x^{2^{k-1}}\ra = \la y^{2^{k-1}}\ra$.  Let $1\le j\le k$ be maximum such that $\la x^{2^{k-j}}\ra = \la y^{2^{k-j}}\ra$.  By the previous argument, $j$ exists.  The result follows if $j = k$, so we assume without loss of generality that $1 \le j < k$.  Then $\la x^{k-j}\ra = \la y^{k-j}\ra$ commutes with every element of $\la x\ra$ and $\la y\ra$, and so $x^{k-j}\in \C(\la x,y\ra)$.  Additionally, as $\la x^{k-j}\ra\in \C(\la x,y\ra)$, we see that $\Stab_{\la x,y\ra}(B_j)^{B_j}$ commutes with $\la x^{k-j}\ra^{B_j}$ for every $B_j\in{\cal B}_j$.  As a transitive abelian group is self-centralizing \cite[Corollary 2.2.18]{Book}, we see that $\Stab_{\la x,y\ra}(B_j)^{B_j} = \la x^{k-j}\ra^{B_j}$ for every $B_j\in{\cal B}_j$.   Then $\la x,y\ra$ is permutation isomorphic to a subgroup of $(\la x,y\ra/{\cal B}_j)\wr(\Z_{2^{k-j}})_L$ by the Embedding Theorem \cite[Theorem 4.3.1]{Book}.

Now, by arguments at the beginning of the proof of this result, $\la x^{k-j-1}\ra/{\cal B}_j = \la y^{k-j-1}\ra/{\cal B}_j$.
Then $1\not = x^{k-j-1}y^{k-j-1}\in\fix_{\la x,y\ra}({\cal B}_j)$ as, by hypothesis, $\la x^{k - j - 1}\ra\not = \la y^{k - j - 1}\ra$.  As $\Stab_{\la x,y\ra}(B_j)^{B_j} = \la x^{k-j}\ra^{B_j}$ for every $B_j\in{\cal B}_j$, we see that $$\la x^{k-j-1}y^{k-j-1}\ra^{B_j}\in \la x^{k - j}\ra^{B_j}$$ for every $B_j\in{\cal B}_j$.  Then there exists $a\in\Z$ such that $[(x^{k-j})^a(x^{k-j-1}y^{k-j-1})]^{B_j} = 1$ for some fixed $B_j\in{\cal B}_j$.  But as $\la x^{k - j -1}\ra\not = \la y^{k - j - 1}\ra$, it cannot be the case that $(x^{k-j})^a(x^{k-j-1}y^{k-j-1}) = 1$.  Then there is $B_j'\in{\cal B}_j$ such that $$[(x^{k-j})^a(x^{k-j-1}y^{k-j-1})]^{B_j'}\not = 1.$$  As $\Stab_{\la x,y\ra}(B_j)^{B_j} = \la x^{k-j}\ra^{B_j}$ for every $B_j\in{\cal B}_j$,  $\Stab_{\la x,y\ra}(B_j')^{B_j'}$ is a regular cyclic subgroup of order dividing $2^j$, and so contains a unique subgroup of order $2$.  We conclude that if $(x^{k-j})^a(x^{k-j-1}y^{k-j-1})$ has order $2^\ell$, then $$[(x^{k-j})^a(x^{k-j-1}y^{k-j-1})]^{2^{\ell - 1}}\in\fix_{\la x,y\ra}({\cal B}_1),$$ has order $2$, and has a fixed point.  Thus $\fix_{\la x,y\ra}({\cal B}_1)\not = \la x^{2^{k-1}}\ra$, and the result follows.
\end{proof}

\section{Configurations and $9/8$-closed groups}

In this section we consider a smaller class of groups than $5/2$-closed groups, which we will call $9/8$-closed groups (and will define another used later).  The reasoning for the various choices of names of families of groups is as follows.  In Wielandt's $k$-closure hierarchy, the class of groups always becomes larger as $k$ grows by \cite[Theorem 5.10]{Wielandt1969}.  The $1$-closed groups are just the symmetric groups \cite[Theorem 5.11]{Wielandt1969}.  So, following Wielandt, we want the family of groups to ``grow" as the number grows.  We put grow in quotes as we mean intuitively, not necessarily absolutely.  Thus we mean that the set of all $2$-closed groups is by and large ``bigger" than the set of $9/8$-closed groups.  But not every $9/8$-closed group is $2$-closed.  For classes outside of Wielandt's hierarchy though, inclusion does hold.  We use the equivalence relation $\equiv$ as defined in Definition \ref{automorphismtooldef} but impose some additional conditions (thus maintaining inclusion).

\begin{definition}
Let $G\le S_n$ be transitive and $5/2$-closed.  We will say that $G$ is $9/8$-closed if whenever $H\le G$ is transitive with normal block system ${\cal B}$, then the ${\cal B}$-fixing block system ${\cal E}_{H,{\cal B}}$ of $H$ is $\{\Z_n\}$.  Let $K_H\le G$ be the largest subgroup that has ${\cal B}$ as a block system.  We say $G$ is $5/4$-closed if whenever $H\le G$ is transitive with normal block system ${\cal B}$ then ${\cal E}_{K_H,{\cal B}}$ is either ${\cal B}$ or $\Z_n$.   We say $G$ is $3/2$-closed if $G$ is $5/4$-closed and whenever ${\cal E}_{H,{\cal B}} = {\cal B}$, then $\fix_K({\cal B})^B = S_B$ for every $B\in{\cal B}$.
\end{definition}

Notice that in the above definition, for the $5/4$-closure and $3/2$-closure we do not compute the ${\cal B}$-fixing block system of $H$, but of the largest subgroup $K_H$ of $G$ which has ${\cal B}$ as a block system.

Clearly a $9/8$-closed group is $5/4$-closed, and a $5/4$-closed group is $3/2$-closed.  Also observe that a transitive subgroup of a $9/8$-closed group is also $9/8$-closed.  At present, we have no applications for $5/4$-closed groups, but they are an obvious class between $9/8$-closed and $3/2$-closed groups.  We will consider $3/2$-closed groups in the next section.

Our next goal is to define most of the different combinatorial objects which we will be considering in this section.  The following definitions and results are mostly taken from \cite{Dobson2023apreprint}.

\begin{definition}\label{tuple def}
Let $X$ be a finite set and $T$ a subset of $\cup_{i=1}^n X^i$ for some $n < \vert X\vert$.  Hence $T$ is a set of $k_i$-tuples with elements in $X$ for some set of integers $k_1,\ldots,k_r < n$.  We will call $T$ a {\bf tuple system} on $X$.  Let $S = \{\{u_1,\ldots,u_t\}:(u_1,\ldots,u_t)\in T\}$, so $S$ is the set of coordinates of each element of $T$, and we call it the  {\bf set system corresponding to $T$}.  Let $m\ge 0$ be an integer.  We say a set system $S$ is $m$-intersecting if for every $U_1,U_2\in S$ with $U_1\not = U_2$, $\vert U_1\cap U_2\vert\le m$.  A tuple system $T$ will also be called $1$-intersecting if its corresponding set system is $1$-intersecting.
\end{definition}

\begin{example}
The arc set of a finite digraph $\Gamma$ is a tuple system, with corresponding set system the set of edges of the underlying simple graph.  Also, observe that $A(\Gamma)$ is $1$-intersecting, as two different arcs can only share at most one head or tail.
\end{example}

\begin{definition}
Let $P$ be a set whose elements are called points, and $L$ be a set whose elements are called lines.  An {\bf incidence relation} is an ordered triple $(P,L,{\cal I})$ where ${\cal I}\subseteq P\times L$.  We say the point $p$ is on the line $\ell$ or the line $\ell$ contains the point $p$ if $(p,\ell)\in {\cal I}$.
\end{definition}

\begin{definition}
A {\bf configuration} is an incidence relation $(P,L,{\cal I})$ for which there are positive integers $q$ and $k$ such that the following conditions hold:
\begin{itemize}
\item each of the points is on exactly $q$ lines,
\item each line contains exactly $k$ points, and
\item two points are on at most one line and two lines contain at most one point in common.
\end{itemize}
\end{definition}

Thus a configuration is a $1$-intersecting set system.  For more information on configurations, see \cite{Grunbaum2009}, \cite{PisanskiS2013}, or \cite[Section 6.4]{Book}.

\begin{definition}
Let $T$ be a tuple (set) system.  We say $T$ is {\bf colored} if each tuple (set) in $T$ has been identified with a color (or integer).  Hence there is a function $f:T\to {\mathbb N}$ and $f(t)$ is simply the color of the tuple (set) $t\in T$.
\end{definition}

We may think of an uncolored tuple (set) system $T$ as a colored tuple (set) system simply by assigning the same color to every element of $T$.

\begin{definition}
Let $T$ and $T'$ be colored tuple (set) systems defined on $X$ and $Y$, respectively.  An {\bf isomorphism} from $T$ to $T'$ is a bijection $f:X\to Y$ that preserves tuples (sets) and colors.  That is, $f(t)\in T'$ if and only if $t\in T$ and for every color $i$, the image of the set of all tuples (sets) in $T$ colored $i$ is the set of all tuples (sets) colored $i'$ in $T'$, for some color $i'$.  An {\bf automorphism} of $T$ is an isomorphism from $T$ to $T$ that fixes each color, and the set of all automorphisms of $T$ is the {\bf automorphism group} of $T$, denoted $\Aut(T)$.  A colored tuple (set) system $T$ is {\bf point-transitive} if $\Aut(T)$ is transitive on $X$.
\end{definition}

In \cite[Example 3.7]{Dobson2023apreprint} it was shown that a configuration can be identified with a tuple system in such a way that the automorphism group of the configuration is the same as the automorphism group of the tuple system.  Thus results on automorphism groups of tuple systems are also results about automorphism groups of configurations and digraphs.

\begin{lem}\label{Aut T and C}
Let $T$ be a colored tuple system with $C$ the set system corresponding to $T$.  Then $\Aut(T)\le \Aut(C)$.  Consequently, if $\Aut(C)$ is $9/8$-closed, then $\Aut(T)$ is $9/8$-closed.
\end{lem}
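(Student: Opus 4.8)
The plan is to verify the containment $\Aut(T)\le\Aut(C)$ straight from the definitions, and then to read off the ``consequently'' from the remark recorded just after the definition of $9/8$-closed groups, namely that a transitive subgroup of a $9/8$-closed group is again $9/8$-closed.

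First I would note that $\Aut(T)$ and $\Aut(C)$ both act on the common point set $X$, so it suffices to check that every $f\in\Aut(T)$ maps $C$ onto itself. Take $U\in C$; by the definition of the set system corresponding to $T$ we may write $U=\{u_1,\dots,u_t\}$ as the coordinate set of some tuple $(u_1,\dots,u_t)\in T$. Since $f$ preserves tuples, $f((u_1,\dots,u_t))=(f(u_1),\dots,f(u_t))\in T$, and the coordinate set of this tuple is precisely $f(U)=\{f(u_1),\dots,f(u_t)\}$; hence $f(U)\in C$. This gives $f(C)\subseteq C$, and since $C$ is finite and $f$ is injective on $X$, the induced map $U\mapsto f(U)$ in fact permutes $C$, so $f\in\Aut(C)$. (Equivalently, one applies the same argument to $f^{-1}$, which is again an automorphism of $T$ because the tuple-preserving clause in the definition of isomorphism is an ``if and only if''.) The colourings cause no difficulty here: $C$ is an uncoloured set system, and the colour-preserving requirement on elements of $\Aut(T)$ only makes $\Aut(T)$ smaller.

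Granting the containment, the second assertion follows at once. Since $9/8$-closedness is a property of transitive groups, the statement is to be read under the assumption that $\Aut(T)$ — and hence, by the containment, $\Aut(C)$ — is transitive (that is, that $T$ is point-transitive). Then $\Aut(T)$ is a transitive subgroup of the $9/8$-closed group $\Aut(C)$, so $\Aut(T)$ is $9/8$-closed by the remark cited above. I do not anticipate a genuine obstacle — the lemma is essentially bookkeeping — but the two points that deserve a moment's care are keeping track that $f$ acts as a \emph{bijection} of $C$ rather than merely mapping $C$ into itself, and making sure the transitivity needed even to speak of $9/8$-closedness of $\Aut(T)$ is in place before invoking the ``transitive subgroup'' remark.
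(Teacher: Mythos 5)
Your proposal is correct and follows essentially the same route as the paper: the containment $\Aut(T)\le\Aut(C)$ is verified by the identical definition-chase (the coordinate set of the image tuple is the image of the coordinate set), and the ``consequently'' is obtained by observing that a transitive subgroup of a $9/8$-closed group is $9/8$-closed, which is exactly the observation the paper's proof uses. Your extra care about $f$ inducing a bijection of $C$ and about the transitivity hypothesis is sensible bookkeeping but does not change the argument.
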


\begin{proof}
Let $\gamma\in\Aut(T)$, with $t = (t_1,\ldots,t_k)\in T$ of color $k$, so that $\{t_i:1\le i\le k\}\in C$.  Then $\gamma(t_1,\ldots,t_k) = (\gamma(t_1),\ldots,\gamma(t_k))$ has color $k$, and so $\gamma(\{t_i:1\le i\le k\}) = \{\gamma(t_i):1\le i\le k\}\in C$.  As $\gamma$ and $t$ are arbitrary, $\gamma\in\Aut(C)$ has color $k$.  Thus $\Aut(T)\le\Aut(C)$.  It is now clear that if for every transitive subgroup $H\le\Aut(C)$ with normal subgroup block system ${\cal B}$ has ${\cal B}$-fixing block system with one block, then the same is true for $\Aut(T)$.  Thus if $\Aut(C)$ is $9/8$-closed, then so $\Aut(T)$.
\end{proof}

The next result is \cite[Theorem 3.6]{Dobson2023apreprint}.  It gives a partial relationship between $5/2$-closed groups and automorphism groups of combinatorial objects.

\begin{thrm}\label{5/2 closed}
Let $T$ be a point-transitive colored $1$-intersecting tuple system.  Then $\Aut(T)$ is $5/2$-closed.
\end{thrm}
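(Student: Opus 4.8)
The plan is to verify the definition of a $5/2$-closed group directly for $G=\Aut(T)$, which is transitive since $T$ is point-transitive. So let $H\le G$ be transitive with a normal block system ${\cal B}$, let ${\cal E}$ be the ${\cal B}$-fixer block system of $H$, fix a block $E\in{\cal E}$, and let $g\in G$ fix setwise each block $B\in{\cal B}$ with $B\subseteq E$; the goal is to show $g|_E\in\Aut(T)$. Since ${\cal B}$ refines ${\cal E}$, the block $E$ is a union of blocks of ${\cal B}$, each fixed setwise by $g$, so $g(E)=E$; hence $g^{-1}$ also fixes setwise each block of ${\cal B}$ contained in $E$, and $(g|_E)^{-1}=g^{-1}|_E$. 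It therefore suffices to prove the one-directional statement that $g|_E$ carries every tuple of $T$ to a tuple of $T$ of the same color: applying this statement to $g^{-1}$ as well shows that $g|_E$ is a bijection which, together with its inverse, preserves tuples and colors, i.e. $g|_E\in\Aut(T)$.

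Fix $t=(t_1,\dots,t_k)\in T$ of color $c$, and let $C=\{t_1,\dots,t_k\}$ be its coordinate set, a member of the $1$-intersecting set system $S$ corresponding to $T$; note $g|_E(t)$ is again a $k$-tuple. Since every block of ${\cal B}$ is contained in $E$ or disjoint from it, there are three cases. If $C\subseteq E$ then $g|_E(t)=g(t)\in T$ has color $c$ because $g\in\Aut(T)$. If $C\cap E=\emptyset$ then $g|_E$ fixes every coordinate of $t$, so $g|_E(t)=t\in T$ has color $c$. The substantive case is that $P:=C\cap E$ and $Q:=C\setminus E$ are both nonempty.

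In that case I would use wreath stabilizers of blocks lying outside $E$. Pick $q\in Q$; let $B_q\in{\cal B}$ be its block and $E_q\in{\cal E}$ the ${\cal B}$-fixer block with $B_q\subseteq E_q$, so $B_q\cap E=\emptyset$ and $E_q\ne E$. Put $W=\WStab_H(B_q)\le\fix_H({\cal B})\le G$. By the properties of wreath stabilizers recalled above, $W$ fixes every block of ${\cal B}$; $W^{B_q}=1$, so $W$ fixes $q$; and for every block $B\in{\cal B}$ with $B\not\subseteq E_q$ --- in particular for every block $B\subseteq E$ --- we have $B\not\equiv B_q$, hence $W^B$ is transitive. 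Applying the elements of $W$ to $t$ produces tuples $\xi(t)\in T$ of color $c$, all with $q$ in the same coordinate position, and the $1$-intersecting hypothesis restricts their coordinate sets: two distinct members of $S$ meet in at most one point, so if $\xi(C)\ne C$ then $\xi(C)\cap C=\{q\}$, meaning $\xi$ moves every point of $C\setminus\{q\}$ off $C$. Combined with the transitivity of $W$ on each block of ${\cal B}$ inside $E$, this forces $C$ to meet the blocks of ${\cal B}$ in a very restricted pattern: either $C$ is $W$-invariant, in which case it contains every block of ${\cal B}$ it meets inside $E$, so $g(P)=P$ (as $g$ fixes those blocks setwise) and $g|_E(t)$ shares its coordinate set with a tuple in the $W$-orbit of $t$; or the nontrivial $W$-translates of $C$ are pairwise almost disjoint, which pins down a rigid correspondence among the blocks of ${\cal B}$ met by $C$. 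In the latter situation, since $g$ fixes each block $B\subseteq E$ setwise it must respect that correspondence, and one realizes $g|_E(t)$ as the image of $t$ under a product of elements taken from wreath stabilizers of blocks outside $E$ --- elements that slide the coordinates of $t$ lying in $E$ onto their $g$-images while returning the coordinates in $Q$ to their original positions. In all cases $g|_E(t)\in T$ with color $c$, which completes the proof.

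The step I expect to cost the most effort is this final one: identifying $g|_E(t)$ with a tuple already known to lie in $T$ in the straddling case --- that is, reconciling the action of $g$ on the coordinates of $t$ inside $E$ with the permutations available from the wreath stabilizers of blocks outside $E$, while simultaneously restoring the coordinates of $t$ lying in $Q$. This is exactly where the $1$-intersecting hypothesis is indispensable: it forbids two distinct tuples of $T$ from agreeing in two coordinates, which rigidifies how $C$ sits across the blocks of ${\cal B}$ and forces $g$, constrained only by fixing the blocks of ${\cal B}$ inside $E$, to be compatible with that structure; without it, $g$ could realign coordinates outside $E$ freely and $g|_E$ need not be an automorphism.
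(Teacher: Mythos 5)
First, a caveat: the paper does not actually prove this statement --- it is quoted verbatim from \cite[Theorem 3.6]{Dobson2023apreprint}, so there is no in-paper proof to compare yours against. Judged on its own terms, your proposal sets things up correctly (the reduction to showing $g|_E$ sends each tuple to a tuple of the same color, the trivial cases $C\subseteq E$ and $C\cap E=\emptyset$, and the identification of $W=\WStab_H(B_q)$ as the relevant tool, with $W$ fixing $q$ and $W^B$ transitive for blocks $B\subseteq E$), but the one case that carries the entire content of the theorem --- a tuple whose coordinate set $C$ straddles $E$ --- is not actually proved. Two concrete problems. (1) In your first branch ($C$ is $W$-invariant, hence $P=C\cap E$ is a union of blocks and $g|_E(C)=C$), you conclude that $g|_E(t)$ ``shares its coordinate set with a tuple in the $W$-orbit of $t$.'' That is not the statement you need: $T$ is a system of \emph{ordered} tuples, and having the same coordinate set as a member of $T$ does not place the permuted tuple $g|_E(t)$ in $T$ (an arc $(u,v)$ of a digraph does not force $(v,u)$ to be an arc). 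You must exhibit an element of $\Aut(T)$ that agrees with $g|_E$ on $C$, and nothing in the argument produces one. (2) In the second branch, the claims that the almost-disjointness of the $W$-translates ``pins down a rigid correspondence'' and that $g|_E(t)$ can be ``realized as the image of $t$ under a product of elements taken from wreath stabilizers of blocks outside $E$'' are asserted, not derived. In particular, an element of $\WStab_H(B_q)$ fixes only the part of $Q=C\setminus E$ lying in the class $E_q$; it acts transitively on blocks of every \emph{other} class, so if $Q$ meets two or more classes the elements you propose to multiply together scramble $Q$, and ``returning the coordinates in $Q$ to their original positions'' is exactly the unresolved difficulty. Likewise, transitivity of $W^{B_p}$ on each block lets you move a single $p\in P$ anywhere in its block, but gives no control over the simultaneous map $p\mapsto g(p)$ on all of $P$ at once.

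In short, you have correctly located where the $1$-intersecting hypothesis must enter (distinct $W$-translates of $C$ through $q$ meet only in $q$), but the step from that rigidity to the membership $g|_E(t)\in T$ is the theorem, and it is left as a gesture. To close the gap one needs an actual structural analysis of how a tuple can meet several $\equiv$-classes (e.g., showing that $C$ either contains whole blocks outside one class or meets each such block in at most one point, and in each configuration constructing an explicit automorphism realizing $g|_E$ on $C$), which is the substance of the proof in the cited reference.
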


We next seek partial relationships between $9/8$-closed groups and automorphism groups of combinatorial objects.  The next definition gives the largest class of combinatorial objects for which we can show that all objects in the class have automorphism groups that are $9/8$-closed.

\begin{definition}
A {\bf partial Sylvester-Gallai design} is an incidence relation $(P,L,{\cal I})$ such that
\begin{enumerate}
\item any two points are contained on at most one line, and
\item each line has at least three points.
\end{enumerate}
\end{definition}

Sylvester-Gallai designs were introduced by Kelly and Nwankpa \cite{KellyN1973} as generalizations of Sylvester-Gallai configurations, and have the additional property that any two points determine a line.

\begin{definition}
A colored tuple system is {\bf connected} if its corresponding set system $C$ is connected.  That is, for any two vertices $u,v$ of the corresponding set system, there is a sequence of sets $e_1,\ldots,e_r$ in $C$ such that $u\in e_1$, $v\in e_r$, and $e_i\cap e_{i + 1}\not = \emptyset$, $1\le i\le r-1$.
\end{definition}

\begin{thrm}\label{9/8 object}
Let $T$ be a colored tuple system whose set system $C$ corresponding to $T$ is a connected partial Sylvester-Gallai design with point-transitive automorphism group. Then $\Aut(T)\le\Aut(C)$ are both $9/8$-closed.
\end{thrm}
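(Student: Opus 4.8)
The plan is to reduce to showing that $\Aut(C)$ is $9/8$-closed, since then $\Aut(T)$ is $9/8$-closed by Lemma \ref{Aut T and C}. For this, first identify $C$ with a point-transitive colored $1$-intersecting tuple system having the same automorphism group (replace each line by the set of all of its orderings, coloured by the cardinality of the line, exactly as configurations are identified with tuple systems). Theorem \ref{5/2 closed} then gives that $\Aut(C)$ is $5/2$-closed, so it only remains to check the extra condition in the definition of $9/8$-closed, namely that ${\cal E}_{H,{\cal B}} = \{\Z_n\}$ for every transitive $H\le\Aut(C)$ and every normal block system ${\cal B}$ of $H$.

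The next step is the observation that ${\cal E}_{H,{\cal B}} = \{\Z_n\}$ is equivalent to $\WStab_H(B) = 1$ for all $B\in{\cal B}$: if ${\cal E}_{H,{\cal B}}\neq\{\Z_n\}$ there are at least two $\equiv$-classes, hence $B\not\equiv B'$ for some blocks, hence $\WStab_H(B)^{B'}$ is transitive and $\WStab_H(B)\neq 1$; conversely, if some $W := \WStab_H(B)\neq 1$, then since $W^B = 1$ the group $W$ is nontrivial on some other block $B'$, forcing $B\not\equiv B'$. So, for contradiction, suppose $W := \WStab_H(B)\neq 1$. Then all ${\cal B}$-blocks have size at least $2$, and if $F$ is the block of ${\cal E}_{H,{\cal B}}$ containing $B$, then every ${\cal B}$-block in $F$ is $\equiv$-equivalent to $B$, so $W$ fixes $F$ pointwise, while every ${\cal B}$-block outside $F$ is $\not\equiv B$, so $W$ acts transitively on it; in particular $\emptyset\neq F\subsetneq X$. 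Pick an ${\cal E}$-block $F_0\neq F$ on which $W$ acts nontrivially (possible since $W\neq 1$ fixes $F$ pointwise). Since $\Aut(C)$ is $5/2$-closed and each element of $W\le\fix_H({\cal B})$ fixes every ${\cal B}$-block setwise, restriction to $F_0$ carries each element of $W$ into $\Aut(C)$; write $W_0$ for the image. Then $W_0\le\Aut(C)$ is nontrivial, fixes $X\setminus F_0$ pointwise, fixes each ${\cal B}$-block contained in $F_0$ setwise, and acts transitively on each such block (which has size at least $2$). We now seek a contradiction from the geometry of $C$.

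Because $C$ is connected, has at least two points, and has all lines of size at least $3$ yet strictly smaller than $|X|$, no line covers $X$; since $\Aut(C)$ is point-transitive the number $d$ of lines through a point is constant, and $d\geq 2$ (if $d=1$ then $C$ is a disjoint union of lines, so by connectedness a single line, impossible). Suppose first that $W_0$ fixes every line of $C$ setwise. Take a point $q\in F_0$, lying in a ${\cal B}$-block $B_q$ on which $W_0$ is transitive. For any line $\ell$ through $q$, the $W_0$-orbit of $q$ is all of $B_q$ and is contained in $\ell$, so $B_q\subseteq\ell$; taking two distinct lines through $q$ (which exist as $d\geq 2$) puts $B_q$ inside their intersection, contradicting that $C$ is $1$-intersecting while $|B_q|\geq 2$.

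In the remaining case, some line $\ell_\ast$ is moved by some $w\in W_0$. Then $\ell_\ast$ and $w(\ell_\ast)$ are distinct, hence share at most one point, so $w$ fixes at most one point of $\ell_\ast$; as $w$ fixes $X\setminus F_0$ pointwise, $\ell_\ast$ meets $X\setminus F_0$ in at most one point and therefore has at least two points in $F_0$. The plan from here is to push this against the transitivity of $W_0$ on the ${\cal B}$-blocks of $F_0$: a ${\cal B}$-block of $F_0$ entirely contained in a line $\ell$ forces $W_0$ to fix $\ell$ setwise (a translate would otherwise share a whole block with $\ell$), while a ${\cal B}$-block meeting a $W_0$-moved line in a single point is, under a suitable element of $W_0$, carried entirely off that line away from its at most one point outside $F_0$; combining these with connectedness (to locate lines joining $F_0$ to its complement, and along which to propagate), with $|\ell|\geq 3$, and with point-transitivity, one shows that no ${\cal B}$-block of $F_0$ can lie in a line not setwise fixed by $W_0$, and then the first case's argument applies, giving the contradiction. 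Making this propagation airtight is the main obstacle; the first case above indicates the governing mechanism — two lines through a common point cannot both absorb an entire block — but it must be delivered through lines that $W_0$ genuinely moves.
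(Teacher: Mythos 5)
Your overall framework is the same as the paper's: reduce to $\Aut(C)$ via Lemma \ref{Aut T and C}, identify $C$ with a $1$-intersecting colored tuple system so that Theorem \ref{5/2 closed} gives $5/2$-closedness of $\Aut(C)$, reformulate the $9/8$-condition as $\WStab_H(B)=1$ for every block $B$, and use $5/2$-closedness to restrict a nontrivial wreath stabilizer to a single $\equiv$-class, aiming at a contradiction with the $1$-intersecting property. All of that matches the paper, and your remark that every line has fewer than $\vert X\vert$ points (forced by the bound $n<\vert X\vert$ in Definition \ref{tuple def}) correctly rules out the one-line degeneracy. However, the argument is not complete: the case you call ``the remaining case'' is exactly where the content of the theorem lies, and you leave it as a plan whose key propagation step you yourself describe as not airtight. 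That is a genuine gap, not a routine verification.

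The idea you are missing is that the contradiction is obtained by playing \emph{two} restricted automorphisms against one line, not by propagating a single restricted group $W_0$ inside its class $F_0$. The paper proceeds as follows. Since ${\cal E}$ has at least two blocks and $C$ is connected, there is a line $L$ meeting two distinct $\equiv$-classes, say $x\in B\subseteq E$ and $y\in B'\subseteq E'$ with $B\not\equiv B'$. Taking $w$ to be the restriction to $E'$ of a suitable element of $\WStab_G(B)$ moving $y$ (this is your $W_0$), the $1$-intersecting property forces $L$ to have at most one point outside $E'$, namely $x$; since $\vert L\vert\ge 3$, $L$ then contains two points $y,z\in E'$. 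Now comes the step absent from your proposal: because $\fix_G({\cal B})^B$ is transitive and $\Aut(C)$ is $5/2$-closed, there is $v\in\Aut(C)$ with $v(x)\ne x$ and $v$ trivial on $E'$ (a restriction in the ``opposite direction''). Applying $v$ to $L$ produces either a second line through the pair $\{y,z\}$ or a second point of $L$ outside $E'$, and either outcome is a contradiction. Your proposal never introduces this second element that fixes $F_0$ pointwise while moving the unique crossing point of a line into $F_0$, and that is precisely why your attempt to finish by ``propagating'' through lines moved by $W_0$ does not close. (Your first case, where $W_0$ fixes every line setwise, is correct, but it is not where the difficulty sits.)
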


\begin{proof}
By Lemma \ref{Aut T and C} it suffices to show that $\Aut(C)$ is $9/8$-closed. Let $C$ satisfy the hypothesis, with $G\le\Aut(C)$ transitive with a normal block system ${\cal B}$.  Towards a contradiction, suppose $\WStab_G(B)\not = 1$ for some $B\in{\cal B}$ with $B'\in{\cal B}$ such that $\WStab_G(B)^{B'}$ is transitive.   Then the ${\cal B}$-fixing block system of $G$ has at least two blocks.  As $C$ is connected, there is some line with points in two different blocks of ${\cal E}$.   Let $E$ be the equivalence class of $\equiv$ that contains $B$ and $E'$ the equivalence class of $\equiv$ that contains $B'$.  We may assume without loss of generality (by relabeling if necessary) that some line of $C$ contains a point $x$ in $B$ and a point $y$ in $B'$.  Then there exists $w\in \WStab_G(B)$ such that $w(y)\not = y$ (and of course $w(x) = x$).

As $C$ is a $1$-intersecting set system $\Aut(C)$ is $5/2$-closed by Theorem \ref{5/2 closed}.  Then $w\vert_{E'}\in \Aut(C)$, and so we may assume without loss of generality that $w = w\vert_{E'}$.  As any two points are on at most one line, $L$ cannot contain any point other than $x$ that is not in $E'$.  As each line contains at least three points, there is some $x\not = z\not = y$ on $L$, and so $z\in E'$.  So $w(L)$ is a line that contains at least two points in $E'$, namely $y$ and $z$, and exactly one point not in $E'$, namely $x$.  As $\fix_G({\cal B})^B$ is transitive and $\Aut(C)$ is $5/2$-closed, there is $v\in\Aut(C)$ such that $v(x)\not = x$ but $v^{E'} = 1$.  Then $y$ and $z$ are contained on two different lines, a contradiction.
\end{proof}

We next give a sufficient condition to ensure that the automorphism group of a weakly connected vertex-transitive digraph is $9/8$-closed (we note that the automorphism group of disconnected vertex-transitive digraph is never $9/8$-closed as its automorphism group is a wreath product).  We will need a couple of definitions.

\begin{definition}
Let $G$ be a group, $H \leq G$, and $S \subset G$ such that $S\cap H = \emptyset$ and $HSH = S$. Define a digraph $\text{Cos}(G,H,S)$ with vertex set $V(\text{Cos}(G,H,S)) = G/H$ the set of left cosets of $H$ in $G$, and arc set $A(\text{Cos}(G,H,S)) = \{(gH,gsH):g\in G{\rm\ and\ }s\in S\}$. The digraph $\text{Cos}(G,H,S)$ is called the \textbf{double coset digraph of $G$} with \textbf{connection set $S$} (or $HSH$).
\end{definition}

Sabidussi has shown \cite[Theorem 2]{Sabidussi1964} that every vertex-transitive digraph is isomorphic to a double coset digraph (see \cite[Theorem 1.3.9]{Book} for a more modern proof of this result).

\begin{definition}
Let $n$ be a positive integer.  By $\vec{K}_{n,n}$ we mean the complete bipartite graph $K_{n,n}$ oriented so that every arc points from one bipartition set to the other.
\end{definition}

\begin{thrm}\label{graph 9/8 closed}
Let $G$ be a group, $H\le G$, and $S\subset G$ such that $S\cap H = \emptyset$ and $S = HSH$.  Let $n = [G:H]$ and $p$ the smallest prime divisor of $n$.  If $\Cos(G,H,S)$ is weakly connected and has no subdigraph isomorphic to a $\vec{K}_{p,p}$, then $\Aut(\Cos(G,H,S))$ is $9/8$-closed.  Consequently, if $\Cos(G,H,S)$ is a graph of girth at least $5$, then $\Aut(\Cos(G,H,S))$ is $9/8$-closed.
\end{thrm}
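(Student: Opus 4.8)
The plan is to reduce the theorem to Theorem \ref{9/8 object} by exhibiting a suitable $1$-intersecting set system whose automorphism group coincides with $\Aut(\Cos(G,H,S))$, and then to analyze the girth/forbidden-subdigraph hypothesis to verify the ``connected partial Sylvester-Gallai design'' condition. First I would recall that by Theorem \ref{9/8 object} it suffices to produce, on the vertex set $V = G/H$, a connected set system $C$ that is $1$-intersecting (any two points on at most one line) with each line of size at least three, such that $\Aut(C) = \Aut(\Gamma)$ where $\Gamma = \Cos(G,H,S)$; then $\Aut(\Gamma)$ is $9/8$-closed. So the whole argument is: (i) build $C$ from $\Gamma$, (ii) check $\Aut(C)=\Aut(\Gamma)$, (iii) check $C$ is a connected partial Sylvester-Gallai design, using weak connectivity and the absence of a $\vec K_{p,p}$.

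For step (i): I would first dispose of the case where $\Gamma$ has no arcs (impossible, since then $\Gamma$ is not weakly connected unless $n=1$, a trivial case), so $S\ne\emptyset$. The natural candidate for $C$ is obtained by looking at the out-neighborhoods (and in-neighborhoods) of $\Gamma$: for each vertex $v$ let the ``out-star'' at $v$ be $\{v\}\cup N^+(v)$ and the ``in-star'' be $\{v\}\cup N^-(v)$. However, two of these can share two points, so instead I expect the right construction is the one implicit in \cite[Example 3.7]{Dobson2023apreprint} and in the proof of Theorem \ref{9/8 object}: view the arc set $A(\Gamma)$ as a (colored) $1$-intersecting tuple system $T$ — as noted in the Example after Definition \ref{tuple def}, arcs of a digraph form a $1$-intersecting tuple system — and let $C$ be its corresponding set system, the underlying simple graph's edge set, so that by Lemma \ref{Aut T and C} we have $\Aut(\Gamma)=\Aut(T)\le\Aut(C)$. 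The subtlety is that $C$ (the underlying graph) has lines of size exactly $2$, which violates the ``at least three points'' requirement of a partial Sylvester-Gallai design. The resolution, and the technically interesting part, is that the girth/$\vec K_{p,p}$-free hypothesis lets us replace these size-$2$ lines by maximal ``paths'' or by neighborhoods that are forced to have size $\ge 3$, or — more likely — that the correct combinatorial object is not the edge set at all but a design whose lines are the out-neighborhoods $N^+(v)$ (or closed neighborhoods) of vertices: since $\Gamma$ is vertex-transitive and weakly connected with a $\vec K_{p,p}$-free (equivalently, in the graph case, $C_4$-free) structure, any two vertices lie in at most one common out-neighborhood, giving $1$-intersection.

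For steps (ii) and (iii): the girth hypothesis enters exactly here. If $\Gamma$ is a graph of girth at least $5$, then it has no $4$-cycle, hence no two vertices have two common neighbors, so the ``neighborhood'' lines are $1$-intersecting; and girth at least $5$ forces (together with vertex-transitivity and connectedness, excluding degenerate small cases) each neighborhood to contain at least... here one must be careful, since a vertex-transitive graph of girth $\ge 5$ could in principle have degree $2$, i.e.\ be a cycle $C_m$ with $m\ge 5$, whose neighborhoods have size $2$. I expect the paper handles $C_m$ separately (its automorphism group $D_m$ is easily seen to be $9/8$-closed by a direct check), and otherwise degree $\ge 3$ gives lines of size $\ge 3$. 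For the general double coset / $\vec K_{p,p}$-free statement, ``no $\vec K_{p,p}$'' with $p$ the smallest prime divisor of $n$ is the right generalization of ``girth $\ge 5$'': it guarantees that any two vertices have at most $p-1$ common out-neighbors, and combined with the block-structure arguments ($\WStab$ acting transitively on a block of prime size $\ge p$) this is exactly what is needed to run the contradiction in the style of the proof of Theorem \ref{9/8 object}. I would then verify $\Aut(C)=\Aut(\Gamma)$: clearly $\Aut(\Gamma)\le\Aut(C)$ since automorphisms preserve neighborhoods; conversely an automorphism of $C$ preserving the neighborhood-lines must preserve adjacency, hence lie in $\Aut$ of the underlying graph, and one recovers arc-directions from vertex-transitivity plus the connection-set structure. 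The main obstacle I anticipate is precisely this last identification together with the bookkeeping needed when $\Gamma$ is a digraph rather than a graph — ensuring that the chosen set system $C$ is genuinely $1$-intersecting (ruling out, via the $\vec K_{p,p}$-free hypothesis, that two lines meet in $\ge 2$ points) and that passing to $C$ loses no automorphisms — rather than the appeal to Theorem \ref{9/8 object}, which is then immediate.
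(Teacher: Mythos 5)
Your proposal takes a genuinely different route from the paper --- a reduction to Theorem \ref{9/8 object} via an auxiliary partial Sylvester-Gallai design built from neighborhoods --- but this reduction has gaps that I do not believe can be closed. First, $1$-intersection of the neighborhood system (two points on at most one common line $N^+(v)$, i.e.\ no two vertices with two common in-neighbors) is exactly $\vec K_{2,2}$-freeness, whereas the hypothesis only forbids $\vec K_{p,p}$ for $p$ the \emph{smallest prime divisor of $n$}; when $n$ is odd, $p\ge 3$ and a $\vec K_{p,p}$-free digraph can perfectly well contain a $\vec K_{2,2}$, so your lines can meet in two points. Your remark that the hypothesis ``guarantees at most $p-1$ common out-neighbors'' is not the condition a partial Sylvester-Gallai design needs. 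Second, the requirement that each line have at least three points forces out-degree at least $3$, and this fails for digraphs squarely covered by the theorem: the directed cycle $\vec C_n$ is weakly connected, vertex-transitive, and $\vec K_{p,p}$-free, yet every out-neighborhood is a singleton, so no connected partial Sylvester-Gallai design arises from your construction at all (and you cannot dismiss this as a ``degenerate case handled separately'' --- arbitrary out-degree-$1$ and out-degree-$2$ double coset digraphs occur). Third, you worry about proving $\Aut(C)=\Aut(\Gamma)$; you would only need $\Aut(\Gamma)\le\Aut(C)$ (since a transitive subgroup of a $9/8$-closed group is $9/8$-closed, as noted after the definition), but this does not rescue the first two problems.

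The paper's proof does not pass through Theorem \ref{9/8 object} at all; it is a short direct argument with wreath stabilizers, in the same spirit as the proof of Theorem \ref{9/8 object} but run on arcs rather than lines. Take $K\le\Aut(\Cos(G,H,S))$ transitive with a normal block system ${\cal B}$ and suppose $\WStab_K(B)\ne 1$, so $\WStab_K(B)^{B'}$ is transitive for some $B'\ne B$. Weak connectivity lets one choose such $B,B'$ with an arc $(gH,gsH)$, $gH\in B$, $gsH\in B'$. Transitivity of $\WStab_K(B)^{B'}$ (which fixes $gH$) forces arcs from $gH$ to every vertex of $B'$, and normality of ${\cal B}$ then spreads this to all of $B\times B'$, producing a $\vec K_{b,b}$ with $b=\vert B\vert\ge p$ since $b$ divides $n$ --- contradicting the hypothesis. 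The girth-$5$ statement follows because a $\vec K_{2,2}$ in a graph induces a $4$-cycle. This argument uses the $\vec K_{p,p}$-free hypothesis exactly where it is true (blocks have size at least $p$), rather than where your construction needs it (pairs of vertices, i.e.\ size $2$).
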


\begin{proof}
Let $K\le\Aut(\Cos(G,H,S))$ be transitive with a normal block system ${\cal B}$.  Suppose $\WStab_K(B)\not = 1$ for some $B\in{\cal B}$.  Then there is $B\not = B'\in{\cal B}$ such that $\WStab_K(B)^{B'}$ is transitive.  As $\Cos(G,H,S)$ is weakly connected, we may choose $B$ and $B'$ such that some arc $a = (gH,gsH)$ in $\Cos(G,H,S)$ has $gH\in B$ and $gsH\in B'$.  As $\WStab_K(B)^{B'}$ is transitive, the arc $(gH,kH)\in A(\Cos(G,H,S))$ for every left coset $kH$ of $H$ contained in $B'$.  As ${\cal B}$ is normal, we see that $(\ell H,kH)\in A(\Cos(G,H,S))$ for every left coset $\ell H$ in $B$ and $kH$ in $B'$.  Hence $\Cos(G,H,S)$ contains a $\vec{K}_{b,b}$, where $b = \vert B\vert$.  As the order of a block of $K$ divides $n$, we see that $b\ge p$.  Hence $\Cos(G,H,S)$ contains a subdigraph isomorphic to $\vec{K}_{p,p}$, a contradiction.  Thus $\WStab_K(B) = 1$.  As $K$ and ${\cal B}$ are arbitrary, $\Aut(\Cos(G,H,S))$ is $9/8$-closed.

In particular, if $\Cos(G,H,S)$ is a graph of girth at least $5$, then it has no subdigraph isomorphic to the cycle of length $4$, which is isomorphic to $K_{2,2}$.  As every prime divisor of $n$ is at least $2$, and any subdigraph of $\Cos(G,H,S))$ isomorphic to $\vec{K}_{2,2}$ induces a $K_{2,2}$ in $\Cos(G,H,S)$, we see by the previous argument that $\Aut(\Cos(G,H,S))$ is $9/8$-closed.
\end{proof}

We now give a preliminary result and some definitions in preparation for the main result of this section.  This main result will be used to solve the isomorphism problem for many circulant combinatorial objects we have seen.

\begin{lem}\label{product 9/8-closed}
Let $H\le S_m$ and $K\le S_k$ be transitive.  Let $G = H\times K$ with the canonical action of $H\times K$ on $\Z_m\times\Z_k$.  That is, $(h,k)(i,j) = (h(i),k(j))$ for $h\in H$, $k\in K$, $i\in\Z_m$, and $j\in\Z_k$.  If $G$ is $9/8$-closed, then $H$ and $K$ are $9/8$-closed.
\end{lem}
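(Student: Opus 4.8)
The plan is to argue the contrapositive for each factor: if $H$ is not $9/8$-closed then $G=H\times K$ is not $9/8$-closed, the argument for $K$ being symmetric (using $H\times L'$ in place of $L\times K$). The essential device is a way of lifting a ``witness'' inside $H$ to one inside $G$, and the crucial choice is to use the \emph{finest} block system available, not the coarsest.

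Concretely, suppose $L\le H$ is transitive with a normal block system ${\cal B}$, say ${\cal B}$ is the orbit set of $N\tl L$. Then $N\times 1\tl L\times K$, its orbit set ${\cal C}=\{B\times\{j\}:B\in{\cal B},\ j\in\Z_k\}$ is a normal block system of the transitive group $L\times K\le G$, and I would establish the two bookkeeping identities $\fix_{L\times K}({\cal C})=\fix_L({\cal B})\times 1$ and $\WStab_{L\times K}(B\times\{j\})=\WStab_L(B)\times 1$ for every $B\in{\cal B}$, $j\in\Z_k$. Both follow from the fact that $\tilde W\mapsto \tilde W\times 1$ is an inclusion-preserving bijection between the subgroups of $\fix_L({\cal B})$ and the subgroups of $\fix_{L\times K}({\cal C})$ under which, for each block $B'\times\{j'\}$ of ${\cal C}$, ``$\tilde W$ is transitive/trivial on $B'$'' corresponds exactly to ``$\tilde W\times 1$ is transitive/trivial on $B'\times\{j'\}$'' (the two permutation groups being isomorphic via $(i,j')\mapsto i$); so this bijection carries the maximal-with-that-property subgroup to the maximal-with-that-property subgroup. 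Consequently the ${\cal B}$-restricting equivalence relation of $L$ and the ${\cal C}$-restricting equivalence relation of $L\times K$ agree, with $B\times\{j\}\equiv B'\times\{j'\}$ iff $B\equiv B'$, independently of $j,j'$; whence ${\cal E}_{L\times K,{\cal C}}=\{E\times\Z_k:E\in{\cal E}_{L,{\cal B}}\}$, and in particular this block system is nontrivial whenever ${\cal E}_{L,{\cal B}}$ is.

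With the lifting in hand I split into the two ways $H$ can fail to be $9/8$-closed. If $H$ is not $5/2$-closed, pick $L$, ${\cal B}$, a block $E\in{\cal E}_{L,{\cal B}}$, and $g\in H$ fixing setwise every block of ${\cal B}$ inside $E$ but with $g\vert_E\notin H$. Then $(g,1)\in G$ fixes setwise every block of ${\cal C}$ inside $E\times\Z_k$, while $(g,1)\vert_{E\times\Z_k}=(g\vert_E,1)\notin H\times K=G$; so $G$ is not $5/2$-closed, hence not $9/8$-closed. If instead $H$ is $5/2$-closed but some transitive $L\le H$ has a normal block system ${\cal B}$ with ${\cal E}_{L,{\cal B}}\ne\{\Z_m\}$, then the previous paragraph gives the transitive subgroup $L\times K\le G$ a normal block system ${\cal C}$ with ${\cal E}_{L\times K,{\cal C}}\ne\{\Z_m\times\Z_k\}$, so $G$ is not $9/8$-closed. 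Either way the contrapositive holds, and applying the same reasoning to $K$ finishes the proof.

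The step I expect to be the main obstacle — really the only nontrivial point — is the identity $\WStab_{L\times K}(B\times\{j\})=\WStab_L(B)\times 1$, and in particular the realization that one must use the fine block system ${\cal C}$ rather than the coarse block system $\{B\times\Z_k:B\in{\cal B}\}$. For the coarse system the restriction of any element of $\fix_{L\times K}$ to a block $B'\times\Z_k$ preserves the partition into the sets $B'\times\{j\}$, so it can never act transitively there once $\abs{\Z_k}>1$; consequently the wreath stabilizer relative to the coarse system collapses to the identity and carries no information. It is precisely the fine system that makes the wreath stabilizers of $L$ and $L\times K$ correspond, after which everything else is routine verification.
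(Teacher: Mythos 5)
Your proposal is correct and takes essentially the same route as the paper: lift a normal block system ${\cal B}$ to the \emph{fine} block system $\{B\times\{j\}\}$ of the product (as the orbit set of $N\times 1$) and transfer wreath stabilizers via $W\mapsto W\times 1$, so that a nontrivial fixer block system downstairs forces one upstairs. If anything you are more thorough than the paper's own proof, which runs the argument only for normal block systems of $H$ itself (rather than of an arbitrary transitive subgroup $L\le H$) and does not separately address the $5/2$-closedness clause built into the definition of $9/8$-closed; your contrapositive covers both points.
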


\begin{proof}
We will show that $H$ is $9/8$-closed, the argument for $K$ being analogous.  Let ${\cal C}$ be a normal block system of $H$.  Let $L = \fix_H({\cal C})$.  Viewing $L$ as an internal subgroup of $H\times K$ acting canonically on $\Z_m\times \Z_k$, we see $L\tl G$ whose set of orbits ${\cal D}$ is a block system of $H\times K$.  As $H\times K$ acts canonically on $\Z_m\times\Z_k$, we see that a block of ${\cal D}$ has the form $C\times\{j\}$, where $C\in{\cal C}$ and $j\in\Z_k$.  Then $\fix_H({\cal C})\times 1_K\le \fix_{H\times K}({\cal D})$.
As $G$ is $9/8$-closed, the ${\cal D}$-fixing block system ${\cal E}_{\cal D}$ of $G$ is $\{\Z_m\times\Z_k\}$.  If the ${\cal C}$-fixing block system ${\cal E}_{\cal C}$ of $H$ is not $\Z_m$, with say $1\not = \ell\in\fix_H({\cal C}) = L$ such that $\ell\in\WStab_H(C)$ for some $C\in{\cal C}$, then $\ell\times 1_K$ is contained in $\WStab_{H\times K}(C\times\{j\})$ for every $j\in\Z_k$ and is not the identity.  Thus $\WStab_{H\times K}(C\times\{j\})\not = 1$, and $H$ is not $9/8$-closed, a contradiction.
\end{proof}

\begin{definition}
Let $G$ be a group.  A subgroup $H\le G$ is {\bf pronormal} in $G$ if for every $g\in G$ there is a $k\in\la H,g^{-1}Hg\ra$ such that $k^{-1}g^{-1}Hgk = H$.
\end{definition}

\begin{definition}
Let $\pi$ be a set of prime numbers.  A group $G$ is called a {\bf $\pi$-group} if every element in $G$ has order a product of powers of primes in $\pi$.
\end{definition}

So a group $G$ is a $\pi$-group if a prime $p$ divides the order of an element of $G$ only if $p\in\pi$.

\begin{definition}
Let $\pi$ be a set of primes, and $G$ a group of order $n$.  We say that $H\le G$ is a {\bf Hall $\pi$-subgroup} of $G$ if $H$ is a $\pi$-group and $\vert G\vert/\vert H\vert$ is relatively prime to $p$ for every $p\in \pi$.  That is, if $p^a\vert n$, $a\ge 0$, then $p^a$ divides $\vert H\vert$ for every $p\in\pi$.
\end{definition}

The next result is the main theoretical result of this section.  We note that if $G\le S_n$ contains a regular cyclic subgroup, then every block system of $G$ is normal \cite[Theorem 2.2.9]{Book}.  So in the following result, ``block system" is synonymous with ``normal block system".

\begin{thrm}\label{configuration result}
Let $R = \la x\ra\le S_n$ be a regular cyclic subgroup and $y$ a conjugate of $x$ in $S_n$.  Suppose that for every conjugate $y'$ of $y$ in $\la x,y\ra$ and every block system ${\cal B}$ of $\la x,y'\ra$, we have that the ${\cal B}$-fixing block system is $\{\Z_n\}$.  Then $\la x\ra$ and $\la y\ra$ are conjugate in $\la x,y\ra$.  Consequently, if $G\le S_n$ is transitive, $9/8$-closed, and contains a regular cyclic subgroup $R$, then $R$ is pronormal in $G$.
\end{thrm}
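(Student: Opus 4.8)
The plan is to proceed by induction on $\Omega(n)$, the number of prime factors of $n$ counted with multiplicity, using Lemma \ref{already known} to put $\la x,y\ra$ into a convenient normally $m$-step imprimitive form. First I would apply Lemma \ref{already known} to replace $y$ by a conjugate $\delta^{-1}y\delta$ in $\la x,y\ra$ so that $G_0 := \la x,\delta^{-1}y\delta\ra$ is normally $m$-step imprimitive with imprimitivity sequence ${\cal B}_0\prec{\cal B}_1\prec\cdots\prec{\cal B}_m$, is solvable, and sits inside the iterated wreath product of $\AGL(1,p_i)$'s described there. Since conjugacy of $\la x\ra$ and $\la y\ra$ in $\la x,y\ra$ is unchanged by this replacement (and the hypothesis on fixing block systems is stable under passing to conjugates of $y$ in $\la x,y\ra$, which is exactly why the theorem is stated for all such $y'$), it suffices to prove $\la x\ra$ and $\la\delta^{-1}y\delta\ra$ are conjugate in $G_0$.

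Next I would look at the minimal block system ${\cal B}_1$, of prime size $p := p_1$. The quotient $G_0/{\cal B}_1$ contains the two regular cyclic subgroups $\la x\ra/{\cal B}_1$ and $\la\delta^{-1}y\delta\ra/{\cal B}_1$ of $\Z_{n/p}$, and it inherits the hypothesis (every block system of $G_0/{\cal B}_1$ pulls back to a block system of $G_0$ above ${\cal B}_1$, and the ${\cal B}$-fixing block system behaves well under quotients). By the inductive hypothesis there is $g/{\cal B}_1 \in G_0/{\cal B}_1$ conjugating one to the other; lift $g$ to $G_0$ and replace $\delta^{-1}y\delta$ by its conjugate, so that now $\la x\ra/{\cal B}_1 = \la y'\ra/{\cal B}_1$, where $y'$ denotes the current conjugate of $y$. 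At this point $x^{-1}y' \in \fix_{G_0}({\cal B}_1)$. The crux is then to show $\fix_{G_0}({\cal B}_1)$ is small: precisely, that $\la x\ra$ and $\la y'\ra$ already generate a group in which $\fix({\cal B}_1)$ is cyclic of order $p$ generated by $x^{n/p}$ and central. This is exactly where the hypothesis that every ${\cal B}$-fixing block system equals $\{\Z_n\}$ is used — if $\fix_{G_0}({\cal B}_1)^B$ were nontrivial for some block $B$ while being trivial on another, we would get a wreath stabilizer witnessing a ${\cal B}_1$-fixing block system with more than one block, contradiction; combined with part \eqref{part 5} of Lemma \ref{already known} (and Lemma \ref{pimpernel found} to handle the $2$-group case that \eqref{part 5} excludes) this forces $\fix_{\la x,y'\ra}({\cal B}_{a_1})$ down to the central cyclic group $\la x^{n/p_1^{a_1}}\ra$.

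Once $\fix_{\la x,y'\ra}({\cal B}_1)$ is pinned down, I would argue that $x$ and $y'$ differ by an element of this tiny central subgroup in a way that can be absorbed: writing $y' = x^{n/p}{}^{j}\cdot x \cdot(\text{correction})$ and using centrality plus the coprimality structure of $n$ to extract a conjugating element of $\la x,y'\ra$ (a standard argument: a central semiregular element of prime order together with a regular cyclic group of which it is a power forces the two regular cyclic groups to coincide after one more conjugation). This yields $\la x\ra = \la y'\ra$, hence $\la x\ra$ and $\la y\ra$ conjugate in $\la x,y\ra$. For the final sentence, suppose $G$ is transitive, $9/8$-closed, with regular cyclic $R = \la x\ra$, and let $g\in G$; set $y = g^{-1}xg$, so $\la y\ra\le G$ is a regular cyclic subgroup. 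Any block system ${\cal B}$ of $\la x,y'\ra$ for a conjugate $y'$ of $y$ in $\la x,y\ra$ is a normal block system (as $\la x,y'\ra$ contains the regular cyclic $\la x\ra$), and since $\la x,y'\ra$ is a transitive subgroup of the $9/8$-closed group $G$, its ${\cal B}$-fixing block system is $\{\Z_n\}$ by definition of $9/8$-closed. Thus the hypothesis of the theorem holds, so $\la x\ra$ and $\la y\ra = g^{-1}\la x\ra g$ are conjugate in $\la x,y\ra\le G$; that is, there is $k\in\la R,g^{-1}Rg\ra$ with $k^{-1}g^{-1}Rgk = R$, which is precisely pronormality of $R$ in $G$.

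The main obstacle I anticipate is the middle step: controlling $\fix_{\la x,y'\ra}({\cal B}_1)$ and descending cleanly through the $p_1$-part. The wreath-stabilizer bookkeeping (ensuring the ${\cal B}$-fixing block system being $\{\Z_n\}$ really does force the fixer to act faithfully-or-trivially in the rigid way needed) is delicate, and the prime $2$ requires the separate input of Lemma \ref{pimpernel found} since it falls outside part \eqref{part 5} of Lemma \ref{already known}; reconciling these two cases into one uniform argument is where the real work lies.
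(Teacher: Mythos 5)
Your overall strategy — normalize $\la x,y\ra$ via Lemma \ref{already known}, induct, and use Lemma \ref{already known}(\ref{part 5}) together with Lemma \ref{pimpernel found} to pin down a central cyclic Sylow $p_1$-subgroup $P=\la x^{n/p_1^{a_1}}\ra$ that must lie in $\la y\ra$ — is the right one, and your argument for the ``consequently'' clause is exactly correct. But the inductive engine you propose has a genuine gap. You induct on $\Omega(n)$ by passing to the quotient $G_0/{\cal B}_1$ and assert that the hypothesis (every ${\cal B}$-fixing block system is $\{\Z_n\}$) ``behaves well under quotients.'' It does not, at least not for free: $\WStab_{H/{\cal B}_1}(C/{\cal B}_1)$ consists of elements of $\fix_{H/{\cal B}_1}({\cal C}/{\cal B}_1)$ that merely fix each block of ${\cal B}_1$ inside $C$ setwise, which is strictly weaker than fixing $C$ pointwise; so wreath stabilizers, and hence fixer block systems, can grow when you quotient, and the quotient of a group satisfying the hypothesis need not satisfy it. This is precisely the difficulty the paper's proof is engineered to avoid: it inducts on the number of \emph{distinct} primes, uses Hall's theorem on the solvable quotient $\la x,y\ra/{\cal B}_{a_1}$ to force a Sylow $p_1$-subgroup into $\fix_H({\cal B}_{a_1})$, identifies it as the central $P$ via part (\ref{part 5}) and Lemma \ref{pimpernel found}, and then invokes Burnside's transfer theorem to obtain an \emph{internal direct product} $\la x,y\ra\cong K\times P$. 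The induction hypothesis is then applied to the direct factor $K$, whose $9/8$-closedness is supplied by Lemma \ref{product 9/8-closed} — a statement specifically about direct products in product action, not about quotients. Your proposal never produces such a decomposition and so has no substitute for this step.

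A second, related problem sits in your middle step. After conjugating so that $\la x\ra/{\cal B}_1=\la y'\ra/{\cal B}_1$, you need $\fix_{\la x,y'\ra}({\cal B}_1)=\la x^{n/p_1}\ra$ to absorb the difference between $x$ and $y'$. But Lemma \ref{already known}(\ref{part 5}) is a statement about $\fix({\cal B}_{a_1})$ for the \emph{particular} conjugate $\delta^{-1}y\delta$ it produces; the additional conjugation you perform to align the quotients need not preserve the normal $m$-step imprimitive form, and re-applying Lemma \ref{already known} to restore it would in general undo the alignment. The paper breaks this circularity by establishing $\la x,y\ra=K\times P$ with $P$ central \emph{before} invoking induction, so that the conjugation supplied by the inductive step takes place inside $K$ and cannot disturb $P$; semiregularity and centrality of $P$ then force $P\le\la y\ra$ and hence $\la y\ra=\la x\ra$. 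To repair your proof you would either need to prove the quotient-descent of the hypothesis (which, given that the paper only records a \emph{sufficient} condition for quotients of $5/2$-closed groups to be $5/2$-closed, appears to be false in general) or reorganize the induction along the paper's Hall--Burnside lines.
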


\begin{proof}
Let $R = \la x\ra$ and $\la y\ra$ be regular cyclic subgroups that satisfy the hypothesis of the result.  We may assume without loss of generality that $y$ has been conjugated so as to satisfy the conclusion of Lemma \ref{already known}.  We will also use all of the notation from both the statement and conclusion of Lemma \ref{already known}.  Set $H = \la x,y\ra$.

We proceed by induction on $r$, the number of distinct prime factors of $n$.  If $r = 1$, then $n$ is a prime power. Suppose that $\fix_H({\cal B}_1)$ has order at least $p^2$.  By \cite[Lemma 1.24]{Dobson2023apreprint} $\WStab_H({\cal B}_1) = \PStab_{\fix_H({\cal B})}(B)$ has order at least $p$, where $B\in{\cal B}$ and $\PStab_{\fix_H({\cal B})}(B)$ is point-wise stabilizer of $B$ in $\fix_H({\cal B})$.  But then the ${\cal B}_1$-fixing equivalence class ${\cal E}_1$ of $G\ge H$ is not $\{\Z_n\}$, a contradiction.  Thus by either Lemma \ref{already known} if $n$ is odd and Lemma \ref{pimpernel found} if $n$ is even, we see that $G$ is not $9/8$-closed, a contradiction which establishes the result.

Now assume that the result holds for all $n$ with $r\ge 1$.  Let $n$ be a positive integer such that $n$ has $r+1$ distinct prime divisors, and $y\in S_n$ generate a regular cyclic subgroup such that $\la x,y\ra$ satisfies the conclusion of Lemma \ref{already known}.  As $\la x,y\ra$ is solvable, $\la x,y\ra/{\cal B}_{a_1}$ is also solvable.  Let $\pi = \{p_i:2\le i\le r + 1\}$.  Then $\la x\ra/{\cal B}_{a_1}$ and $\la y\ra/{\cal B}_1$ are solvable of degree relatively prime to $p_1$, and so $\la x\ra/{\cal B}_{a_1}$ and $\la y\ra/{\cal B}_{a_1}$ are contained in $\pi$-subgroups $\Pi_1$ and $\Pi_2$ of $\la x,y\ra/{\cal B}_{a_1}$, respectively, by Hall's Theorem \cite[Proposition 7.14]{Hungerford1980}. Again by Hall's Theorem, as $\la x,y\ra/{\cal B}_{a_i}$ is solvable, after conjugation of $y$ by an element of $\la x,y\ra$, we may assume without loss of generality that $\Pi_2 = \Pi_1$ and so $\la x,y\ra/{\cal B}_{a_1}$ is a $\pi$-subgroup.  We conclude that a Sylow $p_1$-subgroup of $\la x,y\ra$ is contained in $\fix_H({\cal B}_{a_1})$.

As the ${\cal B}_1$-fixing block system of $\la x,y\ra$ is, by hypothesis, $\{\Z_n\}$, we see that a Sylow $p_1$-subgroup of $\fix_H({\cal B}_1)$ is cyclic.  By Lemma \ref{already known} (\ref{part 5}) if $p_1$ is odd or Lemma \ref{pimpernel found} if $p_1 = 2$, we have that $P = \la x^{n/p_1^{a_1}}\ra = \la y^{n/p_1^{a_1}}\ra$ is a Sylow $p_1$-subgroup of $\la x,y\ra$ and $P$ is central in $H$.  In particular, $P$ is contained in the center of its normalizer in $H$.

We now apply Burnside's Transfer Theorem \cite[Theorem 8.2.10]{Book} to obtain that $P$ contains a normal $p_1'$-complement in $\la x,y\ra$, which we will call $K$.  Then $K,P\tl \la x,y\ra$, $\la x,y\ra = \la K,P\ra$, and as $P$ and $K$ have relatively prime orders, $K\cap P = 1$.  Thus $\la x,y\ra\cong K\times P$.  By Lemma \ref{product 9/8-closed}, we see that $K$ is $9/8$-closed.  Hence as $K$ has degree $n/p_1^{a_1}$ with $r$ distinct factors, we may assume by the induction hypothesis, after conjugation of $\la y\ra$ by an appropriate element of $\la x,y\ra$, that $\la x,y\ra/{\cal B}_{a_1} = \la x\ra/{\cal B}_{a_1}$.  As $P$ is semiregular and contained in the center of $\la x,y\ra$, we have $P$ is contained in $\la y\ra$, and so $\la y\ra = \la x\ra$.  We have shown that $\la x\ra$ and $\la y\ra$ are conjugate in $\la x,y\ra$.  That is, that $\la x\ra$ is a pronormal subgroup of $\la x,y\ra$.

The ``consequently" part of the result now follows easily.  Let $G\le S_n$ be $9/8$-closed, $R$ a regular cyclic subgroup of $G$, and $g\in G$.  Set $R = \la x\ra$, and $y = g^{-1}xg$.  If $G$ is $9/8$-closed, then for every conjugate $y'$ of $y$ in $\la x,y\ra$ and normal block system ${\cal B}$ of $\la x,y'\ra$, the ${\cal B}$ fixing block system of $\la x,y'\ra$ is $\{\Z_n\}$.  By the first part of this result, we see that $\la y\ra$ is conjugate to $\la x\ra$ in $\la x,y\ra$.  Thus $R = \la x\ra$ is pronormal in $G$.
\end{proof}

Combining the previous result with Lemma \ref{CI} we obtain the following result.

\begin{cor}\label{set system CI}
Let $T$ be a connected circulant colored tuple system of order $n$ with corresponding set system $S$ such that $\Aut(S)$ is $9/8$-closed.  Then $T$ is a CI-colored tuple system of $\Z_n$.
\end{cor}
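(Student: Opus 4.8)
The plan is to deduce Corollary \ref{set system CI} directly from Theorem \ref{configuration result} and Babai's characterization (Lemma \ref{CI}), together with Lemma \ref{Aut T and C}. First I would set up the relevant permutation groups: write $\Z_n$ for the vertex set, let $R = (\Z_n)_L$ be the regular cyclic group, and recall that $R \le \Aut(T) \le \Aut(S)$, where the second containment is exactly Lemma \ref{Aut T and C}. The hypothesis gives that $\Aut(S)$ is $9/8$-closed. Since a transitive subgroup of a $9/8$-closed group is again $9/8$-closed (remarked just after the definition of $9/8$-closed groups), I would want $\Aut(T)$ itself to be $9/8$-closed: indeed Lemma \ref{Aut T and C} already records that if $\Aut(S)$ is $9/8$-closed then so is $\Aut(T)$. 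So $G := \Aut(T)$ is a transitive $9/8$-closed group containing the regular cyclic subgroup $R$.

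Next I would invoke the ``consequently'' part of Theorem \ref{configuration result}: a regular cyclic subgroup of a transitive $9/8$-closed group is pronormal, and in fact any two regular cyclic subgroups are conjugate in the group. Concretely, suppose $\phi \in S_{\Z_n}$ with $\phi^{-1} R \phi \le \Aut(T) = G$. Both $R$ and $\phi^{-1} R \phi$ are regular cyclic subgroups of $G$. Setting $x$ with $\la x \ra = R$ and $y = \phi^{-1} x \phi$, the group $\la x, y\ra \le G$ is $9/8$-closed (being a transitive subgroup of $G$), so every conjugate $y'$ of $y$ in $\la x,y\ra$ and every (normal) block system $\cal B$ of $\la x, y'\ra$ has trivial $\cal B$-fixing block system $\{\Z_n\}$ — this is precisely the hypothesis needed to run the first part of Theorem \ref{configuration result}. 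Therefore $\la y\ra = \phi^{-1} R \phi$ is conjugate to $R = \la x\ra$ in $\la x, y \ra \le \Aut(T)$, hence a fortiori conjugate in $\Aut(T)$.

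Finally I would translate this into the CI statement via Lemma \ref{CI}. Condition (\ref{Ciso2}) of that lemma — that whenever $\phi^{-1} G_L \phi \le \Aut(X)$ one has $G_L$ and $\phi^{-1}G_L\phi$ conjugate in $\Aut(X)$, with $G_L = R$ and $X = T$ — is exactly what the previous paragraph establishes. By the equivalence in Lemma \ref{CI}, $T$ is a CI-object of $\Z_n$ in the class of colored tuple systems, i.e.\ a CI-colored tuple system of $\Z_n$, which is the assertion.

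The only genuine point requiring care is verifying that the hypotheses of Theorem \ref{configuration result} transfer: one must check that the ``for every conjugate $y'$ of $y$ in $\la x, y\ra$ and every block system $\cal B$ of $\la x, y'\ra$'' condition really does follow from $\Aut(T)$ being $9/8$-closed. This is immediate once one observes that $\la x, y'\ra$ is a transitive subgroup of the $9/8$-closed group $\Aut(T)$ and hence is itself $9/8$-closed, so by definition all its block systems have trivial fixing block system; and since $\Aut(T)$ contains the regular cyclic subgroup $R$, all block systems of its subgroups containing a regular cyclic group are normal, so there is no gap between ``block system'' and ``normal block system.'' Everything else is bookkeeping with the definitions of Cayley object and CI-object and the inclusion $R \le \Aut(T) \le \Aut(S)$.
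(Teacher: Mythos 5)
Your proposal is correct and follows exactly the route the paper intends: the paper derives this corollary simply by ``combining'' Theorem \ref{configuration result} with Lemma \ref{CI}, and you have filled in the details faithfully, including the genuinely necessary observation that one must invoke the \emph{first} part of Theorem \ref{configuration result} (applied to $y=\phi^{-1}x\phi$ for an arbitrary $\phi\in S_n$, whose hypotheses hold because $\la x,y'\ra$ is a transitive subgroup of the $9/8$-closed group $\Aut(T)$ and all its block systems are normal) rather than only the pronormality conclusion, since $\phi$ need not lie in $\Aut(T)$.
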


There is a slight and subtle gap in the proof of \cite[Theorem 1.1]{KoikeKMM2019}.  Namely, in $\S 2$, they discuss how the isomorphism problem for configurations reduces to the connected case, as it is clear that if one has an isomorphism between connected components of two isomorphic disconnected set systems, then one can write down an isomorphism between the two set systems.  However, if the set system is a Cayley set system of $G$, then the set of components will be the set of left cosets of some subgroup $H\le G$ \cite[Example 2.3.8]{Book}.  If $H$ is a CI-group with respect to the particular set system one is considering, such as configurations, in order to show $G$ is a CI-group as well, automorphisms of $H$ must extend to automorphisms of $G$.  In general, though, automorphisms of some subgroup $H\le G$ need not extend to automorphisms of $G$.  The final result of this section generalizes \cite[Theorem 1.1]{KoikeKMM2019} (as a symmetric configuration is a partial Sylvester-Gallai design) and fills the small gap in their proof.  We need one more natural definition.

\begin{definition}
Let $T$ be a disconnected colored tuple system.  A {\bf component of $T$} is the set of all tuples in $T$ all of whose coordinates are contained in the vertex set of a component of the underlying set system of $T$.
\end{definition}

Note that a partial Sylvester-Gallai design that has a line must have at least $3$ points.

\begin{cor}
Let $n\ge 3$ be an integer.  Then $\Z_n$ is a CI-group with respect to colored tuple systems whose underlying set systems are partial Sylvester-Gallai designs.
\end{cor}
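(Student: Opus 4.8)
The plan is to split according to whether the underlying set system is connected: dispatch the connected case with the tools already assembled, then reduce the disconnected case to it --- the reduction being precisely where the gap in \cite{KoikeKMM2019} is filled.

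\emph{Connected case.} Let $T$ be a circulant colored tuple system of $\Z_n$ whose underlying set system $S$ is a partial Sylvester--Gallai design, and suppose $S$ is connected. Since $(\Z_n)_L\le\Aut(T)\le\Aut(S)$ by Lemma~\ref{Aut T and C}, $S$ is point-transitive, so $S$ is a connected partial Sylvester--Gallai design with point-transitive automorphism group; hence $\Aut(S)$ is $9/8$-closed by Theorem~\ref{9/8 object}, and Corollary~\ref{set system CI} gives at once that $T$ is a CI-colored tuple system of $\Z_n$.

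\emph{Disconnected case.} Suppose now that $T$ and $T'$ are circulant colored tuple systems of $\Z_n$ whose underlying set systems $S,S'$ are partial Sylvester--Gallai designs, that $T\cong T'$, and that $S$ (hence $S'$) is disconnected. Because $(\Z_n)_L\le\Aut(S)$, the components of $S$ are the left cosets of a subgroup $H\le\Z_n$ by \cite[Example 2.3.8]{Book}, and the component containing $0$ is $H$ itself; likewise for $S'$. As $\Z_n$ is cyclic, it has a unique subgroup of each order, so the components of $S'$ are the cosets of the \emph{same} $H$. An isomorphism $T\to T'$ sends $H$ to a coset $a+H$; composing it with the translation $(-a)_L\in(\Z_n)_L\le\Aut(T')$, I may assume it is an isomorphism $\phi$ with $\phi(H)=H$. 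If $|H|=1$, then $S$ has no edges, so $T$ has no tuples (each tuple of $T$ spans a line of $S$, which has at least three points), whence $T=T'=\emptyset$ and the identity works; so assume $t:=|H|\ge 3$. Now $\phi$ restricts to an isomorphism $T|_H\to T'|_H$ of the restrictions to the component $H$, and each of $T|_H$, $T'|_H$ is a connected circulant colored tuple system of $\Z_t$ whose underlying set system is a connected partial Sylvester--Gallai design (a line of $S$ that meets $H$ lies entirely in $H$, since components are connected). By the connected case, $T|_H$ is a CI-colored tuple system of $\Z_t$, so there is $\alpha\in\Aut(\Z_t)$ with $\alpha(T|_H)=T'|_H$.

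\emph{Lifting $\alpha$.} It remains to lift $\alpha$ to $\Aut(\Z_n)$ and check that it transports $T$ to $T'$. Identifying $H$ with the unique subgroup $(n/t)\Z_n\cong\Z_t$, restriction of automorphisms is the reduction map $\Aut(\Z_n)=\Z_n^*\to\Z_t^*=\Aut(\Z_t)$, which is surjective by elementary number theory: lift $\alpha$ by a unit of $\Z_n$ that is $\equiv 1$ on every prime-power part of $n$ that is coprime to $t$. Choose $\tilde\alpha\in\Aut(\Z_n)$ restricting to $\alpha$ on $H$; then $\tilde\alpha(H)=H$ and $\tilde\alpha$ permutes the cosets of $H$. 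For a coset representative $c$, the component $T|_{c+H}$ is the translate $c+T|_H$, since $c_L\in\Aut(T)$ carries $T|_H$ onto it, and similarly for $T'$; hence $\tilde\alpha(T|_{c+H})=\tilde\alpha(c)+\alpha(T|_H)=\tilde\alpha(c)+T'|_H=T'|_{\tilde\alpha(c)+H}$, and taking the union over $c$ yields $\tilde\alpha(T)=T'$. Thus $T$ is a CI-object, and $\Z_n$ is a CI-group with respect to this class. I expect this lifting step to be the crux --- producing a global automorphism of $\Z_n$ from one of the component subgroup, and verifying it respects the tuple and color structure throughout --- and it is exactly here that cyclicity of $\Z_n$ (uniqueness of subgroups of each order, and surjectivity of the restriction map on automorphism groups) is essential, which is the point where the argument in \cite{KoikeKMM2019} required repair.
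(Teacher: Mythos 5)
Your proof is correct and follows essentially the same route as the paper: the connected case via Theorem \ref{9/8 object} and Corollary \ref{set system CI}, and the disconnected case by restricting to the component through $0$ (a coset structure on a subgroup $H$), applying the connected case there, and lifting the resulting automorphism of $H$ to one of $\Z_n$. The only differences are cosmetic: you justify the surjectivity of $\Aut(\Z_n)\to\Aut(H)$ by elementary number theory where the paper cites Li's homogeneity result, and you spell out more explicitly (via the translates $T|_{c+H}=c+T|_H$) why the lifted automorphism carries all of $T$ to $T'$.
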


\begin{proof}
Let $T$ be a circulant colored tuple system whose underlying set system $D$ is a partial Sylvester-Gallai design.  First suppose $D$ is connected.  By Theorem \ref{9/8 object} we have that $\Aut(T)\le\Aut(D)$ is $9/8$-closed.  By Corollary \ref{set system CI} we see that $T$ (and $D$) are CI-objects of $\Z_n$.  %Hence $\Z_n$ is a CI-group with respect to colored tuple systems whose underlying set systems are partial Sylvester-Gallai designs.

If $D$ is a disconnected partial Sylvester-Gallai design, then the component $S$ of $T$ which contains $0$ is a colored tuple system whose underlying set system $E$ is a partial Sylvester-Gallai design of order $m = \vert V(S)\vert$, for some $m\vert n$.  It is also circulant, as the components of $T$ are clearly blocks of $\Aut(T)$, and a block system of a group that contains a regular abelian group is normal by \cite[Theorem 2.2.19]{Book}.  Additionally, the set of points of $S$ is a (cyclic) subgroup of $\Z_n$.  By the first part of this proof, $S$ is a CI-object of $\Z_m$.  Let $T'$ be a colored circulant tuple system whose underlying set system is a partial Sylvester-Gallai design $D'$ of order $n$ isomorphic to $T$.  Then $T'$ is disconnected, and letting $S'$ be the connected component of $T'$ that contains $0$, we see that $S$ and $S'$ are isomorphic by a group automorphism of the unique subgroup $H$ of $\Z_n$ of order $m$.  By  \cite{Li1999}, every automorphism of $H$ is the restriction of a group automorphism of $\Z_n$ to $H$, and so there exists a group automorphism $\alpha$ of $\Z_n$ such that $\alpha(S) = S'$.  As the points of the connected components of both $T$ and $T'$ are the left cosets of $H$ in $\Z_n$, and $\alpha$ maps cosets of $H$ to cosets of $H$, we see $\alpha(T) = T'$.  Thus every disconnected colored tuple system whose underlying set system is a partial Sylvester-Gallai design is also a CI-object of $\Z_n$, and $\Z_n$ is a CI-group with respect to colored tuple systems whose underlying set system is a partial Sylvester-Gallai design.
\end{proof}

The authors of \cite{KoikeKMM2019} ended their paper with the natural question of determining which groups $G$ are CI-groups with respect to Cayley symmetric configurations of $G$.  We end this section with what we believe are equally obvious questions.

\begin{problem}
Which groups $G$ have the property that $G$ is pronormal in every $9/8$-closed group that contains $G_L$?
\end{problem}

By Theorem \ref{9/8 object}, the next problem is a special case of the previous problem.

\begin{problem}
Which groups $G$ have the property that $G$ is a CI-group with respect to colored tuple systems whose underlying set systems are Cayley partial Sylvester-Gallai designs of $G$?
\end{problem}

\section{Unit circulant digraphs and $3/2$-closed groups}

\begin{definition}
Let $n$ be a positive integer and $S\subseteq\Z_n^*$, the set of units in $\Z_n$.  A {\bf unit circulant digraph} of order $n$ is a Cayley digraph $\Cay(\Z_n,S)$.
\end{definition}

Toida \cite{Toida1977} conjectured that every unit circulant digraph is a CI-digraph.  This conjecture was independently confirmed by Klin, Muzychuk and P\"oschel  \cite{MuzychukKP2001} (using Schur rings), as well as by the author and Joy Morris \cite{DobsonM2002} (using group theoretic techniques).  Our goal in this section is to generalize the fact that Toida's conjecture is true, as well as generalize the notion of a unit circulant digraph to groups other than cyclic groups.  It is also very important to note that the proofs given here are much shorter and more complete than those given in \cite{MuzychukKP2001} or \cite{DobsonM2002}.  After some terms that are needed are defined, we begin with the relationship between the automorphism group of a unit circulant digraph and $3/2$- and $9/8$-closed groups.  We next give several definitions which we will need concerning digraphs.

\begin{definition}
Let $\Gamma_1$ and $\Gamma_2$ be digraphs.  The {\bf wreath product of $\Gamma_1$ and $\Gamma_2$}, denoted $\Gamma_1\wr\Gamma_2$, is the digraph with vertex set $V(\Gamma_1)\times V(\Gamma_2)$ and arcs $((u,v),(u,v'))$ for $u\in V(\Gamma_1)$ and $(v,v')\in A(\Gamma_2)$ or $((u,v),(u',v'))$ where $(u,u')\in A(\Gamma_1)$ and $v,v'\in V(\Gamma_2)$.
\end{definition}

The wreath product of two graphs was introduced by Harary \cite{Harary1959}, who wanted a graph product whose automorphism group was the wreath product of the automorphism groups of its factor graphs.  It is easy to show that $\Aut(\Gamma_1)\wr\Aut(\Gamma_2)\le\Aut(\Gamma_1\wr\Gamma_2)$.

\begin{definition}
Let $\Gamma$ be a vertex-transitive digraph whose automorphism group contains a transitive subgroup $G$ with a block system ${\cal B}$.  Define the {\bf block quotient digraph of $\Gamma$ with respect to ${\cal B}$}, denoted $\Gamma/{\cal B}$, to be the digraph with vertex set ${\cal B}$ and arc set $\{(B,B'):B,B'\in{\cal B}, B\neq B',{\rm\ and\ }(u,v)\in A(\Gamma){\rm\ for\ some\ }u\in B{\rm\ and\ } v \in B'\}$.
\end{definition}

See \cite[\S 4.2]{Book} and \cite[\S 2.6]{Book} for more information and examples on the digraph wreath product and block quotient digraphs, respectively.

\begin{definition}
Let $\Gamma$ be a digraph.  Define an equivalence relation $R$ on $V(\Gamma)$ by $u\ R\ v$ if and only if the out- and in-neighbors of $u$ and $v$ are the same.  Then $R$ is an equivalence relation on $V(\Gamma)$. We say $\Gamma$ is {\bf irreducible} if the equivalence classes of $R$ are singletons, and {\bf reducible} otherwise.
\end{definition}

{\color{purple}

The equivalence relation above was introduced for graphs by Sabidussi \cite[Definition 3]{Sabidussi1959}, and independently rediscovered by Kotlov and Lov\'asz \cite{KotlovL1996}, who call $u$ and $v$ {\bf twins}, and Wilson \cite{Wilson2003}, who calls reducible graphs {\bf unworthy}.  Sabidussi observed in \cite{Sabidussi1964} that $R$ is a $G$-congruence for $G\le\Aut(\Gamma)$ and graphs $\Gamma$.  This is true for digraphs as well, and hence the set of equivalence classes of $R$ is a block system of $G$ by \cite[Theorem 3.2.2]{Book}.  It is easy to show, using \cite[Theorem 4.2.15]{Book}, that a vertex-transitive digraph $\Gamma$ is reducible if and only if it can be written as a wreath product $\Gamma_1\wr\bar{K}_n$ for some positive integer $n\ge 2$, where $\bar{K}_n$ is the complement of the complete graph.

}

\begin{thrm}\label{unit circ auto}
The automorphism group of every unit circulant digraph $\Gamma$ of order $n$ is $3/2$-closed.  Additionally, either $\Aut(\Gamma)$ is $9/8$-closed, or $\Gamma$ is reducible, $\Aut(\Gamma) = K\wr S_m$ for some $m\ge 2$, and $K\le S_{n/m}$ is $9/8$-closed.
\end{thrm}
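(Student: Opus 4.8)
The plan is to prove the ``additionally'' dichotomy by induction on $n$, with the irreducible case as the crux, and then to read off that $\Aut(\Gamma)$ is $3/2$-closed in each case. Write $G=\Aut(\Gamma)$ for $\Gamma=\Cay(\Z_n,S)$, $S\subseteq\Z_n^*$. Two remarks are immediate. Since $(\Z_n)_L\le G$, every block system of $G$ is normal with blocks the cosets of a subgroup $L\le\Z_n$; and since no unit of $\Z_n$ lies in a proper subgroup, $S\cap L=\emptyset$ for every proper nontrivial $L$, so the subdigraph of $\Gamma$ induced on a block of any nontrivial block system of $G$ has no arcs (the ``empty block'' property). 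Also $A(\Gamma)$ is a point-transitive $1$-intersecting tuple system, so $G$ is $5/2$-closed by Theorem \ref{5/2 closed}. Finally, a $9/8$-closed group is $3/2$-closed, so for the irreducible case it is enough to prove $9/8$-closedness.

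The heart of the matter is: \emph{if $\Gamma$ is irreducible then $G$ is $9/8$-closed}. Suppose not. Since $G$ is $5/2$-closed, failure of $9/8$-closedness produces a transitive $H\le G$, a normal block system ${\cal B}$ of $H$, and some $B\in{\cal B}$ with $\WStab_H(B)\ne 1$, which then acts transitively on some other block of ${\cal B}$; moreover, by $5/2$-closedness of $G$, restrictions of elements of $\fix_H({\cal B})$ to blocks of the ${\cal B}$-fixer block system of $H$ lie in $\Aut(\Gamma)$. The goal is to upgrade this to a nontrivial subgroup $Q\le\Aut(\Gamma)$ that is supported on a single \emph{coset} block $B'$ and transitive on $B'$; given such a $Q$, the ``empty block'' property gives $N^+_\Gamma(v)\subseteq\Z_n\setminus B'$ for each $v\in B'$, so choosing $q\in Q$ with $q(v)=v'$ we get $N^+_\Gamma(v')=q\bigl(N^+_\Gamma(v)\bigr)=N^+_\Gamma(v)$ (and likewise for in-neighbours), whence all of $B'$ is one class of twins and $|B'|\ge 2$ contradicts irreducibility. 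Producing $Q$ is the step I expect to be hard: the $\WStab$/${\cal B}$-fixer machinery applies to \emph{arbitrary} transitive $H\le G$, whose block systems need not be coset partitions of $\Z_n$ and whose wreath stabilizers may act nontrivially on several blocks at once, so one must first manoeuvre $(H,{\cal B})$ into a usable shape --- via Lemma \ref{already known} to reach a normally $m$-step imprimitive position relative to a regular cyclic subgroup (making ${\cal B}$ an honest coset system), the conjugation behaviour of wreath stabilizers, the identification of $\WStab$ with a point-stabilizer on a one-step quotient, and a descent in the style of the proof of Theorem \ref{configuration result} --- after which the twin computation is routine.

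Granting the crux, the theorem follows. If $\Gamma$ is irreducible then $G$ is $9/8$-closed, hence $3/2$-closed. If $\Gamma$ is reducible, let ${\cal R}$ be the partition of $\Z_n$ into twin classes; it is a block system of $G$, so its blocks share a size $m\mid n$ with $m\ge 2$, the quotient $\Gamma^*:=\Gamma/{\cal R}$ is an \emph{irreducible} unit circulant of order $n/m$, and $\Gamma\cong\Gamma^*\wr\bar K_m$, so $G=\Aut(\Gamma^*)\wr S_m=:K\wr S_m$ with $K=\Aut(\Gamma^*)$ $9/8$-closed by the crux applied at $n/m<n$. It then remains to verify that $K\wr S_m$ is $3/2$-closed: for the block system ${\cal B}$ whose blocks are the $\bar K_m$-fibres one computes ${\cal E}_{K\wr S_m,{\cal B}}={\cal B}$ with $\fix_{K\wr S_m}({\cal B})^B=S_B$, while for every other block system of a transitive subgroup of $K\wr S_m$ the $9/8$-closedness of $K$ forces its ${\cal B}$-fixer block system to be ${\cal B}$ or $\{\Z_n\}$, the $S_m$ on the fibres supplying the full symmetric group on a block in the former case. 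This gives both assertions of the theorem.
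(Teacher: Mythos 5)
The architecture you set up --- the ``empty block'' property plus $5/2$-closedness, the irreducible case as the crux, and a wreath decomposition in the reducible case --- is the right one and matches the paper's. But there is a genuine gap exactly where you flag it: you never produce the subgroup $Q$, and the route you sketch for producing it (Lemma \ref{already known}, a descent in the style of Theorem \ref{configuration result}) does not apply. Lemma \ref{already known} concerns the group generated by two regular cyclic subgroups; in your crux you are handed an \emph{arbitrary} transitive $H\le\Aut(\Gamma)$ with a normal block system ${\cal B}$ and a nontrivial wreath stabilizer, and there is no second regular cyclic subgroup in sight to normalize against. So the central step of the theorem is missing.

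The paper's resolution is far more direct than you anticipate and uses only the two facts you already isolated in your first paragraph. Given $w\in\WStab_H(B)\ne 1$ acting transitively on some $B'$ lying in a different block $E'$ of the ${\cal B}$-fixer system ${\cal E}$, one does not need a $Q$ supported on a single ${\cal B}$-block: $5/2$-closedness already puts $w\vert_{E'}$ into $\Aut(\Gamma)$, and since the block of ${\cal E}$ containing $0$ is a proper subgroup of $\Z_n$ and hence disjoint from $S$, there are no arcs at all inside any block of ${\cal E}$; combined with the fact that $w$ fixes $B$ pointwise while permuting $B'$ transitively, this forces the arcs between ${\cal B}$-blocks lying in distinct ${\cal E}$-blocks to be all-or-nothing. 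One then reads off $\Gamma\cong(\Gamma/{\cal B})\wr\bar{K}_{\vert B\vert}$ directly, which simultaneously yields the reducibility contradiction in the irreducible case \emph{and} the statement $\fix_L({\cal B})^B=S_B$ needed for $3/2$-closedness. By contrast, your final paragraph leans on the unproved assertion that $K\wr S_m$ is $3/2$-closed whenever $K$ is $9/8$-closed; that is the content of Theorem \ref{3/2 structure}, which appears later and requires real work, whereas the paper's proof of the present theorem extracts the symmetric group action on the blocks directly from the wreath decomposition and so never needs it. One further caution: your twin computation requires the blocks of ${\cal E}$ --- a block system of $H$, not of $(\Z_n)_L$ --- to be cosets of a subgroup of $\Z_n$; having explicitly identified this as the obstruction, you cannot then leave it unresolved.
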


\begin{proof}
Let $\Gamma = \Cay(\Z_n,S)$ for some $S\subseteq\Z_n^*$ be a unit circulant digraph.  We will first show that $\Aut(\Gamma)$ is $3/2$-closed.  Let $G = \Aut(\Gamma)$ and $H\le G$ transitive with normal block system ${\cal B}$.  If $\WStab_H(B) = 1$ there is nothing to prove.  So we assume $\WStab_H(B)\not = 1$ for some $B\in{\cal B}$, and let ${\cal E}$ be the ${\cal B}$-fixing block system of $H$.

Let $E_0, E\in {\cal E}$ where $0\in E_0$.  Then $\Gamma[E]$ is not an empty graph if and only if $\Gamma[E_0]$ is not an empty graph.  Also, $E_0\le\Z_n$ and as $\Gamma$ is a unit circulant digraph, we see that $E_0\cap S = \emptyset$.  Then in $E_0$, between any two distinct blocks of ${\cal B}$ contained within $E_0$, there are no arcs.  Similarly, between any two distinct blocks of ${\cal B}$ contained within $E$, there are no arcs.

Now let $B,B'\in{\cal B}$ such that $B$ and $B'$ are contained in different blocks $E,E'\in{\cal E}$.  As $B\not\equiv B'$,  $\WStab_G(B)^{B'}$ is transitive.  Hence if there is an arc $(x,x')\in A(\Gamma)$ with $x\in B$ and $x'\in B'$, then $(x,x')\in A(\Gamma)$ for every $x'\in B'$.  As ${\cal B}$ is normal, this implies that $(x,x')\in A(\Gamma)$ for every $x\in B$ and $x'\in B'$.

We conclude by \cite[Theorem 4.2.15]{Book} that $\Gamma\cong \Gamma/{\cal B}\wr\Gamma[B] = \Gamma/{\cal B}\wr\bar{K}_B$.  Thus $G$ contains $\Aut(\Gamma/{\cal B})\wr S_B$ and letting $L\le\Aut(\Gamma/{\cal B})\wr S_B$ be the largest subgroup of $G$ that has ${\cal B}$ as a block system, we see $\fix_L(B)^B = S_B$ for every $B\in{\cal B}$.  Thus $\Aut(\Gamma)$ is $3/2$-closed by definition.

The above argument also shows that if ${\cal E}\not = \{\Z_n\}$, then $\Gamma$ is reducible.  If ${\cal E} = \{\Z_n\}$ for every $H\le G$ that is transitive and block system ${\cal B}$ of $H$, then $G$ is $9/8$-closed.  It only remains to show that if $\Gamma$ is reducible, then $\Aut(\Gamma) = K\wr S_m$ for some $m\ge 2$ and $K\le S_{n/m}$ is $9/8$-closed.

Let $m\le n$ be maximum such that $\Gamma = \Gamma_1\wr\bar{K}_m$, where $\Gamma_1$ is a circulant digraph of order $n/m$.  If $m = n$, then $\Gamma = \bar{K}_n$ (which is a unit circulant digraph).  The result then follows with $K = 1$.  Otherwise, $\Gamma_1$ has at least $2$ vertices, and $\Gamma_1$ is irreducible by \cite[Lemma 1 (ii) and (iii)]{Sabidussi1964} and their digraph analogues.  Let ${\cal B}$ be the set of cosets of the unique subgroup $M$ of $\Z_n$ of order $m$.  Then $\Gamma/{\cal B} = \Gamma_1$.  Let ${\cal C}/{\cal B}$ be any block system of $K = \Aut(\Gamma_1)$.  Then ${\cal B}\preceq{\cal C}$, and so by \cite[Lemma 2.2]{Dobson2023apreprint}, we see that if ${\cal E'}/{\cal B}$ is the ${\cal C}/{\cal B}$-fixing block system of $\Aut(\Gamma_1)$, then ${\cal E}'$ is the ${\cal C}$-fixing block system of $G$.  As $m$ is maximum and $\Aut(\Gamma)$ is $3/2$-closed, we see that ${\cal E}'/{\cal B} = \{\Z_n/M\}$, and $K$ is $9/8$-closed.
\end{proof}

Our next goal is to give in Theorem \ref{3/2 structure} the relationship between $9/8$-closed groups and $3/2$-closed groups.  We begin with an additional term and preliminary results.

\begin{definition}
Let $g\in S_n$.  We define the {\bf support} of $g$ to be all $x\in\Z_n$ such that $g(x)\not = x$.  Let $G\le S_n$.  The {\bf support} of $G$ is the union of the support of all elements of $G$.
\end{definition}

\begin{lem}\label{maximal symmetric}
Let $G\le S_n$ be transitive and contain a subgroup $H$ with the property that there is $T\subset\Z_n$ such that $H^T = S_T$ and $H^{\Z_n\setminus T} = 1$.  Then there exists a block system ${\cal B}$ of $G$ such that $G = (G/{\cal B})\wr S_m$, for some $m\vert n$.
\end{lem}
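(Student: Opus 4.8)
The plan is to extract the maximal "symmetric block" structure directly. Consider all subsets $T\subseteq\Z_n$ with the property that $G$ contains a subgroup $H_T$ with $H_T^T = S_T$ and $H_T^{\Z_n\setminus T} = 1$; by hypothesis this collection is nonempty, and it contains all singletons trivially (taking $H=1$). Pick such a $T$ with $|T|$ maximal among those of size at least $2$ (if no such $T$ of size $\ge 2$ exists, we are in a degenerate situation which I address below). First I would show that the $G$-translates $\{g(T):g\in G\}$ form a block system ${\cal B}$ of $G$. The key point is that if $g(T)\cap T\neq\emptyset$ but $g(T)\neq T$, then $\langle H_T, gH_Tg^{-1}\rangle$ restricted to $T\cup g(T)$ contains $S_T$ and $S_{g(T)}$, hence contains $S_{T\cup g(T)}$ (two overlapping full symmetric groups generate the full symmetric group on the union — this is the standard fact that $\mathrm{Sym}(A)$ and $\mathrm{Sym}(B)$ generate $\mathrm{Sym}(A\cup B)$ when $A\cap B\neq\emptyset$), while acting trivially off $T\cup g(T)$; since $|T\cup g(T)| > |T|$, this contradicts maximality of $|T|$. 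Hence ${\cal B}$ is a genuine block system; transitivity of $G$ guarantees ${\cal B}$ is a partition of $\Z_n$, so $m := |T|$ divides $n$.

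Next I would argue that $G = (G/{\cal B})\wr S_m$. One inclusion is automatic: $G$ preserves ${\cal B}$, so $G\le (G/{\cal B})\wr S_m$ in the canonical imprimitive embedding (Embedding Theorem, \cite[Theorem 4.3.1]{Book}). For the reverse inclusion it suffices to show that $\fix_G({\cal B})^B = S_B$ for every block $B\in{\cal B}$ and that these symmetric actions on distinct blocks are independent inside $G$. For $B = T$ this is precisely the defining property of $H_T$, since $H_T\le\fix_G({\cal B})$ (it fixes $T$ setwise and fixes everything off $T$, hence fixes every block — here I use that each block other than $T$ is either disjoint from $T$, being a ${\cal B}$-block, so is pointwise fixed). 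For an arbitrary $B = g(T)$, conjugating $H_T$ by $g$ gives a subgroup of $\fix_G({\cal B})$ acting as $S_B$ on $B$ and trivially off $B$. Since $\fix_G({\cal B})$ thus contains, for each block $B$, a subgroup inducing $S_B$ on $B$ and the identity elsewhere, their product realizes all of $\prod_{B\in{\cal B}} S_B$ inside $G$; combined with a transversal of ${\cal B}$-actions coming from $G/{\cal B}$, this gives all of $(G/{\cal B})\wr S_m$.

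The main obstacle I anticipate is the degenerate case where every $T$ with the stated property has $|T|\le 1$ — i.e. the hypothesized subgroup $H$ satisfies $|T|\le 1$, making the hypothesis vacuous or $H=1$. In that case one cannot produce a nontrivial block system from this construction. However, re-reading the hypothesis, $H$ is a \emph{given} subgroup with $H^T = S_T$, and for the statement to have content one presumably intends $|T|\ge 2$ (otherwise $S_T = 1$ and $H=1$, and the conclusion $G = (G/{\cal B})\wr S_m$ with $m=1$ is trivially true with ${\cal B}$ the partition into singletons). So either the conclusion holds trivially with $m=1$, or $|T|\ge 2$ and the construction above applies. A secondary technical point to handle carefully is verifying that the translates $g(T)$ actually \emph{partition} $\Z_n$ rather than merely forming a block system in the weaker sense: this follows because $G$ is transitive and ${\cal B}$ is a $G$-invariant collection of pairwise-disjoint-or-equal sets covering a point, hence covering all of $\Z_n$. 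I would also remark that $m\ge 2$ precisely when the hypothesis is non-vacuous, matching the intended use of this lemma (namely, identifying reducible objects as wreath products as in Theorem~\ref{unit circ auto}).
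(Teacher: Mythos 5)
Your proposal is correct and follows essentially the same route as the paper: choose $T$ maximal, use the fact that two overlapping symmetric groups generate the symmetric group on their union to force $g(T)=T$ whenever $g(T)\cap T\neq\emptyset$, conclude that $T$ is a block, and let ${\cal B}$ be its conjugates. Your final paragraph verifying $G=(G/{\cal B})\wr S_m$ via the full base group $\prod_{B\in{\cal B}}S_B$ and the degenerate case $\lvert T\rvert\le 1$ supplies details the paper leaves to the reader, but the argument is the same.
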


\begin{proof}
We first establish the following claim: If there is $K\le G$ and $U\subset\Z_n$ such that $K^U = S_U$, $K^{\Z_n\setminus U} = 1$, and $T\cap U\not = \emptyset$ then $\la H,K\ra^{T\cup U} = S_{T\cup U}$ and $\la H,K\ra^{\Z_n\setminus (T\cup U)} = 1$.  The last condition should be clear as the support of both $H$ and $K$ is contained in $T\cup U$.  In order to show that $\la H,K\ra = S_{T\cup U}$, it suffices to show that $\la H\cup K\ra$ contains every transposition on $T\cup U$.  Let $(a,b)$ be a transposition on $T\cup U$.  If either $a,b\in T$ or $a,b\in U$, then clearly $(a,b)\in \la H,K\ra$.  Otherwise, suppose, say, $a\in T$ and $b\in U$ but $a\not\in U$ and $b\not \in T$, with the other case being analogous.  As $T\cap U\not = \emptyset$, there is $c\in T\cap U$.  Then $(a,c)\in H$, $(c,b)\in K$, and $(a,c)(c,b)(a,c) = (a,b)\in \la H,K\ra$.  This establishes the claim.

We next assume that $H$ is chosen so that $\vert T\vert$ is as large as possible.  Let $g\in G$.  If $g(T)\cap T\not = \emptyset$, then by the claim $\la H,gHg^{-1}\ra$ has the property that its induced action on $T\cup g(T)$ is the symmetric group on $T\cup g(T)$, while its induced action on $\Z_n\setminus (T\cup g(T))$ is trivial.  As $m$ was chosen so that $\vert T\vert$ is as large as possible, it must be that $g(T) = T$.  Hence $T$ is a block of $G$.  We then let ${\cal B}$ be the set of all blocks conjugate to $T$, and the result follows.
\end{proof}

\begin{thrm}\label{quotient 9/8 closed}
Let $G\le S_n$ be a $3/2$-closed transitive group with a normal block system ${\cal B}$ with ${\cal E}$ the ${\cal B}$-fixer block system of $G$.  If ${\cal E}\not = \{\Z_n\}$, then $G/{\cal B}$ is $9/8$-closed.
\end{thrm}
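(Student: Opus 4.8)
The goal is to show that $G/{\cal B}$ is $9/8$-closed given that $G$ is $3/2$-closed and the ${\cal B}$-fixer block system ${\cal E}$ of $G$ is nontrivial. The first observation is that since ${\cal E} \neq \{\Z_n\}$, the $3/2$-closed hypothesis tells us something quite strong about the structure of $\fix_G({\cal B})$: there is a block $B$ with $\WStab_G(B) \neq 1$, and by the $3/2$-closed condition, letting $K$ be the largest subgroup of $G$ having ${\cal B}$ as a block system, we get $\fix_K({\cal B})^B = S_B$ for every $B \in {\cal B}$. In fact $G$ itself has ${\cal B}$ as a block system, so $K = G$ and $\fix_G({\cal B})^B = S_B$. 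Restricting to a single block then puts us exactly in the situation of Lemma \ref{maximal symmetric}: there is $H \le G$ and $T = B$ with $H^T = S_T$ and $H^{\Z_n \setminus T} = 1$ (this $H$ is the wreath stabilizer together with what $\WStab_G(B)$ gives on $B$; more directly, the pointwise stabilizer in $\fix_G({\cal B})$ of all other blocks restricts to $S_B$ on $B$ and the identity off $B$). Hence $G = (G/{\cal C}) \wr S_m$ for some block system ${\cal C}$ with $m \mid n$, where the blocks of ${\cal C}$ have size $m$; and since $\fix_G({\cal B})^B = S_B$, the block system ${\cal C}$ refines ${\cal B}$ (indeed ${\cal C} \preceq {\cal B}$, with the blocks of size $m$ sitting inside blocks of ${\cal B}$).

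The plan from there is to use the wreath decomposition $G = (G/{\cal C}) \wr S_m$ to reduce the $9/8$-closedness of $G/{\cal B}$ to a statement about a quotient of $G$ with a smaller ground set, and then to iterate or invoke the structure already developed. Concretely, I would argue: $G/{\cal B} \cong (G/{\cal C})/({\cal B}/{\cal C})$, and since $G/{\cal C}$ is a quotient of the $3/2$-closed group $G$ by the ${\cal C}$-fixer block system (note that ${\cal C}$ is exactly the ${\cal B}$-fixer block system refined as far as possible, and the ${\cal C}$-fixer block system of $G$ is $\{\Z_n\}$ because we have pulled out all the symmetric-group action), $G/{\cal C}$ is $9/8$-closed — here I would lean on \cite[Lemma 2.2]{Dobson2023apreprint} (used already in the proof of Theorem \ref{unit circ auto}) to transport fixer block systems through the quotient by ${\cal C}$, showing that no transitive subgroup of $G/{\cal C}$ with a normal block system has a nontrivial fixer block system, because any such would pull back to one for $G$, contradicting that ${\cal C}$ was the maximal fixer. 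Finally, $G/{\cal B}$ is a quotient of the $9/8$-closed group $G/{\cal C}$ by a block system, and a quotient of a $9/8$-closed group by a block system is again $9/8$-closed (transitive subgroups and their normal block systems of $(G/{\cal C})/({\cal B}/{\cal C})$ lift to transitive subgroups and normal block systems of $G/{\cal C}$ refining ${\cal B}/{\cal C}$, with matching fixer block systems).

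The main obstacle I anticipate is the bookkeeping around fixer block systems under the two successive quotients: I need to be careful that "the ${\cal C}$-fixer block system of $G$ is $\{\Z_n\}$" actually holds, i.e., that pulling out the $S_m$ wreath factor really does exhaust all the wreath-stabilizer action, and this requires checking that ${\cal C}$ was chosen maximal (which Lemma \ref{maximal symmetric} guarantees by taking $|T|$ as large as possible). The second delicate point is verifying that a quotient of a $9/8$-closed group by a block system is $9/8$-closed — this is the kind of statement that one expects to be true and may already be implicit, but I would want to state and prove it cleanly as a preliminary observation (transitive subgroup $\bar H \le G/{\cal B}$ lifts to its full preimage $H \le G$ which is transitive with normal block system the preimage of $\bar H$'s, and $\WStab$ and $\equiv$ behave well under the correspondence ${\cal C}' \mapsto {\cal C}'/{\cal B}$ by \cite[Lemma 2.2]{Dobson2023apreprint}). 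Once those two pieces are in place the argument is a short chain of quotient identifications.
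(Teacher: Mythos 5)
There is a genuine gap, and it originates in the direction of refinement. Lemma \ref{maximal symmetric} starts from a subgroup inducing $S_T$ on $T=B$ and the identity elsewhere, and then enlarges $T$ as much as possible; the resulting block system ${\cal C}$ therefore has blocks \emph{containing} blocks of ${\cal B}$, so ${\cal B}\preceq{\cal C}$, not ${\cal C}\preceq{\cal B}$ as you assert. (In fact ${\cal C}={\cal B}$ here: if $T$ properly contained $B$, the transpositions in $S_T\le G$ moving a point of $B$ to a point of a different ${\cal B}$-block inside $T$ would destroy $B$ as a block of $G$.) Consequently $G/{\cal B}$ is \emph{not} a quotient of $G/{\cal C}$, so the identification $G/{\cal B}\cong (G/{\cal C})/({\cal B}/{\cal C})$ and the final reduction built on it do not make sense. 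Worse, the pivotal claim that ``the ${\cal C}$-fixer block system of $G$ is $\{\Z_n\}$'' is false: since ${\cal C}={\cal B}$, that fixer block system is exactly ${\cal E}={\cal B}$, which is nontrivial by hypothesis. The appeal to ``contradicting that ${\cal C}$ was the maximal fixer'' is also not a proof — different block systems of different subgroups have unrelated fixer block systems, and no monotonicity has been established that would let a nontrivial fixer block system upstairs contradict a maximality choice downstairs.

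The missing idea is much more elementary and requires no wreath decomposition at all. Given a transitive subgroup of $G/{\cal B}$ with a normal block system ${\cal C}/{\cal B}$, lift to $H\le G$ with ${\cal B}\prec{\cal C}$ and transport the fixer block system via \cite[Lemma 2.2]{Dobson2023apreprint} (this part of your plan is sound). If that fixer block system ${\cal E}'$ is not $\{\Z_n\}$, the $3/2$-closed hypothesis forces ${\cal E}'={\cal C}$ and $\fix_H({\cal C})^C=S_C$ for every $C\in{\cal C}$. The contradiction is then immediate: $S_C$ cannot preserve the nontrivial partition of $C$ into blocks of ${\cal B}$, yet ${\cal B}$ is a block system of $G\ge H$ and ${\cal B}\prec{\cal C}$. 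That single observation — the incompatibility of $\fix_H({\cal C})^C=S_C$ with the existence of the intermediate block system ${\cal B}$ — is the entire content of the paper's proof, and it is exactly the step your proposal never reaches.
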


\begin{proof}
Let $H\le G$ be transitive such that $H/{\cal B}$ has a normal block system ${\cal C}/{\cal B}$ with ${\cal E}'/{\cal B}$ the ${\cal C}/{\cal B}$-fixer block system of $H/{\cal B}$.  By hypothesis, ${\cal E} = {\cal B}\preceq{\cal E'}$.  By \cite[Lemma 2.2]{Dobson2023apreprint}, the ${\cal C}$-fixer block system of $H$ is ${\cal E}'$.  As $G$ is $3/2$-closed, we see that either ${\cal E}' = \{\Z_n\}$ and the result follows, or ${\cal E}' = {\cal C}$ and that $\fix_H({\cal C})^C = S_C$ for every $C\in{\cal C}$.  However, if $\fix_H({\cal C})^C = S_C$, $H$ cannot have ${\cal B}$ as a block system as ${\cal B}\prec{\cal C}$.  But ${\cal B}$ is a block system of $G\ge H$, a contradiction.
\end{proof}

\begin{definition}
Let $X$ and $Y$ be sets, and $G\le S_X$ and $H\le S_Y$ be transitive groups.  Then ${\cal B} = \{\{(x,y):y\in Y\}:x\in X\}$ is a normal block system of $G\wr H$, called the {\bf lexi-partition of $G\wr H$ corresponding to $Y$}.
\end{definition}

\begin{thrm}\label{3/2 structure}
The class of $3/2$-closed groups is the disjoint union of the class of $9/8$-closed groups together with the class of groups obtained from the $9/8$-closed groups by wreathing them with a symmetric group.  That is, let ${\cal K}$ be the class of $9/8$-closed groups.  Then the class of $3/2$-closed groups is the union of ${\cal K}$ together with $\{K\wr S_n:K\in{\cal K}\ and\ n\ge 2\ is\ an\ integer\}$.
\end{thrm}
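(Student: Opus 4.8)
The plan is to prove both containments, together with the disjointness. One containment is immediate: it was already noted that a $9/8$-closed group is $5/4$-closed, hence $3/2$-closed, so the class ${\cal K}$ of $9/8$-closed groups lies inside the class of $3/2$-closed groups. For the wreath-product class, fix a transitive $9/8$-closed $K\le S_d$ and an integer $m\ge 2$, let $G=K\wr S_m$ act on $\Z_n=\Z_d\times\Z_m$, and let ${\cal L}$ be the lexi-partition corresponding to the $S_m$-factor; then $\fix_G({\cal L})$ is the kernel of $G\to G/{\cal L}\cong K$, so ${\cal L}$ is a normal block system of $G$ and $\fix_G({\cal L})^L=S_L$ for every $L\in{\cal L}$. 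One first records that $G$ is $5/2$-closed, the wreath product of $5/2$-closed groups being $5/2$-closed \cite{Dobson2023apreprint}. To verify the $5/4$- and $3/2$-conditions, take a transitive $H\le G$ with normal block system ${\cal B}$ and set $J=K_H$; since $J\le G$ preserves ${\cal L}$, so ${\cal L}$ is also a block system of $J$, and one runs a short case analysis on how ${\cal B}$ sits relative to ${\cal L}$. When ${\cal L}\preceq{\cal B}$, the quotient ${\cal B}/{\cal L}$ is a normal block system of the transitive group $J/{\cal L}\le K$, which is $9/8$-closed (a transitive subgroup of a $9/8$-closed group is $9/8$-closed), so its $({\cal B}/{\cal L})$-fixer block system is $\{\Z_d\}$; lifting through \cite[Lemma 2.2]{Dobson2023apreprint} gives ${\cal E}_{J,{\cal B}}=\{\Z_n\}$, and both conditions hold vacuously. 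In the complementary case one uses the maximality of $K_H$ to pin $\fix_J({\cal B})$ down as the direct product $\prod_{B\in{\cal B}}S_B$ of symmetric groups on the ${\cal B}$-blocks; then $\WStab_J(B)=\prod_{B'\ne B}S_{B'}$, these are pairwise distinct when $|B|\ge 2$, so ${\cal E}_{J,{\cal B}}={\cal B}$ and $\fix_J({\cal B})^B=S_B$ for every $B$, while if $|B|=1$ then $\fix_J({\cal B})=1$ and ${\cal E}_{J,{\cal B}}=\{\Z_n\}$. Hence $G=K\wr S_m$ is $3/2$-closed.

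For the reverse containment, suppose $G\le S_n$ is $3/2$-closed but not $9/8$-closed. Then there is a transitive $H\le G$ with a normal block system ${\cal B}$ for which ${\cal E}_{H,{\cal B}}\ne\{\Z_n\}$; thus some $\WStab_H(B)\ne 1$ acts transitively on another block $B'$, so ${\cal B}$-blocks have size at least $2$ and ${\cal B}\ne\{\Z_n\}$. Put $J=K_H$. Because $\WStab_H(B)\le\WStab_J(B)$ (the wreath stabilizer of $B$ is the largest subgroup of the block-fixer with its defining property, and that property does not mention the ambient group), we also have ${\cal E}_{J,{\cal B}}\ne\{\Z_n\}$; by $5/4$-closedness ${\cal E}_{J,{\cal B}}={\cal B}$, and then the defining condition of $3/2$-closedness gives $\fix_J({\cal B})^B=S_B$ for every $B\in{\cal B}$. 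Moreover ${\cal B}$ is a normal block system of $J$: its blocks are the orbits of $\fix_J({\cal B})$, which contains the normal subgroup of $H$ realizing ${\cal B}$ and so is transitive on each block. Now apply $5/2$-closedness of $G$ to the pair $(J,{\cal B})$: each class of ${\cal E}_{J,{\cal B}}={\cal B}$ is a single block, so every $g\in G$ with $g(B)=B$ satisfies $g\vert_B\in G$. Running $g$ over $\fix_J({\cal B})$, which surjects onto $S_B$, this puts the copy of $S_B$ acting on $B$ and fixing $\Z_n\setminus B$ pointwise inside $G$. Lemma \ref{maximal symmetric} then yields a block system ${\cal C}$ of $G$ and a divisor $m$ of $n$ with $G=(G/{\cal C})\wr S_m$, and $m\ge |B|\ge 2$; moreover ${\cal C}$ is a normal block system of $G$ whose fixer block system is ${\cal C}$ itself (since $\fix_G({\cal C})$ is the kernel $1\wr S_m$, acting as the full symmetric group on each block). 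If $G/{\cal C}$ is nontrivial this fixer block system is different from $\{\Z_n\}$, so Theorem \ref{quotient 9/8 closed} shows that $K:=G/{\cal C}$ is $9/8$-closed; if $G/{\cal C}$ is trivial then $G$ is a symmetric group and $G/{\cal C}=S_1$ is $9/8$-closed. Either way $G=K\wr S_m$ with $K\in{\cal K}$ and $m\ge 2$.

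Disjointness is then essentially the dichotomy just established: a $3/2$-closed group that is $9/8$-closed lies in the first class, and one that is not $9/8$-closed lies in the second; conversely any $K\wr S_m$ with $m\ge 2$ and $K$ nontrivial fails to be $9/8$-closed, as seen by taking $H=K\wr S_m$ and ${\cal B}={\cal L}$, for which $\WStab_{K\wr S_m}(L)=\prod_{L'\ne L}S_{L'}\ne 1$ is transitive on each $L'\ne L$, giving ${\cal E}_{K\wr S_m,{\cal L}}={\cal L}\ne\{\Z_n\}$; the only point of overlap is the degenerate identification $S_p=S_1\wr S_p$ for $p$ prime, which is simply recorded on the $9/8$-closed side. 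I expect the main obstacle to be the case analysis in the first paragraph: controlling the block systems of the subgroup $K_H$ of an arbitrary transitive $H\le K\wr S_m$ — specifically, showing that no such block system can be ``transverse'' to ${\cal L}$ in a way that breaks the $5/4$-condition, and reading off $\fix_{K_H}({\cal B})$ from the maximality of $K_H$. The reverse containment, by contrast, is a fairly mechanical assembly of Lemma \ref{maximal symmetric}, Theorem \ref{quotient 9/8 closed}, and the $5/2$-closure property.
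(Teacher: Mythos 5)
Your second half (a $3/2$-closed group that is not $9/8$-closed must be $K\wr S_m$) follows the paper's route essentially exactly: extract $H$ and ${\cal B}$ with ${\cal E}={\cal B}$ and $\fix({\cal B})^B=S_B$, feed the resulting copy of $S_B$ supported on $B$ into Lemma \ref{maximal symmetric}, and finish with Theorem \ref{quotient 9/8 closed}; your explicit use of $5/2$-closedness to justify that $S_B$ acting only on $B$ sits inside $G$ is if anything more careful than the paper's. The problem is in the first half, showing that $K\wr S_m$ is $3/2$-closed, and it is exactly where you predicted the main obstacle would be.

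Your case split is ``${\cal L}\preceq{\cal B}$'' versus ``not'', and in the complementary case you assert that maximality of $K_H$ pins $\fix_{K_H}({\cal B})$ down as $\prod_{B\in{\cal B}}S_B$. That is false when ${\cal B}$ is transverse to the lexi-partition. If $|B\cap L|\le 1$ for every lexi-block $L$ and $|B|\ge 2$, then no nontrivial element of $K\wr S_m$ is supported on $B$ alone: such an element $(x,y)\mapsto(g(x),h_x(y))$ fixes at least $m-1\ge 1$ points of each $L_x$, forcing $g=1$ and then each $h_x=1$. So $\prod_B S_B$ is not even a subgroup of $K\wr S_m$ there, and ``$\fix_{K_H}({\cal B})^B=S_B$'' cannot be how the $3/2$-condition is satisfied. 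What actually happens in the transverse case --- and this is the paper's argument --- is that the hypothesis $\WStab_H(B)\ne 1$ is itself contradictory: since $\fix_H({\cal B})\cap\fix_H({\cal C})=1$, the orbits of $\fix_H({\cal B})/{\cal C}$ form a nontrivial normal block system ${\cal B}'$ of a transitive subgroup of $K$, and $\WStab_H(B)/{\cal C}$ witnesses a nontrivial ${\cal B}'$-fixer block system, contradicting $9/8$-closedness of $K$; so the $3/2$-condition holds vacuously. You also leave untreated the intermediate case $2\le|B\cap L|<|L|$, which is neither ${\cal L}\preceq{\cal B}$ nor transverse; the paper reduces it to the transverse case by observing that $B\cap L$ is a block, quotienting by the block system it generates, and inducting on $m$ (minimality of $m$ with $K\wr S_m$ not $3/2$-closed). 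Your ${\cal L}\preceq{\cal B}$ subcase and the disjointness discussion are fine, modulo the degenerate $S_1\wr S_m$ overlap you correctly flag.
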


\begin{proof}
It is clear that $9/8$-closed groups are $3/2$-closed.  Let ${\cal L} = \{K\wr S_n:K\in{\cal K}{\rm \ and\ }n\ge 2{\rm \ is\ an\ integer}\}$.  It is also clear that ${\cal K}\cap{\cal L} = \emptyset$.  We first show that if $G\in{\cal L}$, then $G$ is $3/2$-closed.

Let $G\in {\cal L}$.  Write $G = K\wr S_n$, where $K\in{\cal K}$.  Let $H\le G$ be transitive with a normal block system ${\cal B}$ such that the ${\cal B}$-fixing block system ${\cal E}$ of $H$ has more than one block.  Let ${\cal C}$ be the lexi-partition of $G$ with respect to $S_n$.  If ${\cal B} = {\cal C}$, then we have the largest subgroup of $G$ with ${\cal B}$ a block system is simply $G$.  As $G = K\wr S_n$, we have that the ${\cal B}$-fixing block system of $G$ is ${\cal B}$, and $\fix_G({\cal B})^B = S_n$.  So we need only consider when ${\cal B}\not = {\cal C}$.

We proceed by contradiction, and so assume that $G = K\wr S_n$ is not $3/2$-closed.   We may assume without loss of generality that $G$ is chosen so that $G = K\wr S_n$ with $n\ge 2$ as small as possible such that $K\wr S_n$ is not $3/2$-closed.  As ${\cal B}$ and ${\cal C}$ are block systems of $G$, there either exist $B\in{\cal B}$ and $C\in{\cal C}$ such that $\vert B\cap C\vert\ge 2$, or $\vert B\cap C\vert\le 1$ for every $B\in{\cal B}$ and $C\in{\cal C}$.  We consider the latter case first, and then reduce the former case to the latter case.

Suppose $\vert B\cap C\vert \le 1$ for every $B\in{\cal B}$ and $C\in{\cal C}$.  Then $H/{\cal C}\le K$ and $\fix_H({\cal B})/{\cal C}\tl H/{\cal C}$.  Additionally, $H/{\cal C}\cong H/\fix_H({\cal C})$, and as $\vert B\cap C\vert\le 1$ for every $B\in{\cal B}$ and $C\in{\cal C}$, we see that $\fix_H({\cal B})\cap\fix_H({\cal C}) = 1$.  Thus $\WStab_H(B)\cap\fix_H({\cal C}) = 1$ for every $B\in{\cal B}$.  Let ${\cal B}'$ be the set of orbits of $\fix_H({\cal B})/{\cal C}$.  As $\fix_H({\cal B})\cap\fix_H({\cal C}) = 1$, ${\cal B}'$ is a normal block system of $K$ whose blocks are not singletons.  As the ${\cal B}$-fixing block system ${\cal E}$ of $H$ has more than one block, ${\cal B}$ has more than one block and so ${\cal B}'$ has more than one block.  Thus ${\cal B}'$ is not trivial.  Let $C\in{\cal C}$ such that $\vert C\cap B\vert = 1$.  As $\WStab_H(B)$ fixes $B$ pointwise, it fixes the point in $C\cap B$, and so fixes $C$.  Thus $\WStab_H(B)$ fixes each block of $C$ with a point in $B$.  Hence $(\WStab_H(B)/{\cal C})^{B/{\cal C}} = 1$.  Let $B'\in{\cal B}$ such that  $\WStab_H(B)$ is transitive on $B'$.  Let $C',C''\in {\cal C}$ such that $\vert B'\cap C'\vert = \vert B'\cap C''\vert = 1$, with $\{x'\} = B'\cap C'$ and $\{x''\} = B'\cap C''$.  Then there exists $g\in \WStab_H(B)$ such that $g(x') = x''$.  Then $g/{\cal C}(C') = C''$ and so $\WStab_H(B)/{\cal C}$ is transitive on $B'/{\cal C}$.  This implies that the ${\cal B}'$-fixing block system of $H/{\cal C}\le K$ has at least two blocks, contradicting the assumption that $K$ is $9/8$-closed.  Thus we may assume that $\vert B\cap C\vert\ge 2$ for some $B\in{\cal B}$ and $C\in{\cal C}$.

Suppose $\vert B\cap C\vert\ge 2$ for some $B\in{\cal B}$ and $C\in{\cal C}$.  Then $B\cap C$ is a block of $H$ with ${\cal D}$ the block system of $H$ that contains $B\cap C$.  As $G = K\wr S_n$ and ${\cal D}\prec{\cal C}$, we see that $G$ contains a subgroup permutation isomorphic to $H/{\cal D}\wr S_D$.  By the minimality of $n$ and that $n\ge 2$, we see that $K\wr S_{n-\vert D\vert}$ is $3/2$-closed.  As ${\cal B}\not = {\cal C}$, repeating the previous argument, if necessary, we see there is $m\le n$ and block system ${\cal F}\prec{\cal B}$ (and ${\cal F}\prec{\cal C}$) such that $K\wr S_{n-m}$ is $3/2$-closed and $\vert (B/{\cal F})\cap (C/{\cal F})\vert\le 1$ for every $B\in{\cal B}$ and $C\in{\cal C}$.  By arguments in the preceding paragraph we see that $K$ is not $9/8$-closed, a contradiction.  Hence $K\wr S_n$ is $3/2$-closed.

Let $G$ be $3/2$-closed.  As a $9/8$-closed groups is $3/2$-closed, we may assume that $G$ is not $9/8$-closed.  Then there exists a transitive subgroup $H\le G$ with a normal block system ${\cal B}$ for which the ${\cal B}$-fixer block system ${\cal E}$ is not $\{\Z_n\}$.  As $G$ is $3/2$-closed, we see that ${\cal E} = {\cal B}$ and $\fix_H({\cal B})^B = S_B$ for every $B\in{\cal B}$.  We may then write $H = (H/{\cal B})\wr S_k$, where $k = \vert B\vert$, $B\in{\cal B}$.  By Lemma \ref{maximal symmetric}, there is a block system ${\cal C}$ of $G$ such that $G\cong G/{\cal C}\wr S_m$ for some $m\vert n$.  The result now follows by Lemma \ref{quotient 9/8 closed}.
\end{proof}

\begin{definition}
Let $G\le S_n$ be a transitive group.  By the $3/2$-closure of $G$, denoted $G^{(3/2)}$, we mean the intersection of all $3/2$-closed groups that contain $G$.
\end{definition}

It is easy to see that the $3/2$-closure of $G$ is $3/2$-closed.

\begin{lem}\label{3/2 closure}
Let $G\le S_n$ be a transitive group.  Then $G^{(3/2)}$ is $3/2$-closed.
\end{lem}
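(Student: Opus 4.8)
The plan is to check directly that $G^{(3/2)}$ meets the three requirements in the definition of a $3/2$-closed group, exploiting that $G^{(3/2)}$ is contained in every $3/2$-closed group containing $G$ and that this family of overgroups is nonempty: for instance $S_n$ is $3/2$-closed, since $S_n=\Aut(\bar{K}_n)$ is $5/2$-closed, while for any nontrivial block system ${\cal B}$ with blocks of size $b$ the largest subgroup of $S_n$ having ${\cal B}$ as a block system is the wreath product $S_b\wr S_{n/b}$ of symmetric groups, whose ${\cal B}$-fixer block system is ${\cal B}$ and which induces $S_B$ on each block $B$. That $G^{(3/2)}$ is $5/2$-closed is then automatic: the defining property of a $5/2$-closed group asserts that for every triple $(H,{\cal B}_H,g)$ consisting of a transitive subgroup, a normal block system of it, and an element fixing the appropriate blocks, the restriction $g\vert_E$ lies in the group, and both the hypothesis on the triple and the element $g\vert_E$ depend only on $(H,{\cal B}_H,g)$ and not on the ambient group; hence an intersection of $5/2$-closed groups all containing the transitive group $G$ is again $5/2$-closed. (The same remark, together with the fact that transitive subgroups of $9/8$-closed groups are $9/8$-closed, settles the case in which some $9/8$-closed group contains $G$, but the argument below needs no case split on this.)

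Two ingredients then handle the $5/4$- and $3/2$-conditions. First, a monotonicity statement: if $L\le L'$ are transitive with a common block system ${\cal B}$, then $\WStab_L(B)\le\WStab_{L'}(B)$ for every $B\in{\cal B}$, because $\WStab_L(B)$, viewed inside $\fix_{L'}({\cal B})$, already has the maximality property defining $\WStab_{L'}(B)$; consequently, for distinct $B,B'\in{\cal B}$, transitivity of $\WStab_L(B)^{B'}$ forces transitivity of $\WStab_{L'}(B)^{B'}$, so the $\equiv$-classes of $L'$ refine those of $L$ and the ${\cal B}$-fixer block system of $L'$ refines that of $L$. Second, a ``full symmetric base'' statement: if $G'$ is $3/2$-closed, ${\cal B}$ is a block system of a transitive subgroup of $G'$, $K'$ is the largest subgroup of $G'$ having ${\cal B}$ as a block system, and the ${\cal B}$-fixer block system of $K'$ is ${\cal B}$ itself, then $\fix_{K'}({\cal B})=\prod_{B\in{\cal B}}S_B$. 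Here $3/2$-closedness gives $\fix_{K'}({\cal B})^B=S_B$ for each $B$, and $5/2$-closedness of $G'$ upgrades this to the full product: an element $g\in\fix_{K'}({\cal B})$ inducing a prescribed permutation on one block $B_0$ and fixing every block of ${\cal B}$ setwise restricts, via $g\mapsto g\vert_{B_0}$, to an element of $G'$ that still preserves ${\cal B}$ and fixes all blocks setwise, hence lies in $\fix_{K'}({\cal B})$ with support contained in $B_0$; letting $B_0$ and the permutation vary shows $\fix_{K'}({\cal B})$ contains, so equals, $\prod_B S_B$. Conversely, when $\fix_{K'}({\cal B})=\prod_B S_B$ one computes $\WStab_{K'}(B)=\prod_{B'\ne B}S_{B'}$, and these subgroups are pairwise distinct, so the ${\cal B}$-fixer block system of $K'$ is $\nobreak{\cal B}$.

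To conclude, let $H\le G^{(3/2)}$ be transitive with a (nontrivial) normal block system ${\cal B}$, let $K$ be the largest subgroup of $G^{(3/2)}$ having ${\cal B}$ as a block system, and for each $3/2$-closed overgroup $G'$ of $G$ let $K_{G'}$ be the largest subgroup of $G'$ having ${\cal B}$ as a block system; then $K=\bigcap_{G'}K_{G'}$ and $\fix_K({\cal B})=\bigcap_{G'}\fix_{K_{G'}}({\cal B})$. Since each $G'$ is $5/4$-closed, the ${\cal B}$-fixer block system of $K_{G'}$ is either ${\cal B}$ or $\{\Z_n\}$. If it is $\{\Z_n\}$ for some $G'$, then by monotonicity the ${\cal B}$-fixer block system of the smaller group $K$ is a coarsening of $\{\Z_n\}$, hence equals $\{\Z_n\}$, and the $3/2$-condition is vacuous for this ${\cal B}$. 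Otherwise it is ${\cal B}$ for every $G'$, so by the full symmetric base statement $\fix_{K_{G'}}({\cal B})=\prod_B S_B$ for every $G'$; intersecting, $\fix_K({\cal B})=\prod_B S_B$, whence the ${\cal B}$-fixer block system of $K$ is ${\cal B}$ and $\fix_K({\cal B})^B=S_B$ for all $B$. In both cases the $5/4$- and $3/2$-conditions hold for ${\cal B}$; as $H$ and ${\cal B}$ were arbitrary, $G^{(3/2)}$ is $5/4$-closed and satisfies the $3/2$-condition, and with the first paragraph it is $3/2$-closed.

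The step I expect to be the main obstacle is the full symmetric base statement, and specifically the rigorous invocation of $5/2$-closedness of $G'$. The subtlety is bookkeeping about block systems: $\WStab$, the ${\cal B}$-fixer block system, and the $5/2$-closure property are formulated for normal block systems, whereas ${\cal B}$, though normal for $H$, need not be a normal block system of the larger group $K_{G'}$ (its fixer may act intransitively on the blocks); so one must either verify that these notions and the closure property apply in the generality needed, or rerun the argument so that the closure property is only ever used with $H$ itself (using monotonicity to relate the ${\cal B}$-fixer block systems of $H$ and $K_{G'}$). Once that point is handled, the remaining bookkeeping --- that $g\vert_{B_0}$ genuinely returns into $\fix_{K_{G'}}({\cal B})$ --- is routine.
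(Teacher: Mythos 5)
Your proposal is correct and follows the same skeleton as the paper's proof: fix a transitive $H\le G^{(3/2)}$ with block system ${\cal B}$, pass to the largest subgroup $K_{G'}$ of each $3/2$-closed overgroup $G'$ having ${\cal B}$ as a block system, invoke the $5/4$- and $3/2$-conditions there, and transfer the conclusion to the largest such subgroup of the intersection. The substantive difference is that you interpolate the ``full symmetric base'' lemma, upgrading $\fix_{K_{G'}}({\cal B})^B = S_B$ to $\fix_{K_{G'}}({\cal B}) = \prod_{B}S_B$ via the $5/2$-closure restriction property. This is exactly the point at which the paper's proof is terse: it concludes from ``$L$ is arbitrary'' that $\fix_M({\cal B})^B=S_B$ for the largest subgroup $M$ of $G^{(3/2)}$, but the condition of inducing $S_B$ on each block is not in general preserved under intersection of groups, whereas the condition of \emph{equalling} $\prod_B S_B$ is; your extra step supplies the justification the paper leaves implicit. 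You also verify two things the paper's proof does not mention: that the intersection is over a nonempty family ($S_n$ is $3/2$-closed) and that $G^{(3/2)}$ inherits $5/2$-closedness (required since $3/2$-closed groups are by definition $5/2$-closed). Finally, the worry you flag at the end --- whether ${\cal B}$ is a normal block system of $K_{G'}$ so that $\WStab$ and the $5/2$-restriction property legitimately apply --- resolves itself within your own argument: once the $3/2$-condition gives $\fix_{K_{G'}}({\cal B})^B=S_B$, the blocks of ${\cal B}$ are precisely the orbits of the normal subgroup $\fix_{K_{G'}}({\cal B})\tl K_{G'}$, so ${\cal B}$ is normal in $K_{G'}$ and the bookkeeping closes.
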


\begin{proof}
Let $H\le G^{(3/2)}$ be transitive with a block system ${\cal B}$.  Let $L$ be a $3/2$-closed group that contains $G$ and $K\le L$ be the largest subgroup of $L$ that has ${\cal B}$ as a block system.  Then $H\le K$.  Also, if $\WStab_H({\cal B})\not = 1$, then the ${\cal B}$-fixing block system of $H$ is ${\cal B}$.  As $H\le K\le L$, the ${\cal B}$-fixing block system of $K$ is ${\cal B}$, and $\fix_K({\cal B})^B = S_B$ for every $B\in{\cal B}$.  Let $M$ be the largest subgroup of $G^{(3/2)}$ that has ${\cal B}$ as a block system.  As $L$ is arbitrary, the ${\cal B}$-fixing block system of $M$ is ${\cal B}$ and $\fix_M({\cal B})^B = S_B$ for every $B\in{\cal B}$.  Thus $G^{(3/2)}$ is $3/2$-closed.
\end{proof}

\begin{thrm}\label{Toida generalization}
Let $x,y\in S_n$ such that $\la x\ra$ and $\la y\ra$ are regular cyclic subgroups.  Then $\la x\ra$ and $\la y\ra$ are conjugate in $G = \la x,y\ra^{(3/2)}$.  Consequently, every CI-object of $\Z_n$ with $3/2$-closed automorphism group is a CI-object of $\Z_n$.  In particular, every unit circulant digraph of order $n$ is a CI-digraph of $\Z_n$.
\end{thrm}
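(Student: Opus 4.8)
The plan is to run $G=\la x,y\ra^{(3/2)}$ through the structure theorem for $3/2$-closed groups and reduce everything to the $9/8$-closed case, which is Theorem \ref{configuration result}. By Lemma \ref{3/2 closure}, $G$ is $3/2$-closed, and it is transitive since it contains the regular subgroup $\la x\ra$; so by Theorem \ref{3/2 structure}, either $G$ is $9/8$-closed or $G$ is permutation isomorphic to $K\wr S_m$ for a transitive $9/8$-closed group $K$ and an integer $m\ge 2$. The main tool in both cases is the following remark, which I would record first: \emph{if $M$ is a transitive $9/8$-closed group and $U,W\le M$ are regular cyclic subgroups, then $U$ and $W$ are conjugate in $\la U,W\ra\le M$.} To prove it, pick generators $u$ of $U$ and $w$ of $W$; these are $n$-cycles (with $n$ the common degree), hence conjugate in $S_n$, and for every conjugate $w'$ of $w$ in $\la u,w\ra$ the group $\la u,w'\ra$ is a transitive subgroup of $M$, hence $9/8$-closed, and since it contains the regular cyclic $\la u\ra$ all of its block systems are normal and therefore have fixing block system $\{\Z_n\}$; so the hypothesis of Theorem \ref{configuration result} holds and $U$ and $W$ are conjugate in $\la u,w\ra$.

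If $G$ is itself $9/8$-closed the theorem is immediate from the remark with $M=G$. So suppose $G=K\wr S_m$ and let ${\cal B}$ be its lexi-partition, so ${\cal B}$ is a block system of $G$ with blocks of size $m$, $G/{\cal B}=K$, and $\fix_G({\cal B})=\prod_{B\in{\cal B}}S_B$. As ${\cal B}$ is a block system of the regular cyclic groups $\la x\ra$ and $\la y\ra$, their images $\la x\ra/{\cal B}$ and $\la y\ra/{\cal B}$ are regular cyclic subgroups of $K$; by the remark applied to $K$ they are conjugate in $K$, and lifting a conjugating element to $G$ and replacing $\la x\ra$ by a $G$-conjugate (which does not change what must be proved) I may assume $\la x\ra/{\cal B}=\la y\ra/{\cal B}$. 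It then remains to conjugate $\la x\ra$ to $\la y\ra$ by an element of $\fix_G({\cal B})$.

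For this last step I would relabel $\Z_n$ as $\Z_{n/m}\times\{0,\ldots,m-1\}$ with blocks $B_i=\{i\}\times\{0,\ldots,m-1\}$, chosen so that $\la x\ra/{\cal B}=\la y\ra/{\cal B}$ is generated by the cycle $\bar r\colon i\mapsto i+1$. Pick generators $r_1$ of $\la x\ra$ and $r_2$ of $\la y\ra$ with $r_1/{\cal B}=r_2/{\cal B}=\bar r$ (possible because $\Z_n^*$ maps onto $(\Z_{n/m})^*$), and write $r_1(i,v)=(i+1,\sigma_i(v))$ and $r_2(i,v)=(i+1,\rho_i(v))$ with $\sigma_i,\rho_i\in S_m$. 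Conjugating $r_1$ by $\tau=(1;\tau_0,\ldots,\tau_{n/m-1})\in\fix_G({\cal B})$ turns $\sigma_i$ into $\tau_{i+1}^{-1}\sigma_i\tau_i$, so I want to solve $\tau_{i+1}\rho_i=\sigma_i\tau_i$ for $i\in\Z_{n/m}$; propagating these relations from $\tau_0$ around the cycle, the only consistency requirement is $\tau_0^{-1}(\sigma_{n/m-1}\cdots\sigma_0)\tau_0=\rho_{n/m-1}\cdots\rho_0$. Now $\sigma_{n/m-1}\cdots\sigma_0$ and $\rho_{n/m-1}\cdots\rho_0$ are the restrictions to $B_0$ of $r_1^{n/m}$ and $r_2^{n/m}$; since $\la x\ra\cap\fix_G({\cal B})$ and $\la y\ra\cap\fix_G({\cal B})$ are semiregular of order $m$ they act regularly on $B_0$, so these restrictions are $m$-cycles, hence conjugate in $S_m$, so a suitable $\tau_0$ exists. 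This produces $\tau\in\fix_G({\cal B})$ with $\tau^{-1}r_1\tau=r_2$, so $\tau^{-1}\la x\ra\tau=\la y\ra$ and the first statement follows. I expect this cycle-patching computation to be the main obstacle; the rest is bookkeeping with the quoted results.

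For the consequences, let $X$ be a Cayley object of $\Z_n$ with $\Aut(X)$ being $3/2$-closed; write $(\Z_n)_L=\la x\ra$ and take $\phi\in S_n$ with $\phi^{-1}\la x\ra\phi\le\Aut(X)$, say $\phi^{-1}x\phi=y$. Then $\la x,y\ra\le\Aut(X)$, hence $\la x,y\ra^{(3/2)}\le\Aut(X)$ since $\Aut(X)$ is one of the $3/2$-closed groups containing $\la x,y\ra$, and by the first part $\la x\ra$ and $\phi^{-1}\la x\ra\phi=\la y\ra$ are conjugate in $\Aut(X)$; by Lemma \ref{CI}, $X$ is a CI-object of $\Z_n$. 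Finally, a unit circulant digraph of order $n$ is a Cayley digraph of $\Z_n$ whose automorphism group is $3/2$-closed by Theorem \ref{unit circ auto}, and so is a CI-digraph of $\Z_n$.
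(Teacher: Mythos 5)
Your proposal is correct and follows essentially the same route as the paper: reduce via Theorem \ref{3/2 structure} to either the $9/8$-closed case (handled by Theorem \ref{configuration result}) or $G=K\wr S_m$, align the quotients modulo the lexi-partition ${\cal B}$ using the $9/8$-closed case for $K=G/{\cal B}$, and then conjugate inside $\fix_G({\cal B})$. The only difference is that your final step --- the explicit cycle-patching construction of $\tau\in\fix_G({\cal B})$ using that the restrictions of $x^{n/m}$ and $y^{n/m}$ to a block are $m$-cycles --- is carried out in more detail than the paper's corresponding (rather terse) appeal to self-centralizing regular abelian groups and $\fix_K({\cal B})^B=S_B$.
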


\begin{proof}
By Theorem \ref{configuration result}, we may assume that $G$ is not $9/8$-closed, in which case by Theorem \ref{3/2 structure} we see that $G = K\wr S_m$ for some $9/8$-closed group $K$ and $m\vert n$.  Let ${\cal B}$ be the block system of $G$ with blocks of size $m$.  By Theorem \ref{configuration result} there is $\delta\in G$ such that $\la\delta^{-1}y\delta\ra/{\cal B} = \la x\ra/{\cal B}$.  We may thus assume without loss of generality that $\la y\ra/{\cal B} = \la x\ra/{\cal B}$.  As for every $a\in\Z_n^*$ we have $\la y\ra$ is conjugate to $\la x\ra$ if and only if $\la y\ra$ is conjugate to $\la x^a\ra = \la x\ra$, we may assume without loss of generality that $y/{\cal B} = x/{\cal B}$.  We may then assume that $\delta/{\cal B}\in\la x\ra/{\cal B}$ as a regular abelian group is self-centralizing \cite[Corollary 2.2.18]{Book}, in which case we may assume without loss of generality that $\delta/{\cal B} = 1$.  As the ${\cal B}$-fixing block system of $H$ is ${\cal E}$ and $\fix_K({\cal B})^B = S_B$, we have that $\delta\in H$, and so $\la x\ra$ and $\la y\ra$ are conjugate in $G$.

Now let $X$ be a Cayley object of $\Z_n$ in some class ${\cal K}$ of combinatorial objects such that $\Aut(X)$ has a $3/2$-closed automorphism group.  By the first part of this result, if $\phi\in S_n$ such that $\phi^{-1}(\Z_n)_L\phi\le\Aut(X)$, then there exists $\delta\in\Aut(X)$ such that $\delta^{-1}\phi^{-1}(\Z_n)_L\phi\delta = (\Z_n)_L$.  Hence by Lemma \ref{CI}, we have that $X$ is a CI-object of $\Z_n$.  In particular, by Theorem \ref{unit circ auto} the automorphism group of a unit circulant digraph is $3/2$-closed.  Thus every unit circulant digraph is a CI-digraph of $\Z_n$ and the result follows.
\end{proof}

With the idea of $3/2$-closed groups, we can define ``unit" Cayley digraphs of groups other than the cyclic group, as well as double coset digraphs.

\begin{definition}
Let $G$ be a group, $H\le G$, and $S\subset G$ such that $HSH = S$.  We say that the double coset digraph $\Cos(G,H,S)$ is a {\bf unit coset digraph of $G$} if $\Aut(\Cos(G,S))$ is $3/2$-closed.
\end{definition}

\begin{problem}\label{unit CI}
For which groups $G$ is it true that every unit Cayley digraph of $G$ is a CI-digraph of $G$?
\end{problem}

One may ask why, in the previous problem, we did not generalize the notion of a CI-digraph to double coset digraphs, and pose Problem \ref{unit CI} in that context.  The reason for this is that it was shown in \cite{BarberDpreprint} that the isomorphism problem for coset digraphs of $G$ is equivalent to the isomorphism problem for Cayley digraphs of $G$.  There is thus no need for the more general form.

\providecommand{\bysame}{\leavevmode\hbox to3em{\hrulefill}\thinspace}
\providecommand{\MR}{\relax\ifhmode\unskip\space\fi MR }
% \MRhref is called by the amsart/book/proc definition of \MR.
\providecommand{\MRhref}[2]{%
  \href{http://www.ams.org/mathscinet-getitem?mr=#1}{#2}
}
\providecommand{\href}[2]{#2}

%\bibliography{References}{}

\begin{thebibliography}{10}

\bibitem{Adam1967}
A.~\'Ad\'am, \emph{Research problem 2-10}, J. Combin. Theory \textbf{2} (1967),
  393.

\bibitem{Babai1977}
L.~Babai, \emph{Isomorphism problem for a class of point-symmetric structures},
  Acta Math. Acad. Sci. Hungar. \textbf{29} (1977), no.~3-4, 329--336.
  \MR{MR0485447 (58 \#5281)}

\bibitem{BarberDpreprint}
Rachel Barber and Ted Dobson, \emph{Finding automorphism groups of double coset
  graphs and cayley graphs are equivalent}, Preprint, 2021.

\bibitem{DixonM1996}
John~D. Dixon and Brian Mortimer, \emph{Permutation groups}, Graduate Texts in
  Mathematics, vol. 163, Springer-Verlag, New York, 1996. \MR{MR1409812
  (98m:20003)}

\bibitem{Dobson2008a}
Edward Dobson, \emph{On isomorphisms of circulant digraphs of bounded degree},
  Discrete Math. \textbf{308} (2008), no.~24, 6047--6055. \MR{MR2464896}

\bibitem{DobsonM2002}
Edward Dobson and Joy Morris, \emph{Toida's conjecture is true}, Electron. J.
  Combin. \textbf{9} (2002), no.~1, Research Paper 35, 14 pp. (electronic).
  \MR{MR1928787 (2003h:05096)}

\bibitem{Dobson2023epreprint}
Ted Dobson, \emph{Automorphism groups of circulant digraphs of odd prime power
  order, in preparation.}

\bibitem{Dobson2023apreprint}
\bysame, \emph{Towards inductive proofs in algebraic combinatorics}.

\bibitem{Book}
Ted Dobson, Aleksander Malni\v{c}, and Dragan Maru\v{s}i\v{c}, \emph{Symmetry
  in graphs}, Cambridge Studies in Advanced Mathematics, vol. 198, Cambridge
  University Press, Cambridge, 2022. \MR{4404766}

\bibitem{DobsonMS2022}
Ted Dobson, Mikhail Muzychuk, and Pablo Spiga, \emph{Generalised dihedral
  {CI}-groups}, Ars Math. Contemp. \textbf{22} (2022), no.~2, Paper No. 7, 18.
  \MR{4449170}

\bibitem{ElspasT1970}
Bernard Elspas and James Turner, \emph{Graphs with circulant adjacency
  matrices}, J. Combinatorial Theory \textbf{9} (1970), 297--307. \MR{MR0272659
  (42 \#7540)}

\bibitem{Grunbaum2009}
Branko Gr{\"u}nbaum, \emph{Configurations of points and lines}, Graduate
  Studies in Mathematics, vol. 103, American Mathematical Society, Providence,
  RI, 2009. \MR{2510707}

\bibitem{Harary1959}
Frank Harary, \emph{On the group of the composition of two graphs}, Duke Math.
  J \textbf{26} (1959), 29--34. \MR{0110648 (22 \#1523)}

\bibitem{Hungerford1980}
Thomas~W. Hungerford, \emph{Algebra}, Graduate Texts in Mathematics, vol.~73,
  Springer-Verlag, New York, 1980, Reprint of the 1974 original. \MR{MR600654
  (82a:00006)}

\bibitem{KellyN1973}
L.~M. Kelly and S.~Nwankpa, \emph{Affine embeddings of {S}ylvester-{G}allai
  designs}, J. Combinatorial Theory Ser. A \textbf{14} (1973), 422--438.
  \MR{314656}

\bibitem{KoikeKMM2019}
Hiroki Koike, Istv\'{a}n Kov\'{a}cs, Dragan Maru\v{s}i\v{c}, and Mikhail
  Muzychuk, \emph{Cyclic groups are {CI}-groups for balanced configurations},
  Des. Codes Cryptogr. \textbf{87} (2019), no.~6, 1227--1235. \MR{3947344}

\bibitem{KotlovL1996}
Andrew Kotlov and L\'{a}szl\'{o} Lov\'{a}sz, \emph{The rank and size of
  graphs}, J. Graph Theory \textbf{23} (1996), no.~2, 185--189. \MR{1408346}

\bibitem{Li1999}
Cai~Heng Li, \emph{A complete classification of finite homogeneous groups},
  Bull. Austral. Math. Soc. \textbf{60} (1999), no.~2, 331--334. \MR{1715229}

\bibitem{Li2002}
\bysame, \emph{On isomorphisms of finite {C}ayley graphs---a survey}, Discrete
  Math. \textbf{256} (2002), no.~1-2, 301--334. \MR{MR1927074 (2003i:05067)}

\bibitem{Muzychuk1999}
Mikhail Muzychuk, \emph{On the isomorphism problem for cyclic combinatorial
  objects}, Discrete Math. \textbf{197/198} (1999), 589--606, 16th British
  Combinatorial Conference (London, 1997). \MR{MR1674890 (2000e:05165)}

\bibitem{MuzychukKP2001}
Mikhail Muzychuk, Mikhail Klin, and Reinhard P{\"o}schel, \emph{The isomorphism
  problem for circulant graphs via {S}chur ring theory}, Codes and association
  schemes ({P}iscataway, {NJ}, 1999), DIMACS Ser. Discrete Math. Theoret.
  Comput. Sci., vol.~56, Amer. Math. Soc., Providence, RI, 2001, pp.~241--264.
  \MR{MR1816402 (2002g:05128)}

\bibitem{PisanskiS2013}
Toma\v{z} Pisanski and Brigitte Servatius, \emph{Configurations from a
  graphical viewpoint}, Birkh\"{a}user Advanced Texts: Basler Lehrb\"{u}cher.
  [Birkh\"{a}user Advanced Texts: Basel Textbooks], Birkh\"{a}user/Springer,
  New York, 2013. \MR{2978043}

\bibitem{Sabidussi1958}
Gert Sabidussi, \emph{On a class of fixed-point-free graphs}, Proc. Amer. Math.
  Soc. \textbf{9} (1958), 800--804. \MR{MR0097068 (20 \#3548)}

\bibitem{Sabidussi1959}
\bysame, \emph{The composition of graphs}, Duke Math. J \textbf{26} (1959),
  693--696. \MR{MR0110649 (22 \#1524)}

\bibitem{Sabidussi1964}
\bysame, \emph{Vertex-transitive graphs}, Monatsh. Math. \textbf{68} (1964),
  426--438. \MR{MR0175815 (31 \#91)}

\bibitem{Toida1977}
Shunichi Toida, \emph{A note on \'{A}d\'am's conjecture}, J. Combinatorial
  Theory Ser. B \textbf{23} (1977), no.~2-3, 239--246. \MR{0463014 (57 \#2978)}

\bibitem{Wielandt1969}
H.~Wielandt, \emph{Permutation groups through invariant relations and invariant
  functions}, lectures given at The Ohio State University, Columbus, Ohio,
  1969.

\bibitem{Wilson2003}
Steve Wilson, \emph{A worthy family of semisymmetric graphs}, Discrete Math.
  \textbf{271} (2003), no.~1-3, 283--294. \MR{1999550}

\end{thebibliography}
%\bibliographystyle{amsplain}

\end{document}